\newtheorem{theorem}{Theorem}[section]
\newtheorem{lemma}[theorem]{Lemma}
\newtheorem{assumption}{Assumption}
\theoremstyle{definition}
\newtheorem{definition}[theorem]{Definition}
\newtheorem{example}[theorem]{Example}
\newtheorem{proposition}[theorem]{Proposition}
\theoremstyle{remark}
\newtheorem{remark}[theorem]{Remark}
\numberwithin{equation}{section}
\newcommand{\R}{\mathbb{R}}
\newcommand{\N}{\mathbb{N}}
\newcommand{\X}{\mathbb{X}}
\newcommand{\Y}{\mathbb{Y}}
\newcommand{\dom}{\mathrm{dom}}
\newcommand{\ewmin}{\lambda_{\min}}
\newcommand{\cA}{\mathcal{A}}
\newcommand{\cC}{\mathcal{C}}
\newcommand{\cD}{\mathcal{D}}
\newcommand{\cF}{\mathcal{F}}
\newcommand{\cH}{\mathcal{H}}
\newcommand{\cI}{\mathcal{I}}
\newcommand{\cM}{\mathcal{M}}
\newcommand{\cL}{\mathcal{L}}
\newcommand{\cT}{\mathcal{T}}
\newcommand{\sL}{\mathscr{L}}
\newcommand{{\KL}  }{K\L{}}
\newcommand{\prox}{\mathrm{prox}}
\newcommand{\dist}{\mathrm{dist}}
\newcommand{\crit}{\mathrm{crit}}
\newcommand{\sign}{\mathrm{sgn}}
\newcommand{\Exp}{\mathbb{E}}
\newcommand{\Sto}{\widetilde{\nabla}}
\newcommand{\iprod}[2]{\langle #1, #2 \rangle}
\DeclareMathOperator*{\argmin}{arg\,min}
\newcommand{\be}{\begin{equation}}
\newcommand{\ee}{\end{equation}}
\newcommand{\bee}{\begin{equation*}}
\newcommand{\eee}{\end{equation*}}
\newcommand{\mycomment}[1]{}
\title{\textbf{Preconditioned  Primal-Dual Gradient Methods for Nonconvex Composite and Finite-Sum Optimization}}
\author[1]{Jiahong Guo}
\author[2]{Xiao Wang}
\author[1]{Xiantao Xiao}
\affil[1]{School of Mathematical Sciences, Dalian University of Technology, Dalian 116024, China}
\affil[2]{Peng Cheng Laboratory, Shenzhen 518066, China}
\begin{document}
\maketitle
\begin{abstract}
In this paper, we first introduce a preconditioned  primal-dual gradient algorithm based on conjugate duality theory. This algorithm is designed to solve  composite optimization problem
 whose objective function consists of two summands:  a continuously differentiable nonconvex function and the composition of a nonsmooth nonconvex function with a linear operator. In contrast to existing nonconvex primal-dual algorithms, our proposed algorithm, through the utilization of conjugate duality,  does not require the calculation of  proximal mapping of nonconvex functions. Under mild conditions, we prove that any cluster point of the generated sequence is a critical point of the composite optimization problem. In the context of Kurdyka-\L{}ojasiewicz property, we establish global convergence and convergence rates for the iterates.
Secondly, for nonconvex finite-sum optimization,  we propose a stochastic algorithm that combines  the preconditioned primal-dual gradient algorithm with a class of variance reduced stochastic gradient estimators. Almost sure global convergence and expected convergence rates are derived relying on the  Kurdyka-\L{}ojasiewicz inequality.
Finally, some preliminary numerical results are presented to demonstrate the effectiveness of the proposed algorithms.

\end{abstract}
%\subjclass[2010]{Primary 49M15, 65C60, 65K05,  90C06}
{\bf Keywords:} Nonconvex first-order primal-dual algorithms; Kurdyka-\L{}ojasiewicz inequality; global convergence; convergence rates; stochastic approximation.\\
%---------------------------------------------------

%    Text of article.

%-----------------------------------------------

\section{Introduction}\label{sec:intro}
In this paper, we first consider the following  composite optimization problem:
\be\label{eq:p}
\begin{aligned}
\min_{x\in \X }\ f(x)+h(Ax),
\end{aligned}
\ee
where $\X$ and $\Y$ are finite-dimensional vector spaces, $f:\X \to\R$ is a continuously differentiable and possibly nonconvex function, $A:\X \to\Y $ is a  linear operator and $h:\Y \to(-\infty, +\infty]$ is a simple and possibly nonsmooth, nonconvex function.  Problem (\ref{eq:p}) arises in a variety of practical applications from machine learning, statistics, image processing, and so on. In many applications, function $h$ is usually referred to the \textit{regularizer} which is used to guarantee certain regular properties of the solution. Recently,  nonconvex regularizers, such as $\ell_0$, $\ell_p$ ($0<p<1$), smoothly clipped absolute deviation (SCAD) and minimax concave penalty (MCP), have drawn a lot of attention and been witnessed to achieve significant improvement over convex regularizers, see \cite{WCLQ2018} and references therein.

For problem (\ref{eq:p}) in the fully nonconvex setting (both $f$ and $h$ are nonconvex),  there has been an intensive renewed interest in the convergence analysis of various algorithms based on the Kurdyka-\L{}ojasiewicz ({\KL}) property in recent years. Attouch et al. \cite{ABS2013} established the global convergence of a forward-backward splitting algorithm for (\ref{eq:p}) with $A$ being the identity operator and $(f+h)$ being a {\KL}  function. Li and Pong \cite{Li2015} demonstrated the convergence of  an alternating direction method of multipliers (ADMM) under assumptions that both $f$ and $h$ are semialgebraic, and $A$ is surjective.  A nonmonotone linesearch algorithm based on the forward-backward envelope was proposed in \cite{TSP2018} and  shown to own superlinear convergence rates. Geiping and Moeller \cite{GM2018} investigated a class of majorization-minimization methods for (\ref{eq:p}) with a nonlinear operator $A$, and derived the global convergence under the {\KL}  property and the uniqueness of R-minimizing solutions.  In \cite{BST2018},  the authors employed  a Lyapunov method to established the global convergence of a bounded sequence to a critical point  for several Lagrangian-based methods, including proximal multipliers method and proximal ADMM, within the semialgebraic setting. By assuming that the associated augmented Lagrangian possesses the {\KL}  property, Bo{\c{t}} and Nguyen  \cite{BN2020} proved that the iterates of proximal ADMM converge to a Karush-Kuhn-Tucker point. They also derived convergence rates for both the augmented Lagrangian and the iterates. The algorithms for problem (\ref{eq:p}) with $h$ being $\ell_0$ norm were reviewed in the survey paper \cite{TBLS2021}.  For  problem (\ref{eq:p}) with  convex $h$, there exists a vast literature on various nonconvex composite optimization algorithms, see, for instance, \cite{Nesterov2013,BNO2016,BLPPR2017,BPR2020}.

Motivated by a class of well-studied primal-dual hybrid gradient (PDHG) algorithms for convex optimization  \cite{CP2011,PC2011,LXY2021}, and drawing upon the conjugate duality theory for nonconvex optimization, we propose a preconditioned first-order primal-dual algorithm  for solving  nonconvex composite optimization problem (\ref{eq:p}). In most of the aforementioned related algorithms, it is required to compute the elements of the generalized proximal (set-valued) mapping for nonconvex function $h$ and/or nonconvex function $f$ at each iteration. In contrast, at each iteration of our proposed algorithm, we only need to calculate the proximal mapping of the conjugate function $h^*$ which is always convex and lower semicontinuous. This fact makes the proposed algorithm much easier to implement in many scenarios.

In the second part of this paper, we consider to extend the proposed algorithm for
 the following finite-sum optimization problem:
\be\label{eq:p-finite-sum}
\begin{aligned}
	\min_{x\in \X}\ \frac{1}{N}\sum_{i=1}^N f_i(x)+h(Ax),
\end{aligned}
\ee
where $f_i:\X \to\R, i=1,\cdots,N$ are  continuously differentiable and possibly nonconvex,  and $h(Ax)$ is the same as in (\ref{eq:p}). Problem (\ref{eq:p-finite-sum}) arises frequently in the fields of statistics \cite{HTF2009} and machine learning \cite{BCN2018}. In many applications, problem (\ref{eq:p-finite-sum}) is also called as \emph{regularized empirical risk minimization} and the component functions $f_i, i=1,\ldots,N$ correspond to certain loss models. Moreover, in various interesting problems such as deep learning, dictionary learning and classification with nonconvex activation functions, the loss functions $f_i$ exhibit nonconvexity. Since the number of components $N$ (usually represents the size of a dataset) can be extremely large, the  exact computation of the full gradient $\frac{1}{N}\sum_{i=1}^N \nabla f_i(x)$ becomes prohibitively expensive in practice. Consequently, the stochastic approximation techniques have gained increasing importance in designing efficient numerical  algorithms for problem (\ref{eq:p-finite-sum}), see \cite{Lan2020} for example. In particular, the success of many popular variance reduced stochastic algorithms for convex finite-sum optimization has been witnessed in recent years, such as SAG \cite{SLB2017}, SAGA \cite{DBL2014}, SVRG \cite{JOhnson2013} and SARAH \cite{NLST2017}.

For problem (\ref{eq:p-finite-sum}) with nonconvex $f_i$ and convex $h$, a large amount of algorithms have been developed over the past few years. We only name a few here. Li and Li \cite{LL2018}  introduced a stochastic proximal gradient algorithm based on variance reduction, and established a global linear convergence rate for nonconvex $f_i$ satisfying Polyak-\L{}ojasiewicz condition. Nhan et al. \cite{PNPT2020} presented a stochastic first-order algorithm by combining a proximal gradient step with the SARAH estimator, and analyzed the complexity bounds in terms of stochastic first-order oracle calls. Fort and Moulines \cite{FM2022} introduced a stochastic variable metric proximal gradient algorithm by using a mini-batch strategy with variance reduction called SPIDER \cite{FLLZ2018}. In  \cite{WWY2019}, a generic algorithmic framework for stochastic proximal quasi-Newton methods was  introduced. Milzarek et al. \cite{MXCWU2019,MXWU2022} proposed a stochastic semismooth Newton method  for nonsmooth nonconvex stochastic and finite-sum optimization, and established the almost sure global convergence as well as local convergence rates with high probability. Jin and Wang \cite{JW2022} studied a single-loop stochastic primal-dual method for problem (\ref{eq:p-finite-sum}) coupled with a large number of nonconvex functional constraints.

We next review the stochastic approximation algorithms for problem (\ref{eq:p-finite-sum}) in the fully nonconvex setting, where $f_i, i=1,\ldots,N$ and $h$ are nonconvex. Xu et al. \cite{XJY2019} showed that the stochastic proximal gradient methods for problem (\ref{eq:p-finite-sum}) with nonconvex $h$ enjoy the same complexities as their counterparts for convex regularized problem to find an approximate stationary point. Cheng et al. \cite{CWC2022} proposed an interior stochastic gradient method for bounded constrained optimization problems where the objective function is the sum of an expectation function and a nonconvex  $\ell_p$ regularizer.
A stochastic algorithm that combines ADMM with a class of variance reduced stochastic gradient estimators, including SAGA, SVRG and SARAH, was proposed in \cite{BLZ2021}. The global convergence in expectation was established under the condition that $f_i, i=1,\ldots,N$ and $h$ are semialgebraic, and the convergence rates in the expectation sense were derived based on \L{}ojasiewicz exponent.
In \cite{LTP2022}, by employing the forward-backward envelope serving as a Lyapunov function, Latafat et al.  proved that the cluster points of the iterates generated by the popular proximal Finito/MISO algorithm are the stationary points almost surely in the fully nonconvex case. They further  established the global and linear convergence under the assumption that  $f_i, i=1,\ldots,N$ and $h$ are {\KL}  functions.
By combining the proposed algorithm for problem (\ref{eq:p}) with the variance reduced stochastic gradient estimators (uniformly defined in \cite{BLZ2021,DTLDS2021}), we study a stochastic preconditioned first-order primal-dual algorithm for solving the fully nonconvex finite-sum optimization problem (\ref{eq:p-finite-sum}).
%Compared with the existing closely related algorithms, \revise{the subproblem of the proposed stochastic algorithm is easier to solve since it only need to compute the stochastic gradient estimator and the proximal mapping of convex $h^*$.}

%Finally, the linear convergence in expectation is also established in the context of \L{}ojasiewicz exponent.

\textbf{Contributions.}
The main contributions of this paper can be summarized as follows.
\begin{itemize}
\item[$\bullet$] We propose a preconditioned primal-dual gradient (PPDG) method  for the composite optimization problem (\ref{eq:p}). This problem poses significant challenges due to its fully nonconvex structure including the smooth nonconvex function $f$ and the nonsmooth nonconvex regularizer $h$ coupled with linear operator $A$.
%Existing efficient algorithms for solving problem (\ref{eq:p}) have their limitations.
 Under certain mild assumptions that  the gradient of $f$ is Lipschitz continuous, the liner operator $A$ is surjective and the convex hull of $h$ is proper, we prove that any convergent subsequence of the iterates converges to a critical point of the Lagrange function associated with problem (\ref{eq:p}). This is realized based on establishing the nonincreasing property of a properly selected Lyapunov function. With the additional {\KL}  property of the Lyapunov function, we  demonstrate the global convergence of the generated sequence of iterates.  We further derive convergence rates for the sequence, provided that the Lyapunov function has the \L{}ojasiewicz property.

\item[$\bullet$] To address the challenge of  solving problem \eqref{eq:p-finite-sum} in the fully nonconvex setting, we introduce a stochastic preconditioned primal-dual gradient (SPPDG) method, which can be viewed as a stochastic variant of PPDG. To analyze the convergence  of SPPDG, we first establish a crucial descent property related to the expectation of a Lyapunov function based on the Lagrange function of problem \eqref{eq:p-finite-sum}. Moreover, the upper bound for the conditional expectation of the subgradient of the Lyapunov function is derived. Leveraging these important auxiliary results and assuming that the generated iterates of SPPDG are bounded almost surely, we establish  the subsequence convergence  in the almost sure sense. Moreover, if the Lyapunov function is a  {\KL}  function, we prove that the whole iteration sequence possesses the finite length property and  converges almost surely to a critical point. To the best of our knowledge, such almost sure global convergence result for stochastic algorithms applied to (\ref{eq:p-finite-sum}) in the fully nonconvex setting is new.

%Except for \cite{LMQ2021} about the random reshuffling method for  problem (\ref{eq:p-finite-sum}) with $h\equiv 0$, this kind of almost sure global convergence result for stochastic algorithms applied to fully nonconvex problem (\ref{eq:p-finite-sum}) has not been encountered in prior literature.

\item[$\bullet$] We report the numerical performances of the proposed methods with  PPDG being applied to image denoising via $\ell_0$ gradient minimization, as well as SPPDG being applied to image classification using deep neural network  and a nonconvex graph-guided fused  lasso problem. Compared with the existing popular algorithms, the numerical results verify the advantages of the proposed methods.
\end{itemize}

\textbf{Organization.}
The rest of this paper is organized as follows.  In Section \ref{sec:composite}, we explore the convergence of a preconditioned primal-dual gradient method  for composite optimization problem (\ref{eq:p}).
 In Section  \ref{sec:finite-sum}, we propose a stochastic preconditioned  primal-dual gradient method for finite-sum problem \eqref{eq:p-finite-sum}, and provide a convergence analysis. Numerical experiments are presented in Section \ref{sec:Numerical Experiments} to show the effectiveness of the proposed algorithms.

\textbf{Notation.}
Let $\X$ and $\Y$ be two finite-dimensional real vector spaces equipped with standard inner products $\iprod{\cdot}{\cdot}$ and norms $\|\cdot\|=\sqrt{\iprod{\cdot}{\cdot}}$.
Let $\X^*$ and $\Y^*$ be the dual spaces of $\X$ and $\Y$, respectively.
The operator norm of a linear operator $A:\X\rightarrow\Y$ is
\[
\|A\|:=\max\{\|Ax\|:x\in\X\ \mbox{with}\ \|x\|\leq 1\}.
\]
% For any set $\cC\subset\X $, the indicator function of $\cC$ is denoted by
% \bee
%\delta_{\cC}(x):= \left\{
% \begin{array}{ll}
% 0,  & \text{if } x\in\cC,\\
% 	\\
% +\infty,  & \text{otherwise}.
%
% \end{array}
% \right.
% \eee
Given a closed set $\cC\subset\X $ and a vector $x\in\X $,  the \emph{distance} of $x$ to $\cC$ is given by $\dist(x,\cC):=\min_{y\in\cC}\|x-y\|$. Let $f:\X \to(-\infty,+\infty]$ be a proper lower semicontinuous convex function. The extended \emph{proximal mapping} of $f$ associated to a positive definite linear operator $M$ is defined as
\[
\prox_f^{M}(y):=\argmin_{x\in\X }\left\{f(x)+\frac{1}{2}\|x-y\|_M^2\right\}.
\]
Here, $\|x\|_M^2:=\langle Mx,x\rangle$.
%the \emph{projection} of $x$ onto $\cC$ is defined by
%$\Pi_{\cC}(x):=\argmin_{y\in\cC}\|y-x\|$ where $\Pi_{\cC}:\X  \rightrightarrows\X $ is a multi-valued mapping and is guaranteed to be single-value by the convexity of $\cC$.

%We say a continuously differential function $f:\X \to\R$ is $L$-smooth, if there exists a finite constant $L>0$ such that for any $x,y\in\X $,
%\[\|\nabla f(x)-\nabla f(y)\|\leq L\|x-y\|.\]
%As we all know that the above inequality implies
%\be\label{eq:gradient-L}
%f(y)\leq f(x)+\langle\nabla f(x),y-x\rangle+\frac{L}{2}\|y-x\|^2,\  \forall x,y\in\X .
%\ee
% A necessary condition of $x\in\X $ being a minimizer of $f$ is $0\in\partial f(x)$. We say the point satisfying $0\in\partial f(x)$ is a \emph{critical point} of $f$. In the rest of paper, we use $\crit f$ to denote the set of critical points of function $f$.
For an extended real-valued function $f:\X \to(-\infty,+\infty]$, let $\dom f:=\{x\in\X :f(x)<+\infty\}$ be its domain and
\[f^*(y):=\sup_{x\in\X }\{\langle y,x\rangle-f(x)\},\ y\in\X^* \]
be its \emph{conjugate function}. The conjugate function $h^*$ is always convex and lower semicontinuous, see \cite[Theorem 4.3]{Beck2017}.
%Let $\con f$ be the convex hull of $f$, that is, $\con f$ is the greatest convex function majorized by $f$.
When $f$ is convex, let $\partial f$ denote its \emph{subdifferential}.
A set-valued mapping $F:\X \rightrightarrows\Y $ is said to be \emph{outer semicontinuous} at $\bar{x}$, if for any $u\in\Y$ satisfying that there exist $x^k\rightarrow\bar{x}$ and $u^k\rightarrow u$ with $u^k\in F(x^k)$, it holds that $u\in F(\bar{x})$.
 From \cite[Theorem 24.4]{Rockafellar1970}, if $f$ is lower semicontinuous, proper and convex, the set-valued mapping $\partial f$ is outer semicontinuous, or equivalently, its graph is closed.
\section{PPDG for nonconvex composite optimization}\label{sec:composite}
In this section, we propose PPDG,  a preconditioned  primal-dual first-order  method  based on conjugate duality,  for solving the nonconvex composite optimization problem \eqref{eq:p}, and  establish its convergence. We begin by reviewing preliminary conjugate duality results in Subsection  \ref{subsec:optim}. The algorithmic framework of PPDG and the main assumptions are described in Subsection \ref{subsec:PPDG}. Subsection \ref{subsec:Lyapunov} devotes to derive  the descent property of a Lyapunov function. The subsequence convergence is investigated in Subsection \ref{subsec:subsequent}. Finally, in the setting of {\KL}  property, the main theoretical results regarding global convergence and convergence rates are established in Subsection \ref{subsec:KL}.

%%%--------------------------------------------

\subsection{Conjugate duality and necessary optimality}\label{subsec:optim}
Going back to problem \eqref{eq:p} and drawing upon the conjugate duality theory presented in \cite[Section 2.5.3] {BF2000},  %\cite{2004Variational}
we have that the dual problem  of  \eqref{eq:p} is
\be\label{eq:DP}
\max_{y\in\Y^* }\left\{\inf_{x\in\X } \cL(x,y)\right\},\ \mbox{where}\ \cL(x,y):=f(x)+\iprod{y}{Ax}-h^*(y).
\ee
 %According to the literature  \cite{BF2000}, we know that
As per \cite[Theorem 2.158] {BF2000},  if $\inf_{x\in\X } \cL(x,y)>-\infty$ for any $y\in\Y^* $,  then $\bar{x}$ and $\bar{y}$ are optimal solutions of  \eqref{eq:p} and \eqref{eq:DP}, respectively, if and only if the following relations hold true:
\be\label{eq:optimai-precondi}
\left\{
\begin{array}{ll}
\bar{x} \in \argmin_{x\in\X }\cL(x,\bar{y}),\\
\\
0=h(A\bar{x})+h^*(\bar{y})-\langle \bar{y}, A\bar{x} \rangle.
%y^*\in &\ \partial h(Ax^*).
\end{array}
\right.
\ee
From the definition of conjugate function, it follows that, if (\ref{eq:optimai-precondi}) is satisfied, we have $0\in\partial\cL(\bar{x},\bar{y})$, i.e.,
\be\label{eq:optimal-condition}
	\left\{
	\begin{array}{ll}
		0 = \nabla f(\bar{x})+A^T\bar{y},\\
		\\
		0 \in -\partial h^*(\bar{y})+A\bar{x},
	\end{array}
	\right.
\ee
where $A^T$ is the adjoint operator of $A$.
Let us denote the set of critical points of $\cL$ by
\[
\crit\cL:=\{(\bar{x},\bar{y})\in\X \times\Y^* : 0\in\partial\cL(\bar{x},\bar{y})\}.
\]
Therefore,  our primary aim of this paper is to find a pair $(\bar{x},\bar{y})$ that satisfies the necessary optimality conditions of  the  nonconvex problem \eqref{eq:p}, that is, $(\bar{x},\bar{y})\in \crit\cL$. Similarly, for the nonconvex finite-sum problem (\ref{eq:p-finite-sum}), our goal is  to obtain a critical point of
$$
\cL_s(x,y):=\frac{1}{N}\sum_{i=1}^N f_i(x)+\langle y,Ax\rangle-h^*(y).
$$

%%%--------------------------------------------
\subsection{The PPDG algorithm}\label{subsec:PPDG}
The detail of PPDG is described in Algorithm \ref{alg:PPDG}. This algorithm can be viewed as a first-order primal-dual algorithm by observing the necessary optimality conditions (\ref{eq:optimal-condition}). In specific, \eqref{eq:iter-1} is a standard gradient step associated with the first relation in (\ref{eq:optimal-condition}), and \eqref{eq:iter-2} can be regarded as a proximal gradient step coupled with the  preconditioning technique introduced in \cite{PC2011} for the second relation $0 \in -\partial h^*(\bar{y})+A\bar{x}$.
We also point out that \eqref{eq:iter-2} is equivalent to
\[y^{k+1}=\argmin_{y\in\Y^*}\left\{h^*(y)-\langle y,A(2x^{k+1}-x^k)\rangle+\frac{1}{2}\|y-y^k\|_{M}^2\right\},\]
hence the inverse of $M$ is not actually required in practice.
Moreover, in view of \eqref{eq:iter-2}, from the definition of $\prox_{h^*}^{M}$, it follows that there exists a vector $g^{k+1}\in\partial{h^*}(y^{k+1})$ such that
\be\label{eq:optimal-h}
g^{k+1}=-M(y^{k+1}-y^{k})+A(2x^{k+1}-x^{k}).
\ee
If the sequence $\{(x^k,y^k)\}$ converges to $(\bar{x},\bar{y})$, then (\ref{eq:optimal-h}) immediately implies the second relation $A\bar{x}\in\partial h^*(\bar{y})$ due to the outer semicontinuity of $\partial h^*(\cdot)$.

\begin{algorithm2e}[htp]
\caption{PPDG }
\label{alg:PPDG}
\lnlset{alg:SA1}{1}{Initialization: ~Choose an initial point $(x^0,y^0) \in \X\times\Y^*$, a constant $\alpha>0$ and a positive definite matrix $M$. }
\\ \vspace{.5ex}
\lnlset{alg:SA2}{2}\For{$k=0,1,2,\ldots$}{
%\lnlset{alg:PSG2}{3}{Compute \[z^k=\prox_{\alpha_kr}(x^k).\]}\\ \vspace{.5ex}
\lnlset{alg:SA3}{3}{Update $x^{k},y^{k}$ as follows,
\begin{subnumcases}{\label{eq:iter}}
	x^{k+1}=x^k-\alpha(\nabla f(x^k)+A^Ty^k), \label{eq:iter-1}\\
	\notag\\
	y^{k+1}=\prox_{h^*}^{M}(y^k+M^{-1}A(2x^{k+1}-x^k)).\label{eq:iter-2}
\end{subnumcases}
} \\ \vspace{.5ex}

\lnlset{alg:SA4}{4}{Set $k\leftarrow k+1$.}
\\ }
\end{algorithm2e}

Compared with the existing first-order algorithms for nonsmooth nonconvex optimization problems, one of the main features of Algorithm \ref{alg:PPDG} is that, we compute the proximal mapping of the conjugate function $h^*$ rather than dealing with $h$ directly. This is partially motivated by the popular PDHG algorithm \cite{CP2011} for convex optimization problems. However, let us emphasize that, in the nonconvex setting there are several additional advantages. Firstly,  many related algorithms \cite{ABS2013,ABRS2010,BST2014,BST2018} involve the calculation of the proximal mapping with respect to the nonconvex function $h$, i.e.,
\[
\prox_{h}(x)\in\argmin\limits_{u\in\Y}\left\{h(u)+\frac{1}{2}\|u-x\|^2\right\},
\]
which is usually more prohibitive than computing $\prox_{h^*}^{M}(x)$ because $h^*$ is lower semicontinuous and convex. Secondly, upon observing (\ref{eq:optimal-condition}), in both the definition of $\crit\cL$  and  the later  subsequence convergence analysis of Algorithm \ref{alg:PPDG}, we do not need to  introduce  complex generalized subdifferentials of nonconvex functions, as is often required in many well-studied first-order algorithms for nonsmooth nonconvex optimization problems, see, e.g., \cite{AB2009,ABRS2010,BST2014,BCN2019}.

We would also like to remark that the update of $x^k$ in Algorithm \ref{alg:PPDG} is different from the standard PDHG algorithm, in which,
\[
x^{k+1}=\prox_{\alpha f}(x^k-\alpha A^Ty^k).
\]
In the convex setting, this can be rewritten as the following implicit step,
\[
x^{k+1}=x^k-\alpha(\nabla f(x^{k+1})+A^Ty^k),
\]
due to the fact that $x=\prox_{\alpha f}(y)$ is equivalent to $x=y-\alpha\nabla f(x)$.
The success of  PDHG for convex optimization depends on the efficient calculation of the proximal mapping of $f$, that, however, is unrealistic in our nonconvex setting. The interested readers are referred to the survey paper \cite{CP2016} for more discussion of  PDHG in the convex setting.

In order to establish the convergence of Algorithm \ref{alg:PPDG}, we impose some standard assumptions throughout this section.
\begin{assumption} \label{ass:setting}
Suppose that:
\begin{itemize}
	\item[$(i)$] The function $f$ is $L$-smooth over $\X$, i.e., $f$ is continuously differentiable and there exists a  constant $L>0$ such that for any $x, z\in\X $,
	\[\|\nabla f(x)-\nabla f(z)\|\leq L\|x-z\|.\]
	\item[$(ii)$] $\inf_{x\in\X } \cL(x,y)>-\infty$ for any $y\in\Y^* $.
\item[(iii)] The linear operator $A$ is surjective.
\item[(iv)] The convex hull of $h$ is proper.
	%operator $A$ is surjective or $AA^T$ is positive definite.
\end{itemize}
\end{assumption}

\begin{remark}\label{rem:ass1}
Some comments on Assumption \ref{ass:setting} are in order.
\begin{enumerate}
\item[(i)] A well-known gradient descent lemma  under Assumption (i) is that,
\be\label{eq:gradient-L}
%f(x)\leq f(z)+\langle\nabla f(z),x-z\rangle+\frac{L}{2}\|x-z\|^2.
f(x^{k+1})\leq f(x^k)+\langle\nabla f(x^k),x^{k+1}-x^k\rangle+\frac{L}{2}\|x^{k+1}-x^k\|^2.
\ee
Moreover, applying \eqref{eq:iter-1} and Assumption (i), we have
\be\label{eq:lambda-x}
\begin{aligned}
\|A^T(y^{k+1}-y^k)\|=\ &\left\|\left(\frac{x^{k+1}-x^{k+2}}{\alpha}-\nabla f(x^{k+1})\right)-\left(\frac{x^k-x^{k+1}}{\alpha}-\nabla f(x^k)\right)\right\|\\
\leq\ & \left(\frac{1}{\alpha}+L\right)\|x^{k+1}-x^{k}\|+\frac{1}{\alpha}\|x^{k+2}-x^{k+1}\|.
\end{aligned}
\ee
\item[(ii)]  Assumption (ii) ensures that the sequence generated by Algorithm \ref{alg:PPDG} is well-defined. It is also indispensable in the subsequence convergence analysis (see Theorem \ref{th:usual}).
\item[(iii)] The linear operator $A$ is surjective if and only if the matrix associated with $AA^T$ is positive definite. Thus, under Assumption (iii), a natural choice of $M$ in Algorithm \ref{alg:PPDG} is the associated matrix of $\alpha AA^T$. As a special case, if $A$ is the identity operator, then we can choose $M=\alpha I$ and hence the extended proximal mapping $\prox_{h^*}^{M}(\cdot)$ reduces to the classical proximal mapping $\prox_{\frac{1}{\alpha} h^*}(\cdot)$. Moreover, under Assumption (iii), for any $y\in\Y^*$ we have
    \be\label{eq:hat-lambda}
    \hat{\lambda}\|y\|\leq \|A^Ty\|,
    \ee
    where $\hat{\lambda}:=\sqrt{\ewmin(AA^T)}$ and $\ewmin(AA^T)$  denotes the smallest eigenvalue of $AA^T$.
\item[(iv)] It is known that, without any assumption on $h$, the conjugate function $h^*$ is lower semicontinuous and convex, see, e.g., \cite[Theorem 4.3]{Beck2017}. However, in order to guarantee that $h^*$ is proper, an additional assumption is required.
 It is shown in \cite[Theorem 11.1]{RW1998} that $h^*$ is proper if Assumption (iv) is satisfied.
\end{enumerate}
\end{remark}

%%%--------------------
\subsection{A Lyapunov function}\label{subsec:Lyapunov}
As discussed previously, the primary aim of this section is to establish the convergence result that  the sequence $(x^k,y^k)$ generated by Algorithm \ref{alg:PPDG} converges to a critical point of the Lagrange function  $\cL(x,y)$. However, this is difficult to fulfill for the nonconvex composite optimization problem \eqref{eq:p} through the usual approach owing to the lack of the descent property of $\cL$. Instead, we shall  work with an auxiliary function to alleviate this difficulty.

Let us define the following Lyapunov function
\[
\sL(x,y,u,v):=\cL(x,y)-a\|x-u\|^2+b\|x-v\|^2, \ \forall x,u,v\in\X, \ y\in\Y^*.
\]
Here,  with the stepsize $\alpha$ and the Lipschitz constant $L$ of $\nabla f$,  let
\be\label{eq:a-b}
a:=\frac{\delta}{\alpha},\quad b:=\frac{1}{2\alpha}-\frac{L}{4}-\frac{\delta}{\alpha}-\frac{\alpha\delta L^2}{2}-\delta L+\frac{\alpha L^2}{4\delta},
\ee
and $\delta$ be a properly selected constant such that $a>0$ and $b>0$. Let $c$ be a constant given by
\be\label{eq:c}
c:=b-\frac{\alpha L^2}{2\delta}.
\ee
%We now show that $a>0$, $b>0$ and $c>0$ are possible to be satisfied simultaneously by carefully selecting $\delta$.
By an elementary calculation,
if we choose $\delta=1/5$, then  the choice of stepsize $\alpha\in (0, 1/3L)$ is sufficient to guarantee $c>0$, and consequently, $a>0$ and  $b>0$. Therefore, we can safely assume that $a$, $b$ and $c$ are positive in the rest of this section.

The convergence analysis of Algorithm \ref{alg:PPDG} will significantly rely on the properties of $\sL$ which shall be investigated in this subsection. For a start, we show in the following lemma that the critical point set $\crit\sL$ is closely related to $\crit\cL$.
\begin{lemma}\label{lem:relation-crit}
	Let $x,u,v\in\X $, $y\in\Y^* $. Then, $(x,y,u,v)\in\crit\sL$ is equivalent to $(x,y)\in\crit\cL$ and $u=v=x$.
\end{lemma}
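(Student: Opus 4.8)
The plan is to characterize $\crit \sL$ directly by computing the subdifferential $\partial \sL$ at an arbitrary point $(x,y,u,v)$ and setting it equal to zero. Since $\sL(x,y,u,v) = \cL(x,y) - a\|x-u\|^2 + b\|x-v\|^2$, the quadratic perturbation terms are smooth in $x,u,v$, so the only nonsmoothness comes from the $-h^*(y)$ term inside $\cL$. Thus I would write $\partial \sL$ as the Cartesian-structured collection of partial (sub)gradients: the $u$- and $v$-components are pure gradients of the quadratic terms, the $x$-component combines $\nabla_x \cL(x,y) = \nabla f(x) + A^T y$ with the quadratic contributions, and the $y$-component is $\partial_y \cL(x,y) = Ax - \partial h^*(y)$.

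First I would compute the four stationarity equations from $0 \in \partial \sL(x,y,u,v)$. The $u$-component gives $\nabla_u(-a\|x-u\|^2) = 2a(x-u) = 0$, so $u = x$ (using $a>0$). The $v$-component gives $\nabla_v(b\|x-v\|^2) = -2b(x-v) = 0$, so $v = x$ (using $b>0$). Substituting $u=v=x$, the $x$-component becomes $\nabla f(x) + A^T y - 2a(x-u) + 2b(x-v) = \nabla f(x) + A^T y = 0$, which is exactly the first relation of \eqref{eq:optimal-condition}. Finally the $y$-component $0 \in Ax - \partial h^*(y)$ is the second relation of \eqref{eq:optimal-condition}. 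Together these say precisely $0 \in \partial \cL(x,y)$, i.e. $(x,y) \in \crit \cL$, together with $u=v=x$. Conversely, if $(x,y)\in\crit\cL$ and $u=v=x$, then reading the same four equations backward shows $0\in\partial\sL(x,y,u,v)$, giving the equivalence.

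The step requiring the most care is justifying that the subdifferential of $\sL$ genuinely separates across the block variables, i.e. that $\partial\sL(x,y,u,v)$ is the product of the individual partial subdifferentials, so that the single inclusion $0\in\partial\sL$ is equivalent to the four componentwise conditions. This is where I would lean on structure: $\sL$ is a sum of a continuously differentiable function (the $f$, the coupling $\langle y,Ax\rangle$, and the two quadratics, all $C^1$ jointly) and the separable convex term $-(-h^*)(y) = $ the function depending only on $y$ through $h^*$. Since $h^*$ is proper, lower semicontinuous and convex by Remark \ref{rem:ass1}(iv) and the cited \cite[Theorem 4.3]{Beck2017}, the only nonsmooth variable is $y$, and the sum rule for a $C^1$ function plus a function of a single block variable applies cleanly, yielding the desired product structure and validating that $0\in\partial\sL$ decouples into the four equations above. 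I expect this decoupling justification, rather than the algebra of solving the equations, to be the main obstacle, though it is a routine consequence of the additively separable smooth-plus-convex form of $\sL$.
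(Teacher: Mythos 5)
Your proposal is correct and follows essentially the same route as the paper: the paper likewise writes out the four componentwise conditions $0=\nabla_x\sL$, $0\in\partial_y\sL=\partial_y\cL$, $0=\nabla_u\sL=2a(x-u)$, $0=\nabla_v\sL=2b(v-x)$, deduces $u=v=x$ from the last two (using $a,b>0$), and then identifies the first two with $0\in\partial\cL(x,y)$, the converse being immediate. Your extra paragraph justifying the block decoupling of $\partial\sL$ (smooth terms plus the convex $h^*$ depending only on $y$) is a valid elaboration of a step the paper treats as self-evident, not a different method.
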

\begin{proof}
In view of the definition of $\sL$,  the condition $(x,y,u,v)\in\crit\sL$ reads
\[
\begin{array}{ll}
0=\nabla_x\sL(x,y,u,v)=\nabla_x\cL(x,y)-2a(x-u)+2b(x-v),\\
0\in  \partial_y\sL(x,y,u,v)=\partial_{y}\cL(x,y),\\
0=\nabla_u\sL(x,y,u,v)=2a(x-u),\\
0=\nabla_v\sL(x,y,u,v)=2b(v-x).
\end{array}
\]
The latter two relations imply that $u=v=x$. This, together with the first two relations, leads to $(0,0)\in\partial\cL(x,y)$, which means  $(x,y)\in\crit{\cL}$. The converse is obvious.
\end{proof}

Throughout the remainder of this section, we let $M$ be the matrix associated with $\alpha AA^T$, that is, for all $y,\hat{y}\in\Y^*$,  $\iprod{\hat{y}}{My}=\iprod{\hat{y}}{\alpha AA^Ty}$. The following lemma presents a recursive relation for $\cL$.
\begin{lemma}\label{lem:cL}
Under Assumption \ref{ass:setting},  for all $k\geq 1$, it holds
\[
\cL(x^{k+1},y^{k+1})\leq  \ \cL(x^{k},y^{k})-\left(\frac{1}{\alpha}-\frac{L}{2}\right)\|x^{k+1}-x^k\|^2+\langle y^{k+1}-y^k, \alpha A(\nabla f(x^{k-1})-\nabla f(x^k))\rangle.
\]
\end{lemma}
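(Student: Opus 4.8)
The plan is to split the one-step change of the Lagrangian into a purely primal increment and a purely dual increment, namely
\[
\cL(x^{k+1},y^{k+1})-\cL(x^k,y^k)=\underbrace{\big[\cL(x^{k+1},y^k)-\cL(x^k,y^k)\big]}_{\text{primal}}+\underbrace{\big[\cL(x^{k+1},y^{k+1})-\cL(x^{k+1},y^k)\big]}_{\text{dual}},
\]
and bound each term separately, the first producing the quadratic term and the second producing the cross term on the right-hand side.

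For the primal increment $f(x^{k+1})-f(x^k)+\iprod{y^k}{A(x^{k+1}-x^k)}$, I would apply the descent inequality \eqref{eq:gradient-L} and then eliminate $\nabla f(x^k)$ via the gradient step \eqref{eq:iter-1}, which reads $\nabla f(x^k)=-\frac{1}{\alpha}(x^{k+1}-x^k)-A^Ty^k$. Upon substitution the bilinear contributions $\pm\iprod{y^k}{A(x^{k+1}-x^k)}$ cancel, leaving exactly $-\big(\frac{1}{\alpha}-\frac{L}{2}\big)\|x^{k+1}-x^k\|^2$. The dual increment equals $\iprod{y^{k+1}-y^k}{Ax^{k+1}}+\big[h^*(y^k)-h^*(y^{k+1})\big]$, so the crux is to upper bound $h^*(y^k)-h^*(y^{k+1})$. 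Here I anticipate the main obstacle: the natural subgradient inequality using $g^{k+1}\in\partial h^*(y^{k+1})$ from \eqref{eq:optimal-h} controls $h^*(y^{k+1})-h^*(y^k)$ and hence points in the wrong direction. The fix is to invoke convexity of $h^*$ at the earlier point $y^k$, using the subgradient $g^k\in\partial h^*(y^k)$ supplied by the optimality condition \eqref{eq:optimal-h} at the \emph{previous} iteration, i.e.\ $g^k=A(2x^k-x^{k-1})-M(y^k-y^{k-1})$; this is precisely why the statement requires $k\ge 1$. Convexity then gives $h^*(y^k)-h^*(y^{k+1})\le\iprod{g^k}{y^k-y^{k+1}}$.

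The finishing step is the substitution of this $g^k$, after which the dual increment is bounded by $\iprod{y^{k+1}-y^k}{A(x^{k+1}-2x^k+x^{k-1})+M(y^k-y^{k-1})}$. I would then difference the gradient step \eqref{eq:iter-1} at indices $k$ and $k-1$ to obtain $x^{k+1}-2x^k+x^{k-1}=\alpha\big(\nabla f(x^{k-1})-\nabla f(x^k)\big)+\alpha A^T(y^{k-1}-y^k)$, apply $A$, and use the preconditioner choice $M=\alpha AA^T$ so that the term $\alpha AA^T(y^{k-1}-y^k)$ cancels exactly against $M(y^k-y^{k-1})$. What survives is precisely $\alpha A\big(\nabla f(x^{k-1})-\nabla f(x^k)\big)$, yielding the cross term. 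Summing the primal and dual bounds gives the claim. The delicate point is this exact cancellation of the dual--dual contributions, which is what forces the specific preconditioner $M=\alpha AA^T$ fixed just before the lemma; everything else is routine substitution.
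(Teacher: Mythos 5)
Your proof is correct and takes essentially the same route as the paper's: the paper likewise bounds $f(x^{k+1})$ via the descent lemma with $\nabla f(x^k)$ eliminated through \eqref{eq:iter-1}, bounds $-h^*(y^{k+1})$ by convexity of $h^*$ using the subgradient $g^k\in\partial h^*(y^k)$ obtained from \eqref{eq:optimal-h} at the previous iteration (exactly your fix, and the source of the restriction $k\geq 1$), and then differences the gradient step so that the term $(\alpha AA^T-M)(y^{k-1}-y^k)$ vanishes under the choice $M=\alpha AA^T$. Your explicit primal/dual splitting of the increment is only a cosmetic reorganization of the paper's combination of the same two inequalities followed by adding $\langle y^{k+1},Ax^{k+1}\rangle$ to both sides.
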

\begin{proof}
From \eqref{eq:iter-1} and \eqref{eq:gradient-L}, it follows that
\[
f(x^{k+1})\leq f(x^k)-\langle y^k,A(x^{k+1}-x^k)\rangle-\left(\frac{1}{\alpha}-\frac{L}{2}\right)\|x^{k+1}-x^k\|^2.
\]
Taking $k=k-1$ in \eqref{eq:optimal-h} and using the convexity of $h^*$, we have
\[
-h^*(y^{k+1})\leq -h^*(y^k)+\langle y^{k}-y^{k+1},-M(y^{k}-y^{k-1})+A(2x^{k}-x^{k-1})\rangle.
\]
Combining these two inequalities, adding  $\langle y^{k+1},Ax^{k+1}\rangle$ on both sides and recalling that $\cL(x,y)=f(x)+\iprod{y}{Ax}-h^*(y)$, we obtain
\be\label{eq:a1}
\cL(x^{k+1},y^{k+1})\leq \cL(x^{k},y^{k})-\left(\frac{1}{\alpha}-\frac{L}{2}\right)\|x^{k+1}-x^k\|^2+\langle y^{k+1}-y^k, A(x^{k+1}-x^k+x^{k-1}-x^{k})+M(y^k-y^{k-1})\rangle.
\ee
 Applying \eqref{eq:iter-1} again, one has
\bee
\begin{aligned}
 &\langle y^{k+1}-y^k, A(x^{k+1}-x^k+x^{k-1}-x^{k})+M(y^k-y^{k-1})\rangle\\
 &=\langle y^{k+1}-y^k, (\alpha AA^T-M)(y^{k-1}-y^k)\rangle
+\langle y^{k+1}-y^k, \alpha A(\nabla f(x^{k-1})-\nabla f(x^k))\rangle\\
&=\langle y^{k+1}-y^k, \alpha A(\nabla f(x^{k-1})-\nabla f(x^k))\rangle.
\end{aligned}
\eee
Substituting this relation into (\ref{eq:a1}), we complete the proof.
\end{proof}

Now, we establish the following descent property of $\sL$ which will play a pivotal role in the discussion of global convergence.
\begin{lemma}\label{lem:descent}
Let  Assumption \ref{ass:setting} hold. Then,
 for all $k\geq 1$,
\be\label{ieq:descent-result}
\sL(x^{k+1},y^{k+1},x^{k+2},x^k)+c(\|x^{k+1}-x^{k}\|^2+\|x^{k}-x^{k-1}\|^2)\leq \sL(x^{k},y^{k},x^{k+1},x^{k-1}),
\ee
where $c$ is defined in (\ref{eq:c}).
\end{lemma}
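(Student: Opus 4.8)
The plan is to start from the recursive estimate of Lemma \ref{lem:cL} and reduce the claimed descent \eqref{ieq:descent-result} to a single inequality controlling the leftover inner-product term. First I would expand both Lyapunov values using the definition of $\sL$. Writing $t:=\|x^{k+1}-x^k\|$, $s:=\|x^k-x^{k-1}\|$ and $r:=\|x^{k+2}-x^{k+1}\|$, the left-hand side of \eqref{ieq:descent-result} equals $\cL(x^{k+1},y^{k+1})-a r^2+b t^2+c(t^2+s^2)$, while the right-hand side equals $\cL(x^k,y^k)-a t^2+b s^2$. Cancelling $\cL(x^k,y^k)$ and rearranging, the claim is equivalent to
\[
\cL(x^{k+1},y^{k+1})-\cL(x^k,y^k)\le-(a+b+c)\,t^2+a\,r^2+(b-c)\,s^2,
\]
where I would use $b-c=\tfrac{\alpha L^2}{2\delta}$ from \eqref{eq:c}.

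Next I would invoke Lemma \ref{lem:cL}, which bounds the left-hand side above by $-(\tfrac1\alpha-\tfrac L2)t^2$ plus the cross term $\iprod{y^{k+1}-y^k}{\alpha A(\nabla f(x^{k-1})-\nabla f(x^k))}$. It then suffices to dominate this cross term by $\big[(\tfrac1\alpha-\tfrac L2)-(a+b+c)\big]t^2+a\,r^2+(b-c)s^2$. Substituting \eqref{eq:a-b}--\eqref{eq:c}, the $t^2$-coefficient simplifies to $\tfrac{\delta}{\alpha}+\alpha\delta L^2+2\delta L$, the $r^2$-coefficient is $a=\tfrac{\delta}{\alpha}$, and the $s^2$-coefficient is $\tfrac{\alpha L^2}{2\delta}$, all positive. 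To estimate the cross term itself, I would pass $A$ to its adjoint, apply Cauchy--Schwarz and the $L$-Lipschitz property of $\nabla f$ to obtain the bound $\alpha L\,s\,\|A^T(y^{k+1}-y^k)\|$, and then insert \eqref{eq:lambda-x} to arrive at $L(1+\alpha L)\,s t+L\,s r$.

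Finally I would split the two mixed products by Young's inequality with parameters forced to match the target coefficients exactly. For the $sr$-term I would use $L\,s r\le\tfrac{\delta}{\alpha}r^2+\tfrac{\alpha L^2}{4\delta}s^2$, which consumes precisely the $r^2$-budget $a$. For the $st$-term I would use $L(1+\alpha L)\,s t\le\tfrac{\alpha L^2}{4\delta}s^2+\tfrac{\delta(1+\alpha L)^2}{\alpha}t^2$; expanding $(1+\alpha L)^2$ shows the $t^2$-coefficient equals exactly $\tfrac{\delta}{\alpha}+2\delta L+\alpha\delta L^2$, matching the budget, while the two $s^2$-contributions add to $\tfrac{\alpha L^2}{4\delta}+\tfrac{\alpha L^2}{4\delta}=\tfrac{\alpha L^2}{2\delta}=b-c$. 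Summing the two splits and substituting back into Lemma \ref{lem:cL} then yields \eqref{ieq:descent-result}.

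I expect the main obstacle to be the bookkeeping in this last step: the Young parameters are not free but are dictated by the simultaneous requirement that the $r^2$, $s^2$ and $t^2$ coefficients coincide with $a$, $b-c$ and $\tfrac1\alpha-\tfrac L2-(a+b+c)$. Verifying that the two $s^2$-contributions close \emph{exactly} to $b-c$, rather than leaving a positive residual that would force a stricter stepsize restriction, is the delicate point, and it is precisely this matching that pins down the particular form of $b$ in \eqref{eq:a-b}.
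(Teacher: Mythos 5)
Your proposal is correct and takes essentially the same route as the paper: both start from Lemma \ref{lem:cL}, bound the cross term $\langle y^{k+1}-y^k,\alpha A(\nabla f(x^{k-1})-\nabla f(x^k))\rangle$ via the Lipschitz property of $\nabla f$ and the bound \eqref{eq:lambda-x}, and match the resulting coefficients against $a$, $b-c$ and the $t^2$-budget dictated by \eqref{eq:a-b}--\eqref{eq:c}. The only cosmetic difference is in the order of operations --- the paper applies a single Young inequality to the inner product and then squares \eqref{eq:lambda-x} via $(u+v)^2\le 2u^2+2v^2$, whereas you split the product first and apply two Young inequalities with matched parameters --- but the two bookkeepings are algebraically equivalent, yielding the identical coefficients $\alpha\delta\left(\tfrac{1}{\alpha}+L\right)^2$ for $t^2$, $\tfrac{\delta}{\alpha}=a$ for $r^2$, and $\tfrac{\alpha L^2}{2\delta}=b-c$ for $s^2$.
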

\begin{proof}
From Lemma \ref{lem:cL},  we have
\be\label{eq:pre-condition-descent}
\begin{aligned}
\cL(x^{k+1},y^{k+1})&\leq  \cL(x^{k},y^{k})-\left(\frac{1}{\alpha}-\frac{L}{2}\right)\|x^{k+1}-x^k\|^2+\langle y^{k+1}-y^k, \alpha A(\nabla f(x^{k-1})-\nabla f(x^k))\rangle\\
& \leq  \cL(x^{k},y^{k})-\left(\frac{1}{\alpha}-\frac{L}{2}\right)\|x^{k+1}-x^k\|^2+\frac{\alpha\delta}{2}\|A^T(y^{k+1}-y^k)\|^2+\frac{\alpha L^2}{2\delta}\|x^{k}-x^{k-1}\|^2,
\end{aligned}
\ee
where the second inequality is deduced from the Lipschitz continuity of $\nabla f $ and the fact that $\langle x,y\rangle\leq \frac{\delta}{2}\|x\|^2+\frac{1}{2\delta}\|y\|^2$.
From \eqref{eq:lambda-x},  it follows that
\bee
\|A^T(y^{k+1}-y^k)\|^2\leq 2\left(\frac{1}{\alpha}+L\right)^2\|x^{k+1}-x^{k}\|^2+\frac{2}{\alpha^2}\|x^{k+2}-x^{k+1}\|^2.
\eee
Substituting this inequality into  \eqref{eq:pre-condition-descent} and recalling the definitions of $a,b,c$, we conclude
\bee
\begin{aligned}
	\cL(x^{k+1},y^{k+1})\leq \cL(x^{k},y^{k})-(a+b+c)\|x^{k+1}-x^k\|^2+(b-c)\|x^{k}-x^{k-1}\|^2+a\|x^{k+2}-x^{k+1}\|^2.
\end{aligned}
\eee
Rewriting this inequality gives
\bee
\begin{aligned}
&\cL(x^{k+1},y^{k+1})-a\|x^{k+1}-x^{k+2}\|^2+b\|x^{k+1}-x^{k}\|^2+c(\|x^{k+1}-x^{k}\|^2+\|x^{k}-x^{k-1}\|^2)\\
&\leq  \cL(x^{k},y^{k})-a\|x^{k}-x^{k+1}\|^2+b\|x^{k}-x^{k-1}\|^2.
\end{aligned}
\eee
The proof is completed by recalling the definition of $\sL$.
\end{proof}

Denote
\[
z^k:=(x^k,y^k,x^{k+1},x^{k-1}).
\]
Lemma \ref{lem:descent} implies that the sequence $\{\sL(z^k)\}$ is nonincreasing. Let
\[
d^k:=(\nabla_x\sL(z^k),Ax^k-g^k,\nabla_u\sL(z^k),\nabla_v\sL(z^k)),
\]
where $g^k=-M(y^{k}-y^{k-1})+A(2x^{k}-x^{k-1})$.  From (\ref{eq:optimal-h}) we have that $g^k\in\partial h^*(y^k)$, and hence $d^k\in\partial\sL(z^k)$ by the definition of $\sL$.
In the following, we derive a bound of $d^k$.
\begin{lemma}\label{lem:gradient}
Under Assumption \ref{ass:setting}, it holds that
\[
\|d^{k}\|\leq \gamma_1\|x^{k}-x^{k-1}\|+\gamma_2\|x^{k+1}-x^{k}\|,
\]
where
\[\gamma_1:=2L+4b+\frac{2}{\alpha}+(2+\alpha L)\|A\|, \  \gamma_2:=4a+\frac{1}{\alpha}+\|A\|.\]
\end{lemma}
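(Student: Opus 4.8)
The plan is to bound $\|d^k\|$ by the sum of the norms of its four components, using $\|d^k\|\le \|\nabla_x\sL(z^k)\|+\|Ax^k-g^k\|+\|\nabla_u\sL(z^k)\|+\|\nabla_v\sL(z^k)\|$, then to estimate each component in terms of the two consecutive increments $\|x^{k+1}-x^k\|$ and $\|x^k-x^{k-1}\|$, and finally to collect the coefficients into $\gamma_2$ and $\gamma_1$, respectively.

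First I would dispatch the three components that come straight from the definition $\sL(x,y,u,v)=\cL(x,y)-a\|x-u\|^2+b\|x-v\|^2$ evaluated at $z^k=(x^k,y^k,x^{k+1},x^{k-1})$. The $u$- and $v$-derivatives are immediate, $\nabla_u\sL(z^k)=2a(x^k-x^{k+1})$ and $\nabla_v\sL(z^k)=2b(x^{k-1}-x^k)$, contributing $2a\|x^{k+1}-x^k\|$ and $2b\|x^k-x^{k-1}\|$. For the $x$-component, $\nabla_x\sL(z^k)=\nabla f(x^k)+A^Ty^k-2a(x^k-x^{k+1})+2b(x^k-x^{k-1})$; the key simplification is that the primal step \eqref{eq:iter-1} yields $\nabla f(x^k)+A^Ty^k=\tfrac1\alpha(x^k-x^{k+1})$, so this component is again a linear combination of the two increments, and bounding $|\tfrac1\alpha-2a|\le\tfrac1\alpha+2a$ keeps its coefficients explicit.

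The one genuinely delicate block is the dual component $Ax^k-g^k$, and this is where I expect the main obstacle. Inserting the given value $g^k=-M(y^k-y^{k-1})+A(2x^k-x^{k-1})$ and cancelling the $Ax^k$ terms, I would rewrite it as $Ax^k-g^k=-A(x^k-x^{k-1})+M(y^k-y^{k-1})$. This still contains the purely dual quantity $M(y^k-y^{k-1})$, which must be converted into primal increments. Using that $M$ is the matrix of $\alpha AA^T$, I would estimate $\|M(y^k-y^{k-1})\|\le\alpha\|A\|\,\|A^T(y^k-y^{k-1})\|$ and then invoke the difference bound \eqref{eq:lambda-x} shifted to index $k-1$, namely $\|A^T(y^k-y^{k-1})\|\le(\tfrac1\alpha+L)\|x^k-x^{k-1}\|+\tfrac1\alpha\|x^{k+1}-x^k\|$. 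It is here that surjectivity of $A$ (guaranteeing $M=\alpha AA^T$ is positive definite and legitimizing \eqref{eq:lambda-x}) enters, and here that the factors $L$, $1/\alpha$, and the operator norm $\|A\|$ appear.

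Finally I would combine the four estimates by the triangle inequality and group coefficients: the terms multiplying $\|x^{k+1}-x^k\|$ assemble into $\gamma_2=4a+\tfrac1\alpha+\|A\|$, and those multiplying $\|x^k-x^{k-1}\|$ into $\gamma_1$. The remaining work is pure bookkeeping; I anticipate the only nontrivial point is the passage from the dual increment $M(y^k-y^{k-1})$ to primal increments through \eqref{eq:lambda-x}, with the precise (and somewhat generous) form of $\gamma_1$ following from how the increment coefficients are grouped and bounded.
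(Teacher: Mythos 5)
Your proof is correct and shares the overall architecture of the paper's proof: the same four-component triangle-inequality decomposition of $d^k$, the same treatment of the dual block $Ax^k-g^k$ via $\|M(y^k-y^{k-1})\|\leq\alpha\|A\|\,\|A^T(y^k-y^{k-1})\|$ followed by \eqref{eq:lambda-x} shifted to index $k-1$ (yielding exactly the paper's bound \eqref{eq:gradient-cond2}), and the same immediate bounds on the $u$- and $v$-components. The one genuine deviation is your handling of $\nabla_x\sL(z^k)$: you substitute $\nabla f(x^k)+A^Ty^k=\frac{1}{\alpha}(x^k-x^{k+1})$ directly from \eqref{eq:iter-1}, obtaining $\|\nabla_x\sL(z^k)\|\leq\left(\frac{1}{\alpha}+2a\right)\|x^{k+1}-x^k\|+2b\|x^k-x^{k-1}\|$, whereas the paper adds and subtracts $\nabla f(x^{k-1})+A^Ty^{k-1}$, then invokes the Lipschitz continuity of $\nabla f$, \eqref{eq:lambda-x}, and \eqref{eq:iter-1} at step $k-1$, which inflates the $\|x^k-x^{k-1}\|$ coefficient by an extra $2L+\frac{2}{\alpha}$ --- precisely the surplus visible in the stated $\gamma_1$. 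Consequently your argument proves the lemma with the strictly smaller constant $4b+(2+\alpha L)\|A\|$ in place of $\gamma_1$ (your $\|x^{k+1}-x^k\|$ coefficients sum exactly to $\gamma_2=4a+\frac{1}{\alpha}+\|A\|$), and the stated inequality follows a fortiori since $\gamma_1$ dominates; so your shortcut is not only valid but tighter. One small misattribution worth correcting: \eqref{eq:lambda-x} requires only the update \eqref{eq:iter-1} and the $L$-smoothness of $f$, not surjectivity of $A$; Assumption \ref{ass:setting}(iii) enters this lemma only through the choice $M=\alpha AA^T$ being a legitimate (positive definite) preconditioner, not through the increment bound itself.
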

\begin{proof}
%Let $d^{k+1}=(d_1^{k+1},d_2^{k+1},d_3^{k+1},d_4^{k+1})\in\partial\hat{\cL}_{k+1}$. %$d_1^{k+1}=\nabla_x\cL(x^{k+1},y^{k+1})$, $d_2^{k+1}\in\partial_{y}\cL(x^{k+1},y^{k+1})$.
For the first component of $d^{k}$,  we have
\bee
\begin{aligned}
&\|\nabla_x\sL(z^{k})\|\\
&=  \|\nabla f(x^{k})+A^Ty^{k}-2a(x^{k}-x^{k+1})+2b(x^{k}-x^{k-1})\|\\
&= \|\nabla f(x^{k})-\nabla f(x^{k-1})+A^T(y^{k}-y^{k-1})+\nabla f(x^{k-1})+A^Ty^{k-1}-2a(x^{k}-x^{k+1})+2b(x^{k}-x^{k-1})\|\\
&\leq (L+2b)\|x^{k}-x^{k-1}\|+2a\|x^{k}-x^{k+1}\|+\|A^T(y^{k}-y^{k-1})\|+\|\nabla f(x^{k-1})+A^Ty^{k-1}\|,
\end{aligned}
\eee
which, together with \eqref{eq:iter-1} and \eqref{eq:lambda-x}, gives that
\be\label{eq:gradient-cond1}
\|\nabla_x\sL(z^{k})\| \leq 2\left(L+b+\frac{1}{\alpha}\right)\|x^{k}-x^{k-1}\|+\left(2a+\frac{1}{\alpha}\right)\|x^{k+1}-x^{k}\|.
\ee
For the second component of $d^{k}$, by \eqref{eq:optimal-h}  we obtain
\bee
\begin{aligned}
\|Ax^{k}-g^{k}\|=\|M(y^{k}-y^{k-1})-A(x^{k}-x^{k-1})\|
\leq \alpha\|A\|\|A^T(y^{k}-y^{k-1})\|+\|A\|\|x^{k}-x^{k-1}\|,
\end{aligned}
\eee
which, together with \eqref{eq:lambda-x}, yields that
\be\label{eq:gradient-cond2}
\begin{aligned}
	\|Ax^{k}-g^{k}\|\leq (2+\alpha L)\|A\|\|x^{k}-x^{k-1}\|+\|A\|\|x^{k+1}-x^{k}\|.
\end{aligned}
\ee
For $\nabla_u\sL(z^{k})$ and $\nabla_v\sL(z^{k})$, one has
\be\label{eq:gradient-cond3}
\|\nabla_{u}\sL(z^{k})\|=2a\|x^{k+1}-x^{k}\|,\quad
\|\nabla_{v}\sL(z^{k})\|=2b\|x^{k}-x^{k-1}\|.
\ee
Combining \eqref{eq:gradient-cond1}, \eqref{eq:gradient-cond2} and \eqref{eq:gradient-cond3} together, we have
\[\|d^{k}\|\leq \left(2L+4b+\frac{2}{\alpha}+(2+\alpha L)\|A\|\right)\|x^{k}-x^{k-1}\|+\left(4a+\frac{1}{\alpha}+\|A\|\right)\|x^{k+1}-x^{k}\|.\]
The proof is completed.
\end{proof}

%%%--------------------------------------------

\subsection{Subsequence convergence}\label{subsec:subsequent}
Let $\cC$  denote the set of cluster points of the sequence $\{(x^k,y^k)\}$ generated by Algorithm \ref{alg:PPDG}. Now, we establish the subsequence convergence based on the previous lemmas concerning $\sL$. These convergence results shall be proved under the assumption that the sequence $\{(x^k,y^k)\}$ is bounded, which is a standard assumption in the global convergence analysis of nonconvex optimization algorithms, see \cite{BST2014,BST2018,BCN2019} for instance.
\begin{theorem}\label{th:usual}
	Let the sequence $\{(x^k,y^k)\}$ be bounded and  Assumption \ref{ass:setting} hold. Then,
\begin{itemize}
	\item[(i)] $\sum_{k=1}^{\infty}\|x^{k+1}-x^k\|^2<\infty$ and  $\sum_{k=1}^{\infty}\|y^{k+1}-y^k\|^2<\infty$;
	\item[(ii)] $\cC$ is a nonempty compact set and
	\[\lim_{k\to\infty}\dist((x^k,y^k),\cC)=0;\] %if sequence $\{(x^k,y^k)\}$ is bounded, then $\cC(x^0,y^0)$ is nonempty and compact.
	\item[(iii)] $\cC\subseteq \crit\cL$;
	\item[(iv)] $\cL$ is finite and constant on $\cC$. 	
\end{itemize}
\end{theorem}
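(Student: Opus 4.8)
The plan is to extract all four conclusions from the descent inequality of Lemma~\ref{lem:descent} together with the boundedness hypothesis, the recurring difficulty being that $\cL$ inherits no continuity from the nonconvex $h$, which enters only through its convex conjugate $h^*$.

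First, for (i), I would telescope \eqref{ieq:descent-result}: summing over $k=1,\dots,K$ gives $\sum_{k=1}^{K} c\big(\|x^{k+1}-x^{k}\|^2+\|x^{k}-x^{k-1}\|^2\big)\le \sL(z^1)-\sL(z^{K+1})$. Since $c>0$, square-summability of the $x$-increments follows once $\sL(z^{K+1})$ is shown to be bounded below along the sequence. Writing $\sL(z^{K+1})=\cL(x^{K+1},y^{K+1})-a\|x^{K+1}-x^{K+2}\|^2+b\|x^{K+1}-x^{K}\|^2$, the terms $f$ and $\iprod{y}{Ax}$ are continuous hence bounded on the compact set containing the bounded iterates, and the increment terms are bounded because $\{x^k\}$ is bounded; the only delicate quantity is $-h^*(y^{K+1})$. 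Here I would exploit $y^{k+1}=\prox_{h^*}^{M}(w^k)$ with $w^k:=y^k+M^{-1}A(2x^{k+1}-x^k)$ bounded: fixing any $\bar w\in\dom h^*$ (nonempty since $h^*$ is proper by Assumption~\ref{ass:setting}(iv)), the defining inequality of the proximal mapping yields $h^*(y^{k+1})\le h^*(\bar w)+\tfrac12\|\bar w-w^k\|_M^2$, which is uniformly bounded above. Thus $\{\sL(z^k)\}$ is bounded below, the $x$-increments are square-summable, and square-summability of $\{\|y^{k+1}-y^k\|\}$ follows by combining \eqref{eq:hat-lambda} with \eqref{eq:lambda-x} and summing.

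Statement (ii) is a standard consequence of boundedness: $\cC$ is nonempty by Bolzano--Weierstrass, closed as a set of cluster points, and bounded, hence compact, while $\dist((x^k,y^k),\cC)\to0$ follows by a routine subsequence contradiction argument. For (iii), I would fix $(x^*,y^*)\in\cC$ with $(x^{k_j},y^{k_j})\to(x^*,y^*)$ and pass to the limit, using that (i) forces $\|x^{k+1}-x^k\|\to0$ and $\|y^{k+1}-y^k\|\to0$. Rearranging \eqref{eq:iter-1} gives $\nabla f(x^{k})+A^Ty^{k}=(x^{k}-x^{k+1})/\alpha\to0$, so continuity of $\nabla f$ yields the first relation of \eqref{eq:optimal-condition} at $(x^*,y^*)$. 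For the second, I would read \eqref{eq:optimal-h} along $k=k_j-1$, namely $g^{k_j}=-M(y^{k_j}-y^{k_j-1})+A(2x^{k_j}-x^{k_j-1})\in\partial h^*(y^{k_j})$; since the right-hand side converges to $Ax^*$ while $y^{k_j}\to y^*$, the outer semicontinuity of $\partial h^*$ (valid as $h^*$ is proper, lsc and convex) gives $Ax^*\in\partial h^*(y^*)$, whence $(x^*,y^*)\in\crit\cL$.

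Finally, for (iv), the monotone sequence $\{\sL(z^k)\}$, being bounded below by the argument in (i), converges to a finite limit $\sL^*$; since its increment terms vanish, $\cL(x^k,y^k)\to\sL^*$ as well. It remains to identify $\cL(x^*,y^*)$ with $\sL^*$ at each cluster point, and the obstacle is precisely that $\cL$ is only upper semicontinuous because $h^*$ is merely lsc. I would therefore upgrade to continuity of $h^*$ along the subsequence: the subgradient inequality $h^*(y^*)\ge h^*(y^{k_j})+\iprod{g^{k_j}}{y^*-y^{k_j}}$, together with $g^{k_j}\to Ax^*$ and $y^{k_j}\to y^*$, gives $\limsup_j h^*(y^{k_j})\le h^*(y^*)$, while lower semicontinuity gives the reverse bound; hence $h^*(y^{k_j})\to h^*(y^*)$ and $\cL(x^{k_j},y^{k_j})\to\cL(x^*,y^*)$. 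Combined with $\cL(x^{k_j},y^{k_j})\to\sL^*$, this shows $\cL\equiv\sL^*$ on $\cC$, with $\sL^*$ finite. Both the boundedness-below step in (i) and this constancy step hinge on replacing continuity of $h^*$ by the proximal and subgradient inequalities, which is where I expect the real work to lie.
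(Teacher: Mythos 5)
Your proof is correct, and while it follows the same overall architecture as the paper (descent of $\sL$ via Lemma~\ref{lem:descent}, telescoping, cluster-point arguments, outer semicontinuity of $\partial h^*$), it differs in three technically substantive places, and in two of them your version is arguably tighter. First, to bound $\{\sL(z^k)\}$ below, the paper invokes Assumption~\ref{ass:setting}(ii) to get $\inf_k\cL(x^k,y^k)>-\infty$, whereas you bound $h^*(y^{k+1})$ from above directly via the defining inequality of $\prox_{h^*}^{M}$ at a fixed $\bar w\in\dom h^*$; given bounded iterates this needs only properness of $h^*$ (Assumption~\ref{ass:setting}(iv)) and shows the lower bound without routing through the dual function, which the paper's one-line deduction from Assumption (ii) quietly requires (one must know $\inf_x\cL(x,\cdot)$ is bounded below on the bounded set $\{y^k\}$, e.g.\ by concavity and finiteness). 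Second, for (iii) you pass to the limit directly in \eqref{eq:iter-1} and \eqref{eq:optimal-h}, using outer semicontinuity of $\partial h^*$ to get $Ax^*\in\partial h^*(y^*)$; the paper packages the same content through $d^{k_q}\in\partial\sL(z^{k_q})$, the bound of Lemma~\ref{lem:gradient}, outer semicontinuity of $\partial\sL$, and Lemma~\ref{lem:relation-crit} -- equivalent, but your route is more elementary and makes the paper's auxiliary lemmas unnecessary for this particular item. Third, and most notably, for (iv) the paper asserts that $h^*$, being proper lsc convex, is ``continuous over its domain''; this is false in general at boundary points of $\dom h^*$ in dimension $\ge 2$ (the standard example $h^*(x,y)=y^2/x$ for $x>0$, $h^*(0,0)=0$, $+\infty$ otherwise, is lsc convex but discontinuous at the origin relative to its domain), so the paper's deduction $\cL(x^{k_q},y^{k_q})\to\cL(\bar x,\bar y)$ has a genuine gap as written. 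Your replacement -- the subgradient inequality $h^*(y^*)\ge h^*(y^{k_j})+\iprod{g^{k_j}}{y^*-y^{k_j}}$ with $g^{k_j}\to Ax^*$, yielding $\limsup_j h^*(y^{k_j})\le h^*(y^*)$, paired with lower semicontinuity for the reverse inequality -- exploits exactly the extra information the algorithm provides (convergent subgradients along the sequence) and closes that gap rigorously; the same device would also repair the analogous step in Theorem~\ref{th:usual-sto}.
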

\begin{proof}
%Let $\sL(z^k):=\sL(z^k)$.
Assumption \ref{ass:setting}(ii) implies  $\inf_k\cL(x^k,y^k)>-\infty$, which, together with the boundedness of $\{x^k\}$, leads to  $\inf_k\sL(z^k)>-\infty$. Since the sequence $\{\sL(z^k)\}$ is nonincreasing (cf. Lemma \ref{lem:descent}) and bounded from below, $\sL(z^k)$ converges to a finite value denoted by $\bar{\sL}$. Summing \eqref{ieq:descent-result} over $k=1,\ldots,n$ yields that
\bee
c\sum_{k=1}^n (\|x^{k+1}-x^{k}\|^2+\|x^{k}-x^{k-1}\|^2)\leq \sL(z^1)-\sL(z^{n+1}).
\eee
Let $n\to\infty$,  by the convergence of $\{\sL(z^k)\}$ we have
\[
\sum_{k=1}^{\infty}\|x^{k+1}-x^k\|^2<\infty.
\]	
This, together with  \eqref{eq:lambda-x} and (\ref{eq:hat-lambda}), further gives
\[\sum_{k=1}^{\infty}\|y^{k+1}-y^k\|^2<\infty.
\]
Item (i) is derived. Moreover, it further implies
\be\label{eq:x-x}
\lim_{k\to\infty}\|x^{k+1}-x^{k}\|= 0 \text{ and }  \lim_{k\to\infty}\|y^{k+1}-y^{k}\|=0.
\ee

 The compactness of $\cC$ follows the proof of \cite[Lemma 5 (iii)]{BST2014}. Since the sequence $\{(x^k,y^k)\}$ is bounded, thus $\cC$ is nonempty and for any $(\bar{x},\bar{y})\in\cC$ there exists a subsequence $\{(x^{k_q},y^{k_q})\}$ of $\{(x^k,y^k)\}$ such that
\be\label{eq:limit-point}
\lim_{q\to\infty}\|x^{k_q}-\bar{x}\|= 0, \ \lim_{q\to\infty}\|y^{k_q}-\bar{y}\|=0.
\ee
By the definition of the distance function, we have
\[
\dist((x^k,y^k),\cC)\leq \|x^k-\bar{x}\|+\|y^k-\bar{y}\|\leq \|x^k-x^{k_q}\|+\|x^{k_q}-\bar{x}\|+\|y^k-y^{k_q}\|+\|y^{k_q}-\bar{y}\|.
\]
Combining this inequality with \eqref{eq:x-x} and \eqref{eq:limit-point}, we obtain the result that $\dist((x^k,y^k),\cC)$ converges to $0$ and hence item (ii) holds.

For item (iii), it is sufficient to prove $(\bar{x},\bar{y})\in \crit\cL$ for any $(\bar{x},\bar{y})\in\cC$. Let $\bar{z}:=(\bar{x},\bar{y},\bar{x},\bar{x})$.
Noting that $z^{k_q}\rightarrow\bar{z}$, $d^{k_q}\in\partial \sL(z^{k_q})$ and $d^{k_q}\rightarrow 0$ by Lemma \ref{lem:gradient}, we have from the outer semicontinuity of $\partial \sL$ that $0\in\partial \sL(\bar{z})$, i.e., $(\bar{x},\bar{y},\bar{x},\bar{x})\in\crit\sL$. Therefore, from Lemma \ref{lem:relation-crit}, it follows that $(\bar{x},\bar{y})\in\crit\cL$.

Recall from Remark \ref{rem:ass1} (iv)  that the conjugate function $h^*$ is proper, lower semicontinuous and convex.  Thus,  $h^*$ is continuous over its domain $\dom h^*$, as demonstrated in \cite[Theorem 2.22]{Beck2017}. Therefore, $\cL$ is continuous over $\X \times\dom h^*$
and hence
\[
\lim_{q\to\infty}\cL(x^{k_q},y^{k_q})=\cL(\bar{x},\bar{y}),
\]
which further implies
\be\label{eq:continuous-L}
\lim_{q\to\infty}\sL(z^{k_q})=\lim_{q\to\infty}(\cL(x^{k_q},y^{k_q})-a\|x^{k_q}-x^{k_q+1}\|^2+b\|x^{k_q}-x^{k_q-1}\|^2)=\cL(\bar{x},\bar{y})=\sL(\bar{z}).
\ee
In the proof of (i), we have shown that
\[
\lim_{k\to\infty} \sL(z^k)= \bar{\sL},
\]
which, together with \eqref{eq:continuous-L}, implies  $\cL(\bar{x},\bar{y})=\bar{\sL}$. Since $(\bar{x},\bar{y})$ is arbitrarily chosen in $\cC$, item (iv) is obtained.
\end{proof}

%%%--------------------------------------------

\subsection{Global convergence and rates under {\KL}  assumption}\label{subsec:KL}
In this subsection, we will establish the global convergence and convergence rates of Algorithm \ref{alg:PPDG} in the context of {\KL}  property, which has been extensively studied in recent years for  the convergence of algorithms for nonconvex optimization, see, e.g., \cite{ABRS2010,ABS2013,BST2014,BLPPR2017,BST2018,BN2020,LMQ2021}.

Given a proper lower semicontinuous function $f$ and real numbers $a,b$, let us denote $[a<f<b]:=\{x\in\X :a<f(x)<b\}$.
\begin{definition}\label{def:kl}
A proper lower semicontinuous function $f:\X \to(-\infty,+\infty]$ is said to have the \emph{Kurdyka-\L{}ojasiewicz (KL) property} at $\bar{x}\in\dom\partial f:=\{x\in\X :\partial f(x)\neq\emptyset\}$ if there exist $\eta\in(0,+\infty]$, a neighborhood $U$ of $\bar{x}$ and a continuous concave function $\varphi:[0,\eta)\to[0,+\infty)$ such that
\begin{itemize}
	\item[(i)] $\varphi(0)=0$;
	\item[(ii)] $\varphi$ is continuously differentiable and $\varphi'>0$ on $(0,\eta)$;
	\item[(iii)] for all $x\in U\cap [0<f-f(\bar{x})<\eta]$, the following {\KL}  inequality holds
\be\label{eq:KL-ineq}
	\varphi'(f(x)-f(\bar{x}))\cdot\dist(0,\partial f(x))\geq 1.
\ee
\end{itemize}
\end{definition}

A proper lower semicontinuous function $f$, which has the {\KL}  property at every point of $\dom\partial f$, is called a \emph{{\KL}  function}.  When $\varphi(s)=\sigma s^{1-\theta}$, $\sigma$ is a positive constant and $\theta\in[0,1)$,  $f$ is said to satisfy the \emph{\L{}ojasiewicz property} with  \emph{exponent} $\theta$. 	

\begin{remark}
%The origins of this notion go back to the pioneering work \cite{1963Une,1998Kurdyka}.
It is known that the {\KL}  property is automatically satisfied at any noncritical point $x\in\X $ with a concave function $\varphi(s)=\sigma s$ (see \cite[Section 3.2]{ABRS2010}).
A very wide class of functions, such as nonsmooth semialgebraic functions, real subanalytic functions, and functions definable in an $o$-minimal structure, satisfy the {\KL}  property.  In particular, for problem (\ref{eq:p}),  $\sL$ is considered a {\KL}  function if $f$ and $h$ are semialgebraic (or, $f$ is semialgebraic and  $h^*$ satisfies a growth condition, see  \cite[Section 5]{BST2014}). We refer the readers to \cite{AB2009, ABRS2010, BST2014, DDKL2020} for more properties and examples of {\KL}  functions.

%Let us point out that the {\KL}  inequality (\ref{eq:KL-abs}) is usually state without taking the absolute value of $f(x)-f(\bar{x})$. However, it is shown in \cite{LMQ2021} that (\ref{eq:KL-abs}) also holds for semialgebraic and subanalytic function.
\end{remark}

%The following uniformized {\KL}  property is established in \cite[Lemma 6]{BST2014}.
%\begin{lemma}\label{lem:uniformed-KLproperty}
%	Let $f:\X \to(-\infty,+\infty]$ be a proper and lower semicontinuous function. Suppose that $f$ is constant over a compact set $\cC$ and satisfies the {\KL}  property at every point of $\cC$. Then there exist constants $\varepsilon>0$, $\eta>0$ and a concave function $\varphi$ such that
%	\[
%	\varphi'(f(x)-f(\bar{x}))\cdot\dist(0,\partial f(x))\geq 1
%	\]	
%	for all $\bar{x}\in\cC$ and all $x$ in the following set
%	\[\{x\in\X :\dist(x,\cC)<\varepsilon\}\cap [f(\bar{x})<f<f(\bar{x})+\eta].\]
%\end{lemma}

We now establish the global convergence  of Algorithm \ref{alg:PPDG}.
\begin{theorem}\label{th:converge}
Suppose that  $\sL$ is a {\KL}  function. Let Assumption \ref{ass:setting} hold and the sequence $\{(x^k,y^k)\}$ generated by Algorithm \ref{alg:PPDG} be bounded, then $(x^k,y^k)$ converges to a critical point of $\cL$ and
\[
\sum_{k=1}^{\infty}\|x^{k+1}-x^k\|<\infty,\quad \sum_{k=1}^{\infty}\|y^{k+1}-y^k\|<\infty.
\]
\end{theorem}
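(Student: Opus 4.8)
The plan is to run the by-now-standard Kurdyka--\L{}ojasiewicz descent scheme of Attouch--Bolte--Svaiter, whose two structural ingredients are already in place: the sufficient-decrease estimate of Lemma \ref{lem:descent} and the subgradient bound of Lemma \ref{lem:gradient}, combined with the subsequence results of Theorem \ref{th:usual}. Throughout I work with the augmented iterate $z^k=(x^k,y^k,x^{k+1},x^{k-1})$ and recall that $d^k\in\partial\sL(z^k)$, so $\dist(0,\partial\sL(z^k))\le\|d^k\|$. First I would dispose of the trivial case. By Theorem \ref{th:usual}, $\sL(z^k)$ decreases monotonically to a finite limit $\bar\sL$, and by item (iv) (together with $\|x^{k+1}-x^k\|\to0$) the value $\cL$, and hence $\sL$, is constant $\equiv\bar\sL$ on the cluster set. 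Denote by $\Omega$ the set of cluster points of $\{z^k\}$; since $\|x^{k+1}-x^k\|\to0$ and $\|x^{k-1}-x^k\|\to0$, every element of $\Omega$ has the form $(\bar x,\bar y,\bar x,\bar x)$ with $(\bar x,\bar y)\in\cC$, so $\Omega$ is compact, $\dist(z^k,\Omega)\to0$, and $\sL\equiv\bar\sL$ on $\Omega$. If $\sL(z^{k_0})=\bar\sL$ for some $k_0$, the descent inequality forces $\|x^{k+1}-x^k\|=0$ for all $k\ge k_0$, the iterates are eventually constant, and both claims are immediate; hence I may assume $\sL(z^k)>\bar\sL$ for every $k$.

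Next I would invoke the uniformized KL property. Applying the Bolte--Sabach--Teboulle uniformization lemma to the compact set $\Omega$, on which $\sL$ is constant and at each point of which $\sL$ enjoys the KL property, produces $\varepsilon,\eta>0$ and a desingularizing function $\varphi$ such that, for all $k$ large enough that $\dist(z^k,\Omega)<\varepsilon$ and $\bar\sL<\sL(z^k)<\bar\sL+\eta$, the inequality $\varphi'(\sL(z^k)-\bar\sL)\,\dist(0,\partial\sL(z^k))\ge1$ holds (this also guarantees $\dist(0,\partial\sL(z^k))>0$, and therefore $\|d^k\|>0$). The core of the argument is then the per-step estimate. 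Concavity of $\varphi$ gives
\[
\varphi(\sL(z^k)-\bar\sL)-\varphi(\sL(z^{k+1})-\bar\sL)\ \ge\ \varphi'(\sL(z^k)-\bar\sL)\bigl(\sL(z^k)-\sL(z^{k+1})\bigr).
\]
Writing $\delta_k:=\|x^{k+1}-x^k\|$ and $\Delta_k:=\varphi(\sL(z^k)-\bar\sL)-\varphi(\sL(z^{k+1})-\bar\sL)$, and combining $\varphi'(\sL(z^k)-\bar\sL)\ge1/\|d^k\|$ (KL inequality plus $\dist(0,\partial\sL(z^k))\le\|d^k\|$), the decrease $\sL(z^k)-\sL(z^{k+1})\ge c(\delta_k^2+\delta_{k-1}^2)$ from Lemma \ref{lem:descent}, and the bound $\|d^k\|\le\gamma_1\delta_{k-1}+\gamma_2\delta_k$ from Lemma \ref{lem:gradient}, I obtain
\[
\Delta_k\ \ge\ \frac{c(\delta_k^2+\delta_{k-1}^2)}{\gamma_1\delta_{k-1}+\gamma_2\delta_k}\ \ge\ \frac{c}{2\max\{\gamma_1,\gamma_2\}}\,(\delta_k+\delta_{k-1}),
\]
where the last step uses $\delta_k^2+\delta_{k-1}^2\ge\frac12(\delta_k+\delta_{k-1})^2$ and $\gamma_1\delta_{k-1}+\gamma_2\delta_k\le\max\{\gamma_1,\gamma_2\}(\delta_k+\delta_{k-1})$.

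Since the left-hand side telescopes, $\sum_k\Delta_k\le\varphi(\sL(z^1)-\bar\sL)<\infty$, and therefore $\sum_k(\delta_k+\delta_{k-1})<\infty$, giving $\sum_k\|x^{k+1}-x^k\|<\infty$. Consequently $\{x^k\}$ is Cauchy and converges to some $\bar x$; feeding the summability of $\|x^{k+1}-x^k\|$ into \eqref{eq:lambda-x} and \eqref{eq:hat-lambda} bounds $\|y^{k+1}-y^k\|$ by a fixed multiple of $\|x^{k+1}-x^k\|+\|x^{k+2}-x^{k+1}\|$, whence $\sum_k\|y^{k+1}-y^k\|<\infty$ and $\{y^k\}$ converges to some $\bar y$. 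By Theorem \ref{th:usual}(iii) the limit satisfies $(\bar x,\bar y)\in\crit\cL$, completing the proof. I expect the main obstacle to be the bookkeeping around the uniformized KL inequality---verifying that $\Omega$ genuinely is a compact set on which $\sL$ is constant and over a neighborhood of which the desingularizing inequality holds uniformly---together with the algebraic passage from the mixed quotient $(\delta_k^2+\delta_{k-1}^2)/(\gamma_1\delta_{k-1}+\gamma_2\delta_k)$ to a clean bound on $\delta_k+\delta_{k-1}$; both are routine but must be handled with care precisely because $\sL$ couples the two-step-shifted iterates $x^{k+1}$ and $x^{k-1}$.
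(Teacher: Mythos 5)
Your proposal is correct and follows the same overall scheme as the paper: the same Lyapunov variable $z^k=(x^k,y^k,x^{k+1},x^{k-1})$, the same dispatch of the case $\sL(z^{k_0})=\bar\sL$, the same invocation of the uniformized {\KL} property on the cluster set of $\{z^k\}$, and the same combination of Lemma \ref{lem:descent}, Lemma \ref{lem:gradient} and concavity of $\varphi$ yielding the quotient $\cM_{k,k+1}\geq c(\delta_k^2+\delta_{k-1}^2)/\bigl(\gamma(\delta_k+\delta_{k-1})\bigr)$, followed by the passage to $\sum\|y^{k+1}-y^k\|<\infty$ via \eqref{eq:lambda-x} and \eqref{eq:hat-lambda}. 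Where you genuinely diverge is the algebraic step after the quotient: the paper extracts $\|x^{k+1}-x^k\|\leq\sqrt{\tfrac{\gamma}{c}\cM_{k,k+1}(\delta_{k-1}+\delta_k)}$ and applies a Young-type bound, which leaves a residual $\tfrac12(\delta_{k-1}-\delta_k)$ that must itself be telescoped (this is the standard device needed when the descent controls only $\delta_k^2$ while the subgradient bound involves $\delta_{k-1}$). You instead observe that here the descent controls \emph{both} $\delta_k^2$ and $\delta_{k-1}^2$, so the elementary inequality $\delta_k^2+\delta_{k-1}^2\geq\tfrac12(\delta_k+\delta_{k-1})^2$ bounds the quotient below by $\tfrac{c}{2\gamma}(\delta_k+\delta_{k-1})$ directly, and $\sum_k(\delta_k+\delta_{k-1})<\infty$ follows by pure telescoping of $\cM_{k,k+1}$ with no residual term. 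This is valid (the denominator is strictly positive at the relevant indices, as you note via the {\KL} inequality, and the bound holds trivially when $\delta_k=\delta_{k-1}=0$) and is a small but real simplification of the paper's bookkeeping. One index slip to repair: the {\KL} inequality is only available for $k\geq K_0$ and $\varphi$ is only defined on $[0,\eta)$, so the telescoped sum should be bounded by $\varphi(\sL(z^{K_0})-\bar\sL)$ rather than $\varphi(\sL(z^{1})-\bar\sL)$; summability of the full series then follows since finitely many initial terms are irrelevant.
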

\begin{proof}
In the proof of Theorem \ref{th:usual}, it has been shown that
\be\label{eq:cL-continuous}
\lim_{k\to\infty}\sL(z^{k})=\bar{\sL},
\ee
where $\bar{\sL}$ is the constant value of $\cL$ over $\cC$.

If there exists a number $l_0>0$ such that $\sL(z^{l_0})=\bar{\sL}$, which, together with Lemma \ref{lem:descent}, implies that  $\sL(z^k)=\bar{\sL}$  and $x^{k}=x^{k+1}$ for any $k\geq l_0$. By (\ref{eq:lambda-x}), we have $y^{k}=y^{k+1}$ for any $k\geq l_0$. Thus,  $(x^k,y^k)=(x^{k+1},y^{k+1})$ for any $k\geq l_0$, which proves the claim.

Otherwise, since the sequence $\{\sL(z^k)\}$ is nonincreasing by Lemma \ref{lem:descent}, it follows that
$\sL(z^k)>\bar{\sL}$  for any $k>0$.
Relation \eqref{eq:cL-continuous} ensures that for any $\eta>0$, there exists an integer $l_1>0$ such that
 \[
 \sL(z^k)<\bar{\sL}+\eta
 \]
 for any $k\geq l_1$. Let $\varOmega$ be the set of cluster points of $\{z^k\}$. By the same line as the proof of Theorem \ref{th:usual} (ii) and (iv), we have that the  function $\sL$ is constant on the nonempty compact set $\varOmega$ and  $\dist(z^k,\varOmega)\to 0$ as $k\to\infty$. Thus, for any $\varepsilon>0$, there exists $l_2>0$ such that for $k\geq l_2$,
\[
\dist(z^k,\varOmega)<\varepsilon.
\]
 %Obviously, for any $\varepsilon>0$, there exists $k_3>0$ such that $\dist((x^k,y^k,x^{k+1},x^{k-1}),\cC(x^0,y^0,x^0,x^0))<\varepsilon$ for $k\geq k_3$ where $\cC(x^0,y^0,x^0,x^0)$ is the set of  critical point of $(x^k,y^k,x^{k+1},x^{k-1})$.
Let $K_0:=\max\{l_1,l_2\}$. By the above discussion, one has that $z^k\in \{z:\dist(z,\varOmega)<\varepsilon\}\cap [\bar{\sL}<\sL<\bar{\sL}+\eta]$ for all $k\geq K_0$.
Since $\sL$ is a {\KL}  function, from the uniformized {\KL}  property (\cite[Lemma 6]{BST2014}),
there exists a continuous concave function $\varphi$ such  that for all $k\geq K_0$,
\be\label{eq:kl-phi}
\varphi'(\sL(z^k)-\bar{\sL})\cdot \dist(0,\partial\sL(z^k))\geq 1.
\ee
Using the concavity of $\varphi$ yields that
\be\label{eq:concav-phi}
\varphi(\sL(z^{k+1})-\bar{\sL})\leq \varphi(\sL(z^k)-\bar{\sL})
+ \varphi'(\sL(z^k)-\bar{\sL})\cdot(\sL(z^{k+1})-\sL(z^k)).
\ee
 Lemma \ref{lem:gradient} implies that
\be\label{eq:bounded-gradient-variant}
\dist(0,\partial\sL(z^k))\leq\gamma(\|x^{k}-x^{k-1}\|+\|x^{k+1}-x^k\|),
\ee
where $\gamma:=\max\{\gamma_1, \gamma_2\}$.
Combining \eqref{eq:kl-phi}, \eqref{eq:concav-phi}, \eqref{eq:bounded-gradient-variant}
with Lemma \ref{lem:descent}, we obtain that  $\cM_{m,n}:=\varphi(\sL(z^{m})-\bar{\sL})-\varphi(\sL(z^{n})-\bar{\sL})$ satisfies
\bee
\cM_{k,k+1}\geq\varphi'(\sL(z^k)-\bar{\sL})\cdot(\sL(z^k)- \sL(z^{k+1}))\geq \frac{\sL(z^k)- \sL(z^{k+1})}{\dist(0,\partial\sL(z^k))}
\geq\ \frac{c(\|x^{k+1}-x^{k}\|^2+\|x^{k}-x^{k-1}\|^2)}{\gamma(\|x^{k+1}-x^k\|+\|x^{k}-x^{k-1}\|)},
\eee
%Applying Lemma \ref{lem:lambda-x}, there must exist a constant $\gamma>0$ such that
%\be\label{eq:gamma2}
%\cM_{k,k+1}\geq\frac{c\|x^{k+1}-x^k\|^2}{\gamma(\|x^{k}-x^{k-1}\|+\|x^{k+1}-x^k\|)}.
%\ee
which is rewritten as
\be\label{eq:a2}
\|x^{k+1}-x^{k}\|^2+\|x^{k}-x^{k-1}\|^2\leq \frac{\gamma}{c}\cM_{k,k+1}{(\|x^{k}-x^{k-1}\|+\|x^{k+1}-x^k\|)}.
\ee
This further indicates that
\bee
\|x^{k+1}-x^k\|\leq\sqrt{\frac{\gamma}{c}\cM_{k,k+1}{(\|x^{k}-x^{k-1}\|+\|x^{k+1}-x^k\|)}}\leq\frac{\gamma}{c}\cM_{k,k+1}+\frac{1}{4}(\|x^{k}-x^{k-1}\|+\|x^{k+1}-x^k\|),
\eee
which is equivalent to
\[
\|x^{k+1}-x^k\|\leq \frac{2\gamma}{c}\cM_{k,k+1}+\frac{1}{2}(\|x^{k}-x^{k-1}\|-\|x^{k+1}-x^k\|).
\]
Summing up   from $k=K_0$ to $n$ with $n>K_0$, it follows  that
\be\label{eq:sum-x}
\sum_{k=K_0}^n\|x^{k+1}-x^k\|\leq \frac{2\gamma}{c}\cM_{K_0,n+1}+\frac{1}{2}\|x^{K_0}-x^{K_0-1}\|.
\ee
Similarly, from (\ref{eq:a2}) we also have
\be\label{eq:sum-x-1}
\sum_{k=K_0}^n\|x^{k}-x^{k-1}\|\leq \frac{2\gamma}{c}\cM_{K_0,n+1}+\frac{1}{2}\|x^{n+1}-x^{n}\|.
\ee
Summing \eqref{eq:sum-x} and \eqref{eq:sum-x-1}, we obtain
\be\label{eq:sum-x-2}
\begin{aligned}
\sum_{k=K_0}^n(\|x^{k+1}-x^k\|+\|x^{k}-x^{k-1}\|)& \leq \frac{4\gamma}{c}\cM_{K_0,n+1}+\frac{1}{2}\|x^{K_0}-x^{K_0-1}\|+\frac{1}{2}\|x^{n+1}-x^{n}\|\\
& \leq \frac{4\gamma}{c}\varphi(\sL(z^{K_0})-\bar{\sL})+\frac{1}{2}\|x^{K_0}-x^{K_0-1}\|+\frac{1}{2}\|x^{n+1}-x^{n}\|,
\end{aligned}
\ee
where the second inequality follows from the fact that $\varphi>0$ over $(0,\eta)$. Let $n\to\infty$ in \eqref{eq:sum-x-2}, by the first term of \eqref{eq:x-x} we have
\[
\sum_{k=K_0}^{\infty}\|x^{k+1}-x^k\|<+\infty,
\]
which, together with  (\ref{eq:lambda-x}),  implies
\[
\sum_{k=K_0}^{\infty}\|y^{k+1}-y^k\|<+\infty.
\]
These two inequalities imply that $(x^k,y^k)$ is a Cauchy sequence by the same line of analysis as \cite[Theorem 1 (ii)]{BST2014}. Thus, the sequence $(x^k,y^k)$ converges to a limit $(\bar{x},\bar{y})$ that is a critical point of $\cL$ by Theorem \ref{th:usual} (iii).
\end{proof}

%In the above proposition, we need the assumption  that the auxiliary function $\sL$ is a {\KL}  function which is satisfied in many examples, such as, the functions $f$ and $h$ are  semialgebraic functions or the functions definable in an o-minimal structure. For detailed definitions and  properties of these functions, we refer the reader to \cite[Section 4.3]{ABRS2010}.

The convergence rates of the sequence $\{(x^k,y^k)\}$ in the context of \L{}ojasiewicz exponent are provided
in the following theorem which is proved in an analogous way as \cite{AB2009}.
\begin{theorem}\label{th:convergence-rate1}
Assume that the sequence $\{(x^k,y^k)\}$ is bounded %and $(\bar{x},\bar{y})$ is its a limit point.
and  $\sL$ is a {\KL}  function with the \L{}ojasiewicz exponent $\theta$. %$\varphi(s)=\sigma s^{1-\theta}$, $\theta\in[0,1)$.
Let $(\bar{x},\bar{y})$ be the limit of $(x^k,y^k)$.
Then, under Assumption \ref{ass:setting} the following estimations hold:
\begin{itemize}
  \item[(i)] if $\theta=0$, the sequence $\{(x^k,y^k)\}$ converges in finite steps;
\item[(ii)] if $\theta\in(0,\frac{1}{2}]$, then there exist constants $\nu>0$, $0<\tau<1$ and a positive  integer $K$ such that  for $k\geq K$,
  \[\|x^{k}-\bar{x}\|\leq \nu\tau^{k-K}, \quad  \|y^{k}-\bar{y}\|\leq \nu'\tau^{k-K}, \]
  where $\nu':=\nu(1/\alpha+L)/\hat{\lambda}$ and $\hat{\lambda}$ is given in (\ref{eq:hat-lambda});
  \item[(iii)] if $\theta\in(\frac{1}{2},1)$, then there exist a constant $\mu>0$  and a positive integer $\bar{K}$ such that  for $k\geq \bar{K}$,
  \[\|x^{k}-\bar{x}\|\leq \mu k^{-\frac{1-\theta}{2\theta-1}}, \quad \|y^{k}-\bar{y}\|\leq \mu' k^{-\frac{1-\theta}{2\theta-1}},\]
  where $\mu':=\mu(1/\alpha+L)/\hat{\lambda}$.
\end{itemize}
\end{theorem}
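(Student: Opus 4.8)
The plan is to convert the two structural facts already in hand—the sufficient-decrease inequality of Lemma \ref{lem:descent} and the subgradient bound of Lemma \ref{lem:gradient}—into a single scalar recursion for the nonnegative error $r_k := \sL(z^k)-\bar{\sL}$, and then read off the three regimes from the \L ojasiewicz exponent $\theta$. Writing $\varphi(s)=\sigma s^{1-\theta}$ so that $\varphi'(s)=\sigma(1-\theta)s^{-\theta}$, the \L ojasiewicz inequality \eqref{eq:kl-phi} combined with $\dist(0,\partial\sL(z^k))\le\gamma(\|x^k-x^{k-1}\|+\|x^{k+1}-x^k\|)$ from Lemma \ref{lem:gradient} yields $r_k^\theta\le C_1(\|x^k-x^{k-1}\|+\|x^{k+1}-x^k\|)$ for a constant $C_1$; feeding this into Lemma \ref{lem:descent} (after the elementary inequality $s^2+t^2\ge\tfrac12(s+t)^2$) produces the master recursion $r_k-r_{k+1}\ge C_2\,r_k^{2\theta}$, which is exactly the setting of \cite{AB2009}.

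For (i), $\theta=0$ makes $\varphi'$ a positive constant, so \eqref{eq:kl-phi} forces $\dist(0,\partial\sL(z^k))\ge 1/\sigma$; since Lemma \ref{lem:gradient} together with \eqref{eq:x-x} gives $\dist(0,\partial\sL(z^k))\to 0$, the iterate must leave the K\L{} regime, which—because $z^k\to\bar{z}$—can only happen if $r_k=0$ for all large $k$. Combined with the dichotomy already used at the start of the proof of Theorem \ref{th:converge}, this forces $(x^k,y^k)$ to be eventually constant, i.e.\ finite termination.

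For the remaining cases I would work through the tail sum $T_k:=\sum_{i=k}^\infty\|x^{i+1}-x^i\|$, which dominates both $\|x^k-\bar{x}\|$ and each increment $\|x^{k+1}-x^k\|$. Summing the per-step estimate already derived inside the proof of Theorem \ref{th:converge} gives $T_k\le\frac{2\gamma}{c}\varphi(r_k)+\frac12\|x^k-x^{k-1}\|$. When $\theta\in(0,\tfrac12]$ the exponent $(1-\theta)/\theta\ge 1$ lets me bound $\varphi(r_k)$ by a constant multiple of $\|x^k-x^{k-1}\|+\|x^{k+1}-x^k\|=T_{k-1}-T_{k+1}$ (for $k$ large, where these increments lie below $1$); substituting and using the monotonicity of $T_k$ yields a contraction $T_{k+1}\le\rho\,T_{k-1}$ with $\rho\in(0,1)$, hence geometric decay $T_k\le\nu\tau^{k-K}$, from which $\|x^k-\bar{x}\|\le\nu\tau^{k-K}$ and $\|x^{k+1}-x^k\|\le\nu\tau^{k-K}$ follow at once.

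The genuinely delicate case is (iii), $\theta\in(\tfrac12,1)$, where $2\theta>1$ and the master recursion only yields polynomial decay. Here I would invoke the standard sequence lemma of \cite{AB2009} (a discrete comparison with the flow $\dot s=-Cs^{2\theta}$) to obtain $r_k=O(k^{-1/(2\theta-1)})$, whence $\varphi(r_k)=O(k^{-(1-\theta)/(2\theta-1)})$ and, via the tail-sum bound above, $T_k=O(k^{-(1-\theta)/(2\theta-1)})$; this is the step I expect to be the main obstacle, both because of the bookkeeping needed to absorb the $\tfrac12\|x^k-x^{k-1}\|$ term and because the two-step structure $T_{k-1}-T_{k+1}$ must be shown not to degrade the exponent. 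Finally, in both (ii) and (iii) the rate for $y$ comes not from summing dual increments but from the identity $A^T(y^k-\bar{y})=\frac{x^k-x^{k+1}}{\alpha}-(\nabla f(x^k)-\nabla f(\bar{x}))$, obtained from \eqref{eq:iter-1} and $A^T\bar{y}=-\nabla f(\bar{x})$; combining $L$-smoothness with \eqref{eq:hat-lambda} gives $\|y^k-\bar{y}\|\le\hat{\lambda}^{-1}\big(\tfrac1\alpha\|x^{k+1}-x^k\|+L\|x^k-\bar{x}\|\big)$, and substituting the $x$-rates produces the stated bounds with $\nu'=\nu(1/\alpha+L)/\hat{\lambda}$ and $\mu'=\mu(1/\alpha+L)/\hat{\lambda}$.
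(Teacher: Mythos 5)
Your proposal is correct. For parts (i) and (ii) it is essentially the paper's own argument under different bookkeeping: your tail sum $T_k=\sum_{i\geq k}\|x^{i+1}-x^i\|$ and the two-step difference $T_{k-1}-T_{k+1}$ are exactly the paper's $\Delta_k=\sum_{q\geq k}(\|x^{q+1}-x^q\|+\|x^{q}-x^{q-1}\|)$ and $\Delta_k-\Delta_{k+1}$ rearranged; the paper obtains the one-step contraction $\Delta_{k+1}\leq\tau\Delta_k$ while your contraction $T_{k+1}\leq\rho T_{k-1}$ is two-step, which still gives a geometric rate with ratio $\sqrt{\rho}$, and your case (i) is the same contradiction as the paper's (KL forces $\dist(0,\partial\sL(z^k))\geq 1/\sigma$ while the subgradient bound and \eqref{eq:x-x} force it to $0$). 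Where you genuinely diverge is part (iii) and the dual estimates. The paper never derives a rate for the function values: it feeds the KL inequality into the tail-sum estimate to get $\Delta_{k+1}\leq(\gamma'+\frac12)(\Delta_k-\Delta_{k+1})^{\frac{1-\theta}{\theta}}$ and runs the Attouch--Bolte discrete argument directly on $\Delta_k$, producing $(\Delta_{k+1})^{\nu_1}-(\Delta_k)^{\nu_1}\geq\mu_1$ with $\nu_1=\frac{1-2\theta}{1-\theta}<0$ and hence $\Delta_n\leq\mu n^{1/\nu_1}$. You instead establish the master recursion $r_k-r_{k+1}\geq C_2 r_k^{2\theta}$ for $r_k=\sL(z^k)-\bar{\sL}$, get $r_k=O(k^{-1/(2\theta-1)})$ from the comparison lemma, and transfer through $T_k\leq\frac{2\gamma}{c}\varphi(r_k)+\frac12\|x^k-x^{k-1}\|$; this is equally valid, requires the comparison lemma the paper avoids, and buys the function-value rate as a by-product. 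Likewise your dual bound via the identity $A^T(y^k-\bar{y})=\frac{x^k-x^{k+1}}{\alpha}-(\nabla f(x^k)-\nabla f(\bar{x}))$, using $A^T\bar{y}=-\nabla f(\bar{x})$ at the critical limit, is arguably cleaner than the paper's route, which sums dual increments, $\|y^k-\bar{y}\|\leq\sum_{q\geq k}\|y^{q+1}-y^q\|\leq\frac{1/\alpha+L}{\hat{\lambda}}\Delta_k$, via \eqref{eq:lambda-x}; both yield the stated constants $\nu'$ and $\mu'$.

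The one step you flag as a possible obstacle in (iii) does close, and easily: by Lemma \ref{lem:descent}, $\|x^k-x^{k-1}\|\leq\sqrt{(r_{k-1}-r_k)/c}\leq\sqrt{r_{k-1}/c}=O\bigl(k^{-\frac{1}{2(2\theta-1)}}\bigr)$, and since $\theta>\frac12$ means $\frac12>1-\theta$, this decays strictly faster than the target $k^{-\frac{1-\theta}{2\theta-1}}$, so the leftover increment is absorbed without touching the exponent. The two-step structure you worry about plays no role in (iii) at all---your route there uses $\varphi(r_k)$ directly, not $T_{k-1}-T_{k+1}$; it matters only in (ii), where it costs at most a square root in the contraction ratio, not in the exponent.
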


\begin{proof}
Consider $\theta=0$, let $K_1:=\max\{k\in\N:x^{k+1}\neq x^k\}$. We now show that $K_1$ is a finite number.  On the contrary, we assume that $K_1$ is sufficiently
large such that (\ref{eq:kl-phi}) holds for all $k\geq K_1$. Note that $\varphi(s)=\sigma s$ when $\theta=0$, then (\ref{eq:kl-phi}) and (\ref{eq:bounded-gradient-variant}) read
\[
\gamma(\|x^{k}-x^{k-1}\|+\|x^{k+1}-x^k\|)\geq\dist(0,\partial\sL(z^k))\geq\frac{1}{\sigma},\  k\geq K_1,
\]
which, together with Lemma \ref{lem:descent} and $a^2+b^2\geq (a+b)^2/2$, yields that
\[
\sL(z^{k+1})\leq \sL(z^{k})-c(\|x^{k+1}-x^k\|^2+\|x^{k}-x^{k-1}\|^2)\leq \sL(z^{k})-\frac{c}{2\gamma^2\sigma^2}.
\]
Let $k\to\infty$. In the proof of Theorem \ref{th:usual}, it has been shown that $\lim_{k\to\infty} \sL(z^k)= \bar{\sL}=\cL(\bar{x},\bar{y})$, consequently,
\[
\cL(\bar{x},\bar{y})\leq \cL(\bar{x},\bar{y})-\frac{c}{2\gamma^2\sigma^2},
\]
which is contradictory. Therefore, $K_1$ is a finite number and $\{x^k\}$ converges in finite steps. From \eqref{eq:lambda-x} and \eqref{eq:hat-lambda}, we attain
\[
\|y^{k+1}-y^k\|\leq \frac{1/\alpha+L}{\hat{\lambda}}\|x^{k+1}-x^k\|+\frac{1}{\alpha\hat{\lambda}}\|x^{k+2}-x^{k+1}\|\leq \frac{1/\alpha+L}{\hat{\lambda}}\left(\|x^{k+1}-x^k\|+\|x^{k+2}-x^{k+1}\|\right).
\]
Hence, $\{y^k\}$ also converges in finite steps and item (i) holds.

Let $\Delta_k:=\sum_{q=k}^{\infty}\|x^{q+1}-x^q\|+\|x^{q}-x^{q-1}\|$.
The results in Theorem \ref{th:converge} state that $\Delta_k<+\infty$ for any $k\geq 1$ and the sequence $\{(x^k,y^k)\}$ converges to $(\bar{x},\bar{y})$ that is a critical point of $\cL$.  The triangle inequality implies that  $ \|x^{k}-\bar{x}\|\leq \Delta_{k}$
and
\[
\|y^k-\bar{y}\|\leq\sum_{q=k}^{\infty}\|y^{q+1}-y^{q}\|\leq \frac{1/\alpha+L}{\hat{\lambda}}\Delta_{k}.
\]
Therefore, it is sufficient to establish the estimations in (ii) and (iii) for $\Delta_k$.
If $\Delta_{k}=0$ for some $k$, it follows that $\|x^{q+1}-x^q\|=0$ for $q\geq k$ and $\{(x^k,y^k)\}$ converges in finite steps. Thus, without loss of generality we  assume $\Delta_{k}>0$ for any $k\geq 1$. %Since $z^k\to\bar{z}$, thus for any $\varepsilon>0$, there exists a sufficiently large integer $K_1>0$ such that $z^k\in \{z:\dist(z,\bar{z})<\varepsilon\}$ for $k\geq K_1$.Setting $K=\max\{K_0,K_1\}$, then

For $\theta\in(0,1)$, noting that $\varphi(s)=\sigma s^{1-\theta}$, letting $n\to\infty$ in \eqref{eq:sum-x-2} and using (\ref{eq:kl-phi}), we have
\[
  \begin{aligned}
\Delta_{k+1}\leq\Delta_k&\leq\frac{4\gamma\sigma}{c}(\sL(z^k)-\cL(\bar{x},\bar{y}))^{1-\theta}+\frac{1}{2}\|x^{k}-x^{k-1}\|\\
  &\leq \frac{4\gamma\sigma^{\frac{1}{\theta}}}{c}((1-\theta) \dist(0,\partial\sL(z^k)))^\frac{1-\theta}{\theta}+\frac{1}{2}\|x^{k}-x^{k-1}\|
  \end{aligned}
\]
  for any $k\geq K_0$.
The above inequality, together with the definition of $\Delta_k$ and \eqref{eq:bounded-gradient-variant}, yields that
 \be\label{eq:trans}
 \begin{aligned}
 \Delta_{k+1}\leq\ & \frac{4\gamma\sigma^{\frac{1}{\theta}}}{c}[\gamma(1-\theta)(\Delta_{k}-\Delta_{k+1})]^\frac{1-\theta}{\theta}+\frac{1}{2}(\Delta_{k}-\Delta_{k+1})\\
 =\ & \gamma'(\Delta_{k}-\Delta_{k+1})^\frac{1-\theta}{\theta}+\frac{1}{2}(\Delta_{k}-\Delta_{k+1}),
 \end{aligned}
 \ee
where $\gamma':=\frac{4}{c}(\gamma\sigma)^\frac{1}{\theta}(1-\theta)^\frac{1-\theta}{\theta}$.

Consider $\theta\in(0,\frac{1}{2}]$. Noting that $0<\Delta_{k}-\Delta_{k+1}< 1$ when $k\geq K$ and $K\geq K_0$ is large enough. Then, from \eqref{eq:trans} and $\frac{1-\theta}{\theta}\geq 1$ it follows
\[
 \Delta_{k+1}\leq (\gamma'+\frac{1}{2})(\Delta_{k}-\Delta_{k+1}).
\]
%\bee
%\Delta_{k+1}\leq(2\gamma'+\frac{1}{2})(\Delta_{k}-\Delta_{k+1}).
%\eee
By rearranging  the above inequality and setting $\tau:=(\gamma'+\frac{1}{2})/(\gamma'+\frac{3}{2})<1$, one has
\[
\Delta_{k+1}\leq \tau \Delta_{k}.
\]
 Therefore, for any $k\geq K$, it holds
\[
\Delta_{k}\leq\nu \tau^{k-K},
\]
where $\nu:=\Delta_{K}$  is a finite number. Item (ii) is derived.

Consider $\theta\in(\frac{1}{2},1)$. Let $\bar{K}\geq K_0$ be large enough such that $0<\Delta_{k}-\Delta_{k+1}< 1$ for all $k\geq \bar{K}$.
 Noting that $0<\frac{1-\theta}{\theta}<1$, we have from
\eqref{eq:trans} that
\[
\Delta_{k+1}\leq (\gamma'+\frac{1}{2})(\Delta_{k}-\Delta_{k+1})^\frac{1-\theta}{\theta}
\]
for all $k\geq\bar{K}$.
Following the same line  of the proof of \cite[(14)]{AB2009}, there exists  a constant $\mu_1>0$ such that for all $k\geq \bar{K}$,
\[
(\Delta_{k+1})^{\nu_1}-(\Delta_{k})^{\nu_1}\geq\mu_1,
\]
where $\nu_1:=(1-2\theta)/(1-\theta)<0$.
Summing up from $k=\bar{K}$ to $n$ for any $n\geq \bar{K}$ yields that
\[
(\Delta_{n})^{\nu_1}\geq (n-\bar{K})\mu_1+(\Delta_{\bar{K}})^{\nu_1},
\]
which, together with $\nu_1<0$, implies that for  any $n\geq \bar{K}$,
\[
\Delta_{n}\leq [(n-\bar{K})\mu_1+(\Delta_{\bar{K}})^{\nu_1}]^{\frac{1}{\nu_1}}\leq \mu n^{\frac{1}{\nu_1}},
\]
where $\mu$ is a positive constant. Item (iii) is obtained.
\end{proof}

%---------------------------------------------------------------------------------

\section{SPPDG for nonconvex finite-sum optimization}\label{sec:finite-sum}
In this section, we consider to solve the nonconvex  finite-sum optimization problem \eqref{eq:p-finite-sum}. By combining Algorithm \ref{alg:PPDG} with certain stochastic gradient estimator, we present a stochastic variant of PPDG, named as SPPDG,
and  establish its  almost sure convergence as well as convergence rates.

\subsection{The SPPDG algorithm}

\begin{algorithm2e}[htp]
\caption{SPPDG }
\label{alg:SPPDG}
\lnlset{alg:S1}{1}{Initialization: ~Choose an initial point $(x^0,y^0) \in \X\times\Y^*$, a constant $\alpha>0$ and a positive definite matrix $M$. }
\\ \vspace{.5ex}
\lnlset{alg:S2}{2}\For{$k=0,1,2,\ldots$}{
\lnlset{alg:S3}{3}{Update $x^{k},y^{k}$ as follows,
\begin{subnumcases}{\label{eq:iter-sto}}
	x^{k+1}=x^k-\alpha(\Sto f_k+A^Ty^k), \label{eq:iter-sto-1}\\
	\notag\\
	y^{k+1}=\prox_{h^*}^{M}(y^k+M^{-1}A(2x^{k+1}-x^k)),\label{eq:iter-sto-2}
\end{subnumcases}
where $\Sto f_k$ is a stochastic gradient estimator of $\nabla f(x^k)$.
} \\ \vspace{.5ex}
\lnlset{alg:S4}{4}{Set $k\leftarrow k+1$.}
\\ }
\end{algorithm2e}

In Algorithm \ref{alg:SPPDG}, we summarize the detail of SPPDG for the nonconvex  finite-sum optimization problem
\[
\min_{x\in \X }\ f(x)+h(Ax),
\]
where
\[
f(x):=\frac{1}{N}\sum_{i=1}^N f_i(x).
\]
In many applications, the number of components $N$ can be very large, which  makes the computation of the full gradient  $\nabla f(x)=\frac{1}{N}\sum_{i=1}^N \nabla f_i(x)$  challenging. To circumvent this difficulty, we apply the stochastic gradient estimator $\Sto f_k$ to approximate $\nabla f(x^k)$ in \eqref{eq:iter-sto-1}.
Hence, Algorithm \ref{alg:SPPDG} can be viewed as a stochastic approximate variant of Algorithm \ref{alg:PPDG}.

Let $\cF_k$ be the $\sigma$-field generated by the random variables of the first $k$ iterations of Algorithm \ref{alg:SPPDG} and $\Exp_k$ be the expectation conditioned on $\cF_k$. Clearly, the iterate $(x^k,y^k)$  is $\cF_k$-measurable since $x^k$ and $y^k$ are both dependent on the random information $\{\Sto f_0, \Sto f_1,\ldots,\Sto f_{k-1}\}$ of the first $k$ iterations.

In this paper, we will mainly focus on the variance reduced stochastic gradient estimator $\Sto f_k$ which is formally defined in \cite{BLZ2021,DTLDS2021}.
\begin{definition}\label{def:variance-reduced}
The stochastic gradient estimator $\Sto f_k$ is said to be \emph{variance reduced}, if  there exist  constants $\sigma_1, \sigma_2, \sigma_{\Lambda}>0$, $\rho\in(0,1]$, and $\cF_k$-measurable nonnegative random variables $\Lambda_1^k, \Lambda_2^k$ of the form $\Lambda_1^k=\sum_{i=1}^t(v_k^i)^2, \Lambda_2^k=\sum_{i=1}^tv_k^i$ for some nonnegative random variables $v_k^i\in\R$, such that for any $k\geq 1$:
\begin{itemize}
	\item[(i)] 	The estimator $\Sto f_k$ satisfies %with $\Lambda_1^k=\sum_{i=1}^{s} \|\xi_i^k\|^2$ and $\Lambda_2^k=\sum_{i=1}^{s}\|\xi_i^k\|$ where $\xi_i^k, i=1,\cdots,s$ are some random variables
	\be\label{eq:sto-mse}
	\Exp_k[\|\Sto f_k-\nabla f(x^k)\|^2]\leq \Lambda_1^k+\sigma_1(\Exp_k[\|x^{k+1}-x^k\|^2]+\|x^k-x^{k-1}\|^2)
	\ee
and
	\be\label{eq:sto-mse-2}
	\Exp_k[\|\Sto f_k-\nabla f(x^k)\|]\leq \Lambda_2^k+\sigma_2(\Exp_k[\|x^{k+1}-x^k\|]+\|x^k-x^{k-1}\|).
	\ee
	\item[(ii)] The sequence $\{\Lambda_1^k\}$ decays geometrically
	\be\label{eq:geometric-decay}
	\Exp_k[\Lambda_1^{k+1}]\leq (1-\rho)\Lambda_1^k+\sigma_{\Lambda}(\Exp_k[\|x^{k+1}-x^k\|^2]+\|x^k-x^{k-1}\|^2).
	\ee
	\item[(iii)] If $\{x^k\}$ satisfies $\lim_{k\to\infty}\Exp[\|x^k-x^{k-1}\|^2]=0$, then $\Exp[\Lambda_1^k]\to 0$ and $\Exp[\Lambda_2^k]\to 0$ as $k\to \infty$.
\end{itemize}	
\end{definition}
\begin{remark}
A variety of popular stochastic gradient estimators satisfy the conditions in Definition \ref{def:variance-reduced}, for example, SAGA, SARAH, SAG and SVRG. Combining \eqref{eq:sto-mse} and \eqref{eq:geometric-decay}, for any $k\geq 1$ we have the following bound,
\be\label{eq:cor-variance-reduced}
\Exp_k[\|\Sto f_k-\nabla f(x^k)\|^2]\leq \frac{1}{\rho}(\Lambda_1^k- \Exp_k[\Lambda_1^{k+1}])+\kappa(\Exp_k[\|x^{k+1}-x^k\|^2]+\|x^k-x^{k-1}\|^2),
\ee
where
\[
\kappa:=\sigma_1+\frac{\sigma_{\Lambda}}{\rho}.
\]
The readers are referred to \cite{BLZ2021,DTLDS2021} for a detailed description on examples and properties of the variance reduced stochastic gradient estimator.
\end{remark}

In the rest of this section, we assume that $\Sto f_k$ in Algorithm \ref{alg:SPPDG} is some variance reduced gradient estimator satisfying the conditions of Definition \ref{def:variance-reduced}, and let $M$  be the matrix associated with $\alpha AA^T$.
We shall analyze the convergence of $(x^k,y^k)$ generated by Algorithm \ref{alg:SPPDG} under the following assumption.
\begin{assumption} \label{ass:setting-sto}
The assumption is the same as Assumption \ref{ass:setting} except that  Assumption \ref{ass:setting}(i) is replaced by:
 the functions $f_i$, $i=1,\cdots,N$ are $L$-smooth; and $\cL$ in Assumption \ref{ass:setting}(ii) is replaced by $\cL_s$ with
 \[
 \cL_s(x,y)=\frac{1}{N}\sum_{i=1}^N f_i(x)+\langle y,Ax\rangle-h^*(y).
 \]
\end{assumption}

 Unsurprisingly, we will observe that part of the convergence analysis of Algorithm \ref{alg:SPPDG} will be performed similarly with that of Algorithm \ref{alg:PPDG} in Section \ref{sec:composite}. Without any confusion, some notations in Section \ref{sec:composite}  will be repeatedly used in this section.

\subsection{Auxiliary lemmas}
Let us first define the following Lyapunov function
\be\label{eq:def-sto-aux}
\sL_s(x,y,u,v,w):=\cL_s(x,y)-a\|x-u\|^2+b\|x-v\|^2+c\|v-w\|^2,\ \forall x,u,v,w\in\X,\ y\in\Y^*.
\ee
Here, $a,b,c$ are constants given by
\[a:=e_0+\frac{2\delta_2}{\alpha}+2\delta_2\alpha\kappa,\ b:=e_0+\frac{9\alpha \kappa}{2\delta_2}+2\delta_2\alpha\kappa+\frac{\kappa}{2\delta_1}+\frac{3\alpha L^2}{2\delta_2},\ c:=\frac{3\alpha \kappa}{2\delta_2},\]
where
\be\label{eq:e}
e_0:=\frac{1}{3\alpha}-\frac{\delta_1+L}{6}-\frac{\kappa}{3\delta_1}-\frac{4\delta_2L}{3}-\frac{4\delta_2}{3\alpha}-\frac{2\delta_2\alpha L^2}{3}-\frac{\alpha L^2}{2\delta_2}-\frac{2\alpha\kappa}{\delta_2}-\frac{8\delta_2\alpha\kappa}{3}.
\ee
In addition, $\delta_1, \delta_2>0$ are properly selected constants such that $e_0>0$. 

In this subsection, we mainly aim to develop the decent property  corresponding to the Lyapunov function $\sL_s$  in expectation. 
Following the same line as the proof of Lemma \ref{lem:relation-crit}, we firstly derive the relation between $\crit\cL_s$ and $\crit\sL_s$.
\begin{lemma}\label{lem:crit-relation-sto}
For any $x,u,v,w\in\X $, $y\in\Y^* $, $(x,y,u,v,w)\in\crit\sL_s$ if and only if
$u=v=w=x$ and $(x,y)\in\crit\cL_s$.
\end{lemma}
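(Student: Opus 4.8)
The plan is to imitate the proof of Lemma \ref{lem:relation-crit} almost verbatim, since $\sL_s$ differs from $\sL$ only by the extra slot $w$ and the additional smooth quadratic penalty $c\|v-w\|^2$. First I would write out the stationarity system $0\in\partial\sL_s(x,y,u,v,w)$ blockwise. Because $\cL_s$ is $C^1$ in $x$ (each $f_i$ is $L$-smooth and $\langle y,Ax\rangle$ is linear in $x$) and all the coupling terms are quadratic, the function $\sL_s$ is continuously differentiable in $x,u,v,w$, and the only nonsmoothness in $y$ comes from $-h^*(y)$. Hence the subdifferential separates: the $y$-block gives $0\in\partial_y\cL_s(x,y)=Ax-\partial h^*(y)$, while every other block reduces to an ordinary gradient equation.

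Concretely, I would record the gradient identities
\[
\nabla_x\sL_s=\nabla_x\cL_s(x,y)-2a(x-u)+2b(x-v),\quad
\nabla_u\sL_s=2a(x-u),
\]
\[
\nabla_v\sL_s=2b(v-x)+2c(v-w),\quad
\nabla_w\sL_s=2c(w-v),
\]
together with $0\in\partial_y\cL_s(x,y)$. The key step is then to unwind these equations in the correct order, exploiting that $a,b,c>0$ (guaranteed by choosing $\delta_1,\delta_2$ so that $e_0>0$). Since $a>0$, the $u$-equation forces $u=x$; since $c>0$, the $w$-equation forces $w=v$; substituting $w=v$ into the $v$-equation annihilates the $c$-term and, because $b>0$, yields $v=x$, so $w=v=x$ as well. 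With $u=v=x$ the two penalty corrections in $\nabla_x\sL_s$ vanish, so the $x$-equation collapses to $\nabla_x\cL_s(x,y)=0$; combining this with the $y$-block gives exactly $(0,0)\in\partial\cL_s(x,y)$, i.e. $(x,y)\in\crit\cL_s$. The converse is immediate, since substituting $u=v=w=x$ into the gradient identities kills every penalty term, so $(x,y)\in\crit\cL_s$ makes all blocks vanish.

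I do not expect any genuine obstacle here. The only point that demands a moment of care, and the sole respect in which this differs from Lemma \ref{lem:relation-crit}, is the coupling of $v$ and $w$ through $c\|v-w\|^2$: one must resolve the $w$-equation \emph{before} the $v$-equation so that the extra quadratic term drops out cleanly; resolving them in the opposite order would leave $v$ and $w$ entangled. Positivity of $a,b,c$ is what turns each elimination into an equivalence rather than a one-way implication, which is precisely what the \textbf{if and only if} statement requires.
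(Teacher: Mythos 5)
Your proof is correct and is exactly the argument the paper intends: the paper omits the details, stating only that the lemma "follows the same line as the proof of Lemma \ref{lem:relation-crit}", and your blockwise stationarity system with elimination of $u$, $w$, then $v$ via $a,b,c>0$ is precisely that line, correctly adapted to the extra variable $w$ and the coupling term $c\|v-w\|^2$. (Minor note: $c=\frac{3\alpha\kappa}{2\delta_2}>0$ holds regardless of the sign of $e_0$, so only $a>0$ and $b>0$ lean on the choice making $e_0>0$ — but this does not affect the argument.)
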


The following lemma gives a connection between the two sequences $\{y^k\}$ and $\{x^k\}$.
\begin{lemma}\label{lem:lambda-x-sto}
Suppose Assumption \ref{ass:setting-sto} holds. Then, for $k\geq 1$,
\bee
\begin{aligned}
	\Exp_{k}[\|A^T(y^{k+1}-y^k)\|^2]& \leq 4\left(\frac{1}{\alpha^2}+\kappa\right)\Exp_{k}[\|x^{k+2}-x^{k+1}\|^2]+4\left(\left(\frac{1}{\alpha}+L\right)^2+2\kappa\right)\Exp_{k}[\|x^{k+1}-x^{k}\|^2]\\ &\quad\quad+4\kappa\|x^k-x^{k-1}\|^2+\frac{4}{\rho}\Exp_{k}[\Lambda_1^{k+1}-\Lambda_1^{k+2}]+\frac{4}{\rho}(\Lambda_1^{k}-\Exp_{k}[\Lambda_1^{k+1}]).
\end{aligned}
\eee
\end{lemma}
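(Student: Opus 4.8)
The plan is to reproduce the deterministic identity \eqref{eq:lambda-x} with the exact gradient replaced by the stochastic estimator $\Sto f_k$, and then to absorb the resulting estimation error into the variance-reduced bound \eqref{eq:cor-variance-reduced}. First I would rearrange the primal update \eqref{eq:iter-sto-1} into $A^Ty^k=(x^k-x^{k+1})/\alpha-\Sto f_k$; subtracting the analogous identity at index $k+1$ gives
\[
A^T(y^{k+1}-y^k)=\frac{x^{k+1}-x^{k+2}}{\alpha}-\frac{x^k-x^{k+1}}{\alpha}-(\Sto f_{k+1}-\Sto f_k).
\]
Writing $e_k:=\Sto f_k-\nabla f(x^k)$ for the estimation error and inserting $\pm\nabla f(x^{k+1})$ and $\pm\nabla f(x^k)$, I would split the right-hand side into the four blocks $(x^{k+1}-x^{k+2})/\alpha$, then $(x^{k+1}-x^k)/\alpha-(\nabla f(x^{k+1})-\nabla f(x^k))$, then $-e_{k+1}$, and finally $e_k$. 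The second block is controlled by $(1/\alpha+L)\|x^{k+1}-x^k\|$ through the $L$-smoothness guaranteed by Assumption \ref{ass:setting-sto} (the average of the $L$-smooth $f_i$ is $L$-smooth), exactly as in the deterministic estimate.

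Next I would apply the elementary bound $\|w_1+w_2+w_3+w_4\|^2\le 4\sum_i\|w_i\|^2$ to these four blocks and take the conditional expectation $\Exp_k[\cdot]$. This immediately produces the two deterministic-type terms $4\alpha^{-2}\Exp_k[\|x^{k+2}-x^{k+1}\|^2]$ and $4(1/\alpha+L)^2\Exp_k[\|x^{k+1}-x^k\|^2]$, and reduces the problem to bounding the two residual variance terms $4\Exp_k[\|e_k\|^2]$ and $4\Exp_k[\|e_{k+1}\|^2]$.

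For the first of these I would apply \eqref{eq:cor-variance-reduced} at index $k$ directly, which contributes $\kappa\Exp_k[\|x^{k+1}-x^k\|^2]$, $\kappa\|x^k-x^{k-1}\|^2$, and $\rho^{-1}(\Lambda_1^k-\Exp_k[\Lambda_1^{k+1}])$. The hard part will be the term $\Exp_k[\|e_{k+1}\|^2]$: because $e_{k+1}$ depends on the fresh randomness drawn at iteration $k+1$, I cannot apply \eqref{eq:cor-variance-reduced} under $\Exp_k$ directly. Instead I would first bound $\Exp_{k+1}[\|e_{k+1}\|^2]$ via \eqref{eq:cor-variance-reduced} at index $k+1$, then take $\Exp_k[\cdot]$ and invoke the tower property $\Exp_k[\Exp_{k+1}[\,\cdot\,]]=\Exp_k[\,\cdot\,]$ to collapse the inner conditional expectations of $\Lambda_1^{k+2}$ and $\|x^{k+2}-x^{k+1}\|^2$; this yields $\kappa\Exp_k[\|x^{k+2}-x^{k+1}\|^2]$, $\kappa\Exp_k[\|x^{k+1}-x^k\|^2]$, and $\rho^{-1}\Exp_k[\Lambda_1^{k+1}-\Lambda_1^{k+2}]$. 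Collecting all contributions, so that the coefficient of $\Exp_k[\|x^{k+2}-x^{k+1}\|^2]$ becomes $1/\alpha^2+\kappa$, that of $\Exp_k[\|x^{k+1}-x^k\|^2]$ becomes $(1/\alpha+L)^2+2\kappa$, and the $\Lambda$-terms assemble as stated (all scaled by the factor $4$), reproduces the claimed inequality. The genuine obstacle is therefore the measurability and bookkeeping of the nested conditional expectations for $e_{k+1}$, namely justifying the tower step given that $\Sto f_{k+1}$ is not $\cF_{k+1}$-measurable; the remaining manipulations are routine applications of the triangle inequality and the bound $\|\sum_i w_i\|^2\le 4\sum_i\|w_i\|^2$.
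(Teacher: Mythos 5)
Your proposal is correct and follows essentially the same route as the paper's proof: rearrange \eqref{eq:iter-sto-1} at indices $k$ and $k+1$, insert $\pm\nabla f(x^{k+1})$ and $\pm\nabla f(x^k)$ to exploit $L$-smoothness, square the resulting four-term bound with $\|w_1+w_2+w_3+w_4\|^2\le 4\sum_i\|w_i\|^2$, and absorb the two error terms via \eqref{eq:cor-variance-reduced}, applied at index $k+1$ under $\Exp_{k+1}$ and collapsed with the tower property. Your accounting of the coefficients ($4(1/\alpha^2+\kappa)$, $4((1/\alpha+L)^2+2\kappa)$, $4\kappa$, and the $\Lambda$-terms) matches the statement exactly, and you in fact make explicit the tower-property step for $\Exp_k[\|\Sto f_{k+1}-\nabla f(x^{k+1})\|^2]$ that the paper leaves implicit in its final sentence.
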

\begin{proof}
Using \eqref{eq:iter-sto-1} twice yields that
\be\label{eq:lambda-x-cond1-sto}
\begin{aligned}
	\|A^T(y^{k+1}-y^k)\|&=\left\|\left(\frac{x^{k+1}-x^{k+2}}{\alpha}-\Sto f_{k+1}\right)-\left(\frac{x^k-x^{k+1}}{\alpha}-\Sto f_k\right)\right\|\\
	&\leq\frac{1}{\alpha}\|x^{k+2}-x^{k+1}\|+\frac{1}{\alpha}\|x^{k+1}-x^{k}\|+\|\Sto f_{k+1}-\Sto f_k\|.	
\end{aligned}
\ee
Since $f$ is $L$-smooth, we have
\be\label{eq:L-smooth-apply}
\begin{aligned}
\|\Sto f_{k+1}-\Sto f_k\|&\leq\|\Sto f_{k+1}-\nabla f(x^{k+1})\|+\|\nabla f(x^{k+1})-\nabla f(x^k)\|+\|\Sto f_k-\nabla f(x^k)\|\\
&\leq \|\Sto f_{k+1}-\nabla f(x^{k+1})\|+\|\Sto f_k-\nabla f(x^k)\|+L\|x^{k+1}-x^{k}\|.
\end{aligned}
\ee
Substituting (\ref{eq:L-smooth-apply}) into \eqref{eq:lambda-x-cond1-sto}, one has
\bee
\begin{aligned}
	\|A^T(y^{k+1}-y^k)\|^2&\leq \frac{4}{\alpha^2}\|x^{k+2}-x^{k+1}\|^2+4\left(\frac{1}{\alpha}+L\right)^2\|x^{k+1}-x^{k}\|^2\\
	&\quad\quad+4\|\Sto f_{k+1}-\nabla f(x^{k+1})\|^2+4\|\Sto f_k-\nabla f(x^k)\|^2.	
\end{aligned}
\eee
%Taking conditional expectation on both sides  with respective to $\cF_{k+1}$ and applying  \eqref{eq:cor-variance-reduced}	with $k=k+1$, we get
%\bee
%\begin{aligned}
%\Exp_{k+1}[\|A^T(y^{k+1}-y^k)\|^2]\leq\ & 4\left(\frac{1}{\alpha^2}+\kappa\right)\Exp_{k+1}[\|x^{k+2}-x^{k+1}\|^2]+4\left(\left(\frac{1}{\alpha}+L\right)^2+\kappa\right)\|x^{k+1}-x^{k}\|^2\\
%& +\frac{4}{\rho}(\Lambda_1^{k+1}-\Exp_{k+1}[\Lambda_1^{k+2}])+4\|\Sto f_k-\nabla f(x^k)\|^2.
%\end{aligned}
%\eee
Finally, taking conditional expectation on both sides  and using \eqref{eq:cor-variance-reduced}, we derive the claim.
\end{proof}	

For the sake of simplicity, define
\[
z^k:=(x^k,y^k,x^{k+1},x^{k-1},x^{k-2}).
\]
Similar to the process of convergence analysis in Section \ref{sec:composite}, we establish the following critical lemma on the descent property of the Lyapunov function $\sL_s$.
\begin{lemma}\label{lem:sto-descent}
%Let $\{(x^k,y^k)\}$ be generated by \eqref{eq:iter-sto}.
Let Assumption \ref{ass:setting-sto} hold. Then, for any $k\geq 1$ and  $\delta_1, \delta_2>0$,
\be\label{eq:sto-descent}
\Exp[\sL^{\Lambda}_{s,k+1}]+\Exp[e_0(\|x^{k+2}-x^{k+1}\|^2+\|x^{k+1}-x^{k}\|^2+\|x^{k}-x^{k-1}\|^2)]\leq\Exp[\sL^{\Lambda}_{s,k}],
\ee
where $e_0$ is defined in (\ref{eq:e}),
\[
\sL^{\Lambda}_{s,k}:=\sL_{s}(z^k)+e_1\Lambda_1^{k+1}+e_2\Lambda_1^{k}+e_3\Lambda_1^{k-1},
\]
and
\[
e_1:=\frac{2\delta_2\alpha}{\rho},\  e_2:=\frac{2\delta_2\alpha}{\rho}+\frac{1}{2\delta_1\rho}+\frac{3\alpha}{2\delta_2\rho},\ e_3:=\frac{3\alpha}{2\delta_2\rho}.
\]
\end{lemma}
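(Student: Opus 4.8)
The plan is to mirror the deterministic argument (Lemma~\ref{lem:cL} feeding into Lemma~\ref{lem:descent}), but to track the two extra stochastic terms that the estimator $\Sto f_k$ introduces and to absorb them into the augmented Lyapunov function $\sL^{\Lambda}_{s,k}$ via the variance-reduced properties. First I would establish a stochastic counterpart of the one-step recursion of Lemma~\ref{lem:cL}. Applying the descent inequality \eqref{eq:gradient-L} to the update \eqref{eq:iter-sto-1} and substituting $\Sto f_k=(x^k-x^{k+1})/\alpha-A^Ty^k$ produces the usual terms plus one genuinely new term $-\langle \Sto f_k-\nabla f(x^k), x^{k+1}-x^k\rangle$. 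The dual part is unchanged: since \eqref{eq:iter-sto-2} has the same form as the deterministic $y$-update, relation \eqref{eq:optimal-h} and the convexity of $h^*$ give the same bound as before. Combining the two with $M=\alpha AA^T$ and simplifying the cross term through \eqref{eq:iter-sto-1} now yields $\langle y^{k+1}-y^k,\, \alpha A(\Sto f_{k-1}-\Sto f_k)\rangle$ in place of the deterministic $\langle y^{k+1}-y^k,\, \alpha A(\nabla f(x^{k-1})-\nabla f(x^k))\rangle$. Thus the recursion for $\cL_s(x^{k+1},y^{k+1})$ carries exactly these two stochastic terms relative to Lemma~\ref{lem:cL}.

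Next I would bound the two terms. For the cross term I would apply Young's inequality with parameter $\delta_2$ to split off $\tfrac{\alpha\delta_2}{2}\|A^T(y^{k+1}-y^k)\|^2$ and $\tfrac{\alpha}{2\delta_2}\|\Sto f_{k-1}-\Sto f_k\|^2$, then decompose $\Sto f_{k-1}-\Sto f_k$ as in \eqref{eq:L-smooth-apply} into the two gradient errors plus a Lipschitz term controlled by $L\|x^k-x^{k-1}\|$. For the new error term I would use Young's inequality with parameter $\delta_1$. Taking the conditional expectation $\Exp_k$ and invoking \eqref{eq:cor-variance-reduced} converts each $\Exp_k[\|\Sto f_k-\nabla f(x^k)\|^2]$ contribution into a telescoping piece $\tfrac{1}{\rho}(\Lambda_1^k-\Exp_k[\Lambda_1^{k+1}])$ plus multiples of $\Exp_k[\|x^{k+1}-x^k\|^2]$ and $\|x^k-x^{k-1}\|^2$; simultaneously I would substitute Lemma~\ref{lem:lambda-x-sto} for $\Exp_k[\|A^T(y^{k+1}-y^k)\|^2]$, which brings in the $\|x^{k+2}-x^{k+1}\|^2$ term and the further telescoping pieces in $\Lambda_1^{k+2},\Lambda_1^{k+1},\Lambda_1^k$.

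The one contribution that does not close under $\Exp_k$ is the previous-step error $\|\Sto f_{k-1}-\nabla f(x^{k-1})\|^2$, which is $\cF_k$-measurable. Here I would pass to the full expectation $\Exp$ by the tower property and re-apply \eqref{eq:sto-mse} at index $k-1$; this is precisely what generates the $\Exp[\Lambda_1^{k-1}]$ contribution and the extra quadratic term $\|x^{k-1}-x^{k-2}\|^2$, thereby explaining both the memory coordinate $x^{k-2}$ in $z^k$ and the summand $c\|v-w\|^2$ in the definition \eqref{eq:def-sto-aux} of $\sL_s$. Finally I would collect everything: the $\Lambda_1$ pieces at levels $k+2,k+1,k,k-1$ are packaged into $e_1\Lambda_1^{k+1}+e_2\Lambda_1^k+e_3\Lambda_1^{k-1}$, shifted by one iteration so the differences telescope, and the coefficients of $\|x^{k+2}-x^{k+1}\|^2$, $\|x^{k+1}-x^k\|^2$ and $\|x^k-x^{k-1}\|^2$ are rewritten, using the definitions of $a,b,c$ together with $e_0$ in \eqref{eq:e}, so that each leaves a residual of exactly $e_0$ on the left-hand side. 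Rearranging and taking full expectation yields \eqref{eq:sto-descent}.

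The hardest part will be the constant bookkeeping: verifying that the prescribed values of $a,b,c,e_0,e_1,e_2,e_3$ make the $\Lambda_1$ terms telescope cleanly and force all three squared-increment coefficients down to the common value $e_0$, while confirming that $\delta_1,\delta_2$ can be chosen to keep $e_0>0$. Aligning the indices so that the one-step-back application of \eqref{eq:sto-mse} matches exactly the $c\|x^{k-1}-x^{k-2}\|^2$ level is the delicate point, since it is what forces the extra memory coordinate and the third $\Lambda_1$ level in the augmented Lyapunov function; getting these offsets right is where most of the care is required.
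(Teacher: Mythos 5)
Your proposal follows the paper's proof essentially step for step: the same $\delta_1$-Young split producing $\frac{1}{2\delta_1}\|\nabla f(x^k)-\Sto f_k\|^2$, the same $\delta_2$-Young split of the cross term $\langle y^{k+1}-y^k,\alpha A(\Sto f_{k-1}-\Sto f_k)\rangle$ into $\frac{\delta_2\alpha}{2}\|A^T(y^{k+1}-y^k)\|^2$ and $\frac{\alpha}{2\delta_2}\|\Sto f_{k-1}-\Sto f_k\|^2$, the decomposition \eqref{eq:L-smooth-apply}, Lemma \ref{lem:lambda-x-sto}, and the same absorption of the $\Lambda_1$ pieces into the shifted Lyapunov function. (The paper conditions on $\cF_{k-1}$ throughout rather than taking $\Exp_k$ and then the tower property as you do; the two are equivalent after passing to the full expectation.)

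One concrete correction: for the previous-step error $\|\Sto f_{k-1}-\nabla f(x^{k-1})\|^2$ you invoke \eqref{eq:sto-mse} at index $k-1$, but that inequality alone produces a \emph{bare} $\Lambda_1^{k-1}$ term (with coefficient $\sigma_1$, not $\kappa$, on the quadratic increments), which does not telescope and cannot match the stated constants $e_3=\frac{3\alpha}{2\delta_2\rho}$ and $c=\frac{3\alpha\kappa}{2\delta_2}$. You need \eqref{eq:cor-variance-reduced} there as well --- i.e., \eqref{eq:sto-mse} combined with the geometric decay \eqref{eq:geometric-decay}, exactly as you already use for the index-$k$ terms --- so that the contribution appears as $\frac{1}{\rho}\bigl(\Lambda_1^{k-1}-\Exp_{k-1}[\Lambda_1^{k}]\bigr)$ plus $\kappa$-weighted increments. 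With that substitution your deferred bookkeeping closes precisely as in the paper, via $e_0=\frac{1}{3}(e_4-e_5-e_6-e_7)$, $a=e_0+e_5$, $b=e_0+e_6+e_7$, $c=e_7$.
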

\begin{proof}
Since the function $f$ is $L$-smooth,  we have
\bee
\begin{aligned}
f(x^{k+1})&\leq f(x^k)+\langle\nabla f(x^k),x^{k+1}-x^k\rangle+\frac{L}{2}\|x^{k+1}-x^k\|^2\\
&= f(x^k)+\langle\Sto f_k,x^{k+1}-x^k\rangle+\langle\nabla f(x^k)-\Sto f_k,x^{k+1}-x^k\rangle+\frac{L}{2}\|x^{k+1}-x^k\|^2\\
&\leq f(x^k)+\langle\frac{x^k-x^{k+1}}{\alpha}-A^Ty^k,x^{k+1}-x^k\rangle+\frac{1}{2\delta_1}\|\nabla f(x^k)-\Sto f_k\|^2+\frac{\delta_1+L}{2}\|x^{k+1}-x^k\|^2\\
&= f(x^k)-\left(\frac{1}{\alpha}-\frac{\delta_1+L}{2}\right)\|x^{k+1}-x^k\|^2+\frac{1}{2\delta_1}\|\nabla f(x^k)-\Sto f_k\|^2-\langle y^k,A(x^{k+1}-x^k)\rangle,
\end{aligned}
\eee
where the second inequality is deduced from \eqref{eq:iter-sto-1} and $\langle x,y\rangle\leq \frac{\delta_1}{2}\|x\|^2+\frac{1}{2\delta_1}\|y\|^2$ for any $\delta_1>0$. Together with the convexity of $h^*$ and $g^k\in\partial h^*(y^k)$, it further indicates  	
\bee
\begin{aligned}
&f(x^{k+1})-h^*(y^{k+1})+\langle y^{k+1},Ax^{k+1}\rangle\\
&\leq f(x^k)-h^*(y^k)+\langle y^{k},Ax^{k}\rangle-\langle y^{k},Ax^{k}\rangle+\langle y^{k+1},Ax^{k+1}\rangle-\langle y^k,A(x^{k+1}-x^k)\rangle+\langle y^{k}-y^{k+1},g^k\rangle\\
&\quad\quad-\left(\frac{1}{\alpha}-\frac{\delta_1+L}{2}\right)\|x^{k+1}-x^k\|^2+\frac{1}{2\delta_1}\|\nabla f(x^k)-\Sto f_k\|^2\\
&= f(x^k)-h^*(y^k)+\langle y^{k},Ax^{k}\rangle+\langle y^{k+1}-y^k,Ax^{k+1}-g^k\rangle\\
&\quad\quad-\left(\frac{1}{\alpha}-\frac{\delta_1+L}{2}\right)\|x^{k+1}-x^k\|^2+\frac{1}{2\delta_1}\|\nabla f(x^k)-\Sto f_k\|^2.
\end{aligned}
\eee
Recalling the definition of $\cL_s$ and substituting \eqref{eq:optimal-h} (let $k+1=k$) into the above inequality, one has
\bee
\begin{aligned}
\cL_s(x^{k+1},y^{k+1})&\leq \cL_s(x^{k},y^{k})+\langle y^{k+1}-y^k,A(x^{k+1}-x^k+x^{k-1}-x^{k})+M(y^k-y^{k-1})\rangle\\
&\quad\quad-\left(\frac{1}{\alpha}-\frac{\delta_1+L}{2}\right)\|x^{k+1}-x^k\|^2+\frac{1}{2\delta_1}\|\nabla f(x^k)-\Sto f_k\|^2.
\end{aligned}
\eee
Using \eqref{eq:iter-sto-1} and the fact that $\langle x,y\rangle\leq\frac{\delta_2}{2}\|x\|^2+\frac{1}{2\delta_2}\|y\|^2$ for the second term of the right-hand side, and  letting $M=\alpha AA^T$, we have
%\bee
%\begin{aligned}
%\cL_s(x^{k+1},y^{k+1})\leq\ & \cL_s(x^{k},y^{k})+\langle y^{k+1}-y^k,\alpha A(\Sto f_{k-1}-\Sto f(x^{k}))\rangle\\
%&-(\frac{1}{\alpha}-\frac{\delta_1+L}{2})\|x^{k+1}-x^k\|^2+\frac{1}{2\delta_1}\|\nabla f(x^k)-\Sto f_k\|^2.
%\end{aligned}
%\eee
%By inequality $\langle x,y\rangle\leq \frac{\delta_2}{2}\|x\|^2+\frac{1}{2\delta_2}\|y\|^2$, the following relation holds
\[
	\begin{aligned}
	\cL_s(x^{k+1},y^{k+1})&\leq \cL_s(x^{k},y^{k})-\left(\frac{1}{\alpha}-\frac{\delta_1+L}{2}\right)\|x^{k+1}-x^k\|^2+
	\frac{\delta_2\alpha}{2}\|A^T(y^{k+1}-y^k)\|^2\\
	&\quad\quad+\frac{1}{2\delta_1}\|\nabla f(x^k)-\Sto f_k\|^2+\frac{\alpha}{2\delta_2}\|\Sto f_{k-1}-\Sto f_k\|^2,
    \end{aligned}	
\]
which, together with \eqref{eq:L-smooth-apply} (take $k=k-1$), yields that
\be\label{eq:c1}
	\begin{aligned}
&	\cL_s(x^{k+1},y^{k+1})\\
&\leq \cL_s(x^{k},y^{k})-\left(\frac{1}{\alpha}-\frac{\delta_1+L}{2}\right)\|x^{k+1}-x^k\|^2+
	\frac{\delta_2\alpha}{2}\|A^T(y^{k+1}-y^k)\|^2+\frac{3\alpha L^2}{2\delta_2}\|x^k-x^{k-1}\|^2\\
	&\quad\quad+\left(\frac{1}{2\delta_1}+\frac{3\alpha}{2\delta_2}\right)\|\nabla f(x^k)-\Sto f_k\|^2+\frac{3\alpha}{2\delta_2}\|\nabla f(x^{k-1})-\Sto f_{k-1}\|^2.
    \end{aligned}
\ee
Taking conditional expectation on both sides of (\ref{eq:c1}), and applying Lemma \ref{lem:lambda-x-sto} as well as \eqref{eq:cor-variance-reduced}, we have
	\bee
	\begin{aligned}
		&\Exp_{k-1}[\cL_s(x^{k+1},y^{k+1})]\\		&\leq\Exp_{k-1}[\cL_s(x^{k},y^{k})]-\left(\frac{1}{\alpha}-\frac{\delta_1+L}{2}-\frac{\kappa}{2\delta_1}-\frac{3\alpha\kappa}{2\delta_2}-2\delta_2\alpha\left(\left(\frac{1}{\alpha}+L\right)^2+2\kappa\right)\right)\Exp_{k-1}[\|x^{k+1}-x^k\|^2]\\
		&\quad+2\delta_2\alpha \left(\frac{1}{\alpha^2}+\kappa\right)\Exp_{k-1}[\|x^{k+2}-x^{k+1}\|^2]+\left(2\delta_2\alpha\kappa+\frac{\kappa}{2\delta_1}+\frac{3\alpha (L^2+2\kappa)}{2\delta_2}\right)\Exp_{k-1}[\|x^k-x^{k-1}\|^2]\\	&\quad+\frac{3\alpha\kappa}{2\delta_2}\|x^{k-1}-x^{k-2}\|^2+\frac{2\delta_2\alpha}{\rho}\Exp_{k-1}[\Lambda_1^{k+1}-\Lambda_1^{k+2}]+\left(\frac{2\delta_2\alpha}{\rho}+\frac{1}{2\delta_1\rho}+\frac{3\alpha}{2\delta_2\rho}\right)\Exp_{k-1}[\Lambda_1^{k}-\Lambda_1^{k+1}]\\
		&\quad+\frac{3\alpha}{2\delta_2\rho}(\Lambda_1^{k-1}-\Exp_{k-1}[\Lambda_1^{k}]).
	\end{aligned}
	\eee
Therefore, taking expectation on both sides implies that
\bee
\begin{aligned}
\Exp[\cL_s(x^{k+1},y^{k+1})]
&\leq\Exp[\cL_s(x^{k},y^{k})]-e_4\Exp[\|x^{k+1}-x^k\|^2]+e_5\Exp[\|x^{k+2}-x^{k+1}\|^2]+e_6\Exp[\|x^k-x^{k-1}\|^2]\\
&\quad\quad+e_7\Exp[\|x^{k-1}-x^{k-2}\|^2]+e_1\Exp[\Lambda_1^{k+1}-\Lambda_1^{k+2}]+e_2\Exp[\Lambda_1^{k}-\Lambda_1^{k+1}]+e_{3}\Exp[\Lambda_1^{k-1}-\Lambda_1^{k}],
\end{aligned}
\eee
where
	\[
	\begin{array}{lll}
e_1=\frac{2\delta_2\alpha}{\rho},\   e_2=\frac{2\delta_2\alpha}{\rho}+\frac{1}{2\delta_1\rho}+\frac{3\alpha}{2\delta_2\rho},\ e_{3}=\frac{3\alpha}{2\delta_2\rho},\ e_4=\frac{1}{\alpha}-\frac{\delta_1+L}{2}-\frac{\kappa}{2\delta_1}-\frac{3\alpha\kappa}{2\delta_2}-2\delta_2\alpha((\frac{1}{\alpha}+L)^2+2\kappa),\\[5pt]
	 e_5=2\delta_2\alpha(\frac{1}{\alpha^2}+\kappa),\
	 e_6=2\delta_2\alpha\kappa+\frac{\kappa}{2\delta_1}+\frac{3\alpha (L^2+2\kappa)}{2\delta_2}, \ e_7=\frac{3\alpha\kappa}{2\delta_2}.
	\end{array}
	 \]
Recalling the definitions of $a,b,c,e_0$, we have $e_0=\frac{1}{3}(e_4-e_5-e_6-e_7)$ and $a=e_0+e_5, b=e_0+e_6+e_7, c=e_7$, and thus
%By the setting of $e$, we have $e=\frac{1}{3}(e_4-e_5-e_6-e_7)$. Then, rearranging the above inequality such that
%\bee
%\begin{aligned}
%	 &\Exp[\cL_s(x^{k+1},y^{k+1})-(e+e_5)\|x^{k+2}-x^{k+1}\|^2+(e+e_6+e_7)\|x^{k+1}-x^k\|^2+e_7\|x^k-x^{k-1}\|^2\\
%	 &+e_1\Lambda_1^{k+2}+e_2\Lambda_1^{k+1}+e_3\Lambda_1^{k}]\\
% \leq \ & \Exp[\cL_s(x^{k},y^{k})-(e+e_5)\|x^{k+1}-x^{k}\|^2+(e+e_6+e_7)\|x^{k}-x^{k-1}\|^2+e_7\|x^{k-1}-x^{k-2}\|^2\\
% &+e_1\Lambda_1^{k+1}+e_2\Lambda_1^{k}+e_3\Lambda_1^{k-1}-e(\|x^{k+2}-x^{k+1}\|^2+\|x^{k+1}-x^{k}\|^2+\|x^{k}-x^{k-1}\|^2)].
%\end{aligned}
%\eee
%Thus, with $e=\frac{1}{3}(e_4-e_5-e_6-e_7)$ and $a=e+e_5, b=e+e_6+e_7, c=e_7$ the following relation holds
\bee
\begin{aligned}
	\Exp[\sL^{\Lambda}_{s,k+1}+e_0(\|x^{k+2}-x^{k+1}\|^2+\|x^{k+1}-x^{k}\|^2+\|x^{k}-x^{k-1}\|^2)]\leq\Exp[\sL^{\Lambda}_{s,k}].
\end{aligned}
\eee
This proof is completed.
\end{proof}

\begin{remark}
As stated previously, the constant $e_0$ is guaranteed to be positive  through a careful selection of  $\delta_1,\delta_2$ and the stepsize $\alpha$. For example, let $\delta_1=1$, $\delta_2=\frac{1}{6}$ and  $\alpha\in(0,1/2(3+7L+6\kappa))$, we have $e_0>0$ by an straightforward  calculation. Thus, Lemma \ref{lem:sto-descent} indicates that the sequence $\{\Exp[\sL^{\Lambda}_{s,k}]\}$ is nonincreasing.
\end{remark}

Define
\be\label{eq:dk}
d^k:=(d_1^k,d_2^k,d_3^k,d_4^k,d_5^k)
\ee
 with
\[
\begin{array}{ll}
d_1^k:=\frac{1}{N}\sum_{i=1}^N\nabla f_i(x^k)+A^Ty^k-2a(x^k-x^{k+1})+2b(x^k-x^{k-1}),\\[8pt]
d_2^k:=Ax^k+M(y^k-y^{k-1})-A(2x^k-x^{k-1}),\\[8pt]
d_3^k:=-2a(x^{k+1}-x^k),\ d_4^k:=2b(x^{k-1}-x^k)+2c(x^{k-1}-x^{k-2}),\ d_5^k:=2c(x^{k-2}-x^{k-1}).
\end{array}
\]
Noting that $g^k=-M(y^k-y^{k-1})+A(2x^k-x^{k-1})\in\partial h^*(y^k)$ from (\ref{eq:optimal-h}), we can easily check that $d^k\in\partial \sL_{s}(z^k)$. In the following lemma, we derive a bound of $d^k$.
\begin{lemma}\label{lem:gradient-bounded-sto}
Let Assumption \ref{ass:setting-sto} be satisfied.
It holds that
\be\label{eq:gradient-bounded-sto}
\Exp_k[\|d^{k}\|^2]\leq \gamma_3\Exp_k[\|x^{k+1}-x^{k}\|^2]+\gamma_4(\|x^k-x^{k-1}\|^2+\|y^k-y^{k-1}\|^2) +\gamma_5\|x^{k-1}-x^{k-2}\|^2+3\Lambda_1^k,
\ee
where
 $\gamma_3:=\frac{3}{\alpha^2}+\frac{12a}{\alpha}+16a^2+3\sigma_1$, $\gamma_4:=20b^2+3\sigma_1+2\|A\|^2+2\|M\|^2$ and $\gamma_5:=12c^2$.
\end{lemma}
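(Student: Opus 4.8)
The plan is to exploit the product structure of the space on which $\sL_s$ is defined, so that $\|d^k\|^2=\|d_1^k\|^2+\|d_2^k\|^2+\|d_3^k\|^2+\|d_4^k\|^2+\|d_5^k\|^2$, bound each of the five blocks separately, take the conditional expectation $\Exp_k[\cdot]$, and finally read off the coefficients of $\Exp_k[\|x^{k+1}-x^k\|^2]$, $\|x^k-x^{k-1}\|^2$, $\|y^k-y^{k-1}\|^2$, $\|x^{k-1}-x^{k-2}\|^2$ and $\Lambda_1^k$. Throughout I would use $\|\sum_{i=1}^m u_i\|^2\leq m\sum_{i=1}^m\|u_i\|^2$ together with $\|A\xi\|\leq\|A\|\|\xi\|$ and $\|M\xi\|\leq\|M\|\|\xi\|$. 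Note that $d_2^k,d_4^k,d_5^k$ depend only on $x^k,x^{k-1},x^{k-2},y^k,y^{k-1}$, hence are $\cF_k$-measurable, whereas $d_1^k$ and $d_3^k$ involve $x^{k+1}$ (and $\Sto f_k$) and are genuinely random under $\Exp_k[\cdot]$.

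The block $d_1^k$ is the only one carrying stochasticity, and this is the step I expect to be the crux. The deterministic Lemma \ref{lem:gradient} rewrote $\nabla_x\sL(z^k)$ by inserting the \emph{previous} iterate's relation $\nabla f(x^{k-1})+A^Ty^{k-1}=(x^{k-1}-x^k)/\alpha$; copying that maneuver here would replace $\nabla f(x^{k-1})$ by $\Sto f_{k-1}$ and thereby drag in the error $\|\nabla f(x^{k-1})-\Sto f_{k-1}\|$ and an undesired $\Lambda_1^{k-1}$ term that is absent from the target bound. Instead I would substitute the \emph{current} primal step \eqref{eq:iter-sto-1} in the form $A^Ty^k=(x^k-x^{k+1})/\alpha-\Sto f_k$ directly into $d_1^k$. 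Since $\frac1N\sum_{i=1}^N\nabla f_i(x^k)=\nabla f(x^k)$, this turns $d_1^k$ into $(\nabla f(x^k)-\Sto f_k)+(\frac{1}{\alpha}-2a)(x^k-x^{k+1})+2b(x^k-x^{k-1})$, i.e. the estimator error plus two deterministic increment terms. Applying $\|u+v+w\|^2\leq 3(\|u\|^2+\|v\|^2+\|w\|^2)$, taking $\Exp_k[\cdot]$, and invoking the variance-reduction inequality \eqref{eq:sto-mse} on $\Exp_k[\|\nabla f(x^k)-\Sto f_k\|^2]$ produces exactly the $3\Lambda_1^k$ term together with the $3\sigma_1$ contributions to both $\gamma_3$ and $\gamma_4$; the crude estimate $(\frac{1}{\alpha}-2a)^2\leq(\frac{1}{\alpha}+2a)^2$ yields the remaining $\frac{3}{\alpha^2}+\frac{12a}{\alpha}+12a^2$ part of $\gamma_3$ and the $12b^2$ contribution to $\gamma_4$.

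The remaining four blocks are deterministic given $\cF_k$. Using $g^k=-M(y^k-y^{k-1})+A(2x^k-x^{k-1})$ from \eqref{eq:optimal-h}, the second block collapses to $d_2^k=M(y^k-y^{k-1})-A(x^k-x^{k-1})$, which I would bound by $2\|M\|^2\|y^k-y^{k-1}\|^2+2\|A\|^2\|x^k-x^{k-1}\|^2$, supplying the $2\|M\|^2$ and $2\|A\|^2$ pieces of $\gamma_4$. The blocks $d_3^k=-2a(x^{k+1}-x^k)$, $d_4^k=2b(x^{k-1}-x^k)+2c(x^{k-1}-x^{k-2})$ and $d_5^k=2c(x^{k-2}-x^{k-1})$ are affine in consecutive increments, so splitting each squared norm termwise gives $4a^2\Exp_k[\|x^{k+1}-x^k\|^2]$ (completing the $16a^2$ in $\gamma_3$), $8b^2\|x^k-x^{k-1}\|^2$ (completing the $20b^2$ in $\gamma_4$), and $(8c^2+4c^2)\|x^{k-1}-x^{k-2}\|^2=12c^2\|x^{k-1}-x^{k-2}\|^2$, which is precisely $\gamma_5$.

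Finally I would add the five bounds and collect, choosing $\gamma_4$ to dominate simultaneously the coefficient of $\|x^k-x^{k-1}\|^2$ (namely $20b^2+3\sigma_1+2\|A\|^2$) and that of $\|y^k-y^{k-1}\|^2$ (namely $2\|M\|^2$), which is exactly why both increments are grouped under a single $\gamma_4$. The genuine obstacle is confined to the second paragraph: substituting via the current rather than the previous step, so that only $\Lambda_1^k$ survives and the conditional expectation can be discharged cleanly through \eqref{eq:sto-mse}; everything else is routine bookkeeping of constants.
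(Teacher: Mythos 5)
Your proposal is correct and takes essentially the same route as the paper's own proof: the same five-block decomposition of $d^k$, the same substitution of the current step \eqref{eq:iter-sto-1} so that only $\Exp_k[\|\nabla f(x^k)-\Sto f_k\|^2]$ (hence $3\Lambda_1^k$ via \eqref{eq:sto-mse}) appears rather than a stray $\Lambda_1^{k-1}$, and the same bookkeeping yielding $\gamma_3,\gamma_4,\gamma_5$. The only cosmetic difference is that you merge the coefficients into $\left(\tfrac{1}{\alpha}-2a\right)$ before squaring and then bound by $\left(\tfrac{1}{\alpha}+2a\right)^2$, whereas the paper applies the triangle inequality first to reach $\tfrac{1}{\alpha}+2a$ directly; the resulting constants are identical.
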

\begin{proof}
It is sufficient to bound the five components of $d^k$.
Firstly, from \eqref{eq:iter-sto-1} we have
\bee
\begin{aligned}
\|d_1^k\|^2&=\|\nabla f(x^{k})+A^Ty^{k}-2a(x^{k}-x^{k+1})+2b(x^{k}-x^{k-1})\|^2\\
&\leq\left(\|\nabla f(x^{k})-\Sto f_k\|+\|A^Ty^{k}+\Sto f_k\|+2a\|x^{k+1}-x^{k}\|+2b\|x^{k}-x^{k-1}\|\right)^2\\
&= \left(\|\nabla f(x^{k})-\Sto f_k\|+\left(\frac{1}{\alpha}+2a\right)\|x^{k+1}-x^k\|+2b\|x^{k}-x^{k-1}\|\right)^2\\
&= 3\|\nabla f(x^{k})-\Sto f_k\|^2+3\left(\frac{1}{\alpha}+2a\right)^2\|x^{k+1}-x^k\|^2+12b^2\|x^{k}-x^{k-1}\|^2.
\end{aligned}
\eee
Taking conditional expectation on both sides and using \eqref{eq:sto-mse} yield that
\[
\Exp_k[\|d_1^k\|^2]\leq 3\left(\frac{1}{\alpha^2}+\frac{4a}{\alpha}+4a^2+\sigma_1\right)\Exp_k[\|x^{k+1}-x^k\|^2]+3(4b^2+\sigma_1)\|x^k-x^{k-1}\|^2+3\Lambda_1^k.
\]
For the other four components of $d^k$, it follows that
\[
\begin{array}{ll}
\Exp_k[\|d_2^k\|^2]= \|M(y^{k}-y^{k-1})-A(x^{k}-x^{k-1})\|^2
\leq 2\|M\|^2\cdot\|y^{k}-y^{k-1}\|^2+2\|A\|^2\cdot\|x^{k}-x^{k-1}\|^2,
\\[8pt]
\Exp_k[\|d_3^k\|^2]= 4a^2\Exp_k[\|x^{k+1}-x^{k}\|^2],\\[8pt]
\Exp_k[\|d_4^k\|^2]\leq 8b^2\|x^{k}-x^{k-1}\|^2+8c^2\|x^{k-1}-x^{k-2}\|^2,\\[8pt]
\Exp_k[\|d_5^k\|^2]=4c^2\|x^{k-1}-x^{k-2}\|^2.
\end{array}
\]
Combining these results, we derive the conclusion.
%\bee
%\begin{aligned}
%\Exp_k[\|d^{k}\|^2]&\leq (\frac{3}{\alpha^2}+\frac{12a}{\alpha}+16a^2+\sigma_1)\Exp_k[\|x^{k+1}-x^{k}\|^2]  +(16b^2+3\sigma_1+2\|A\|^2+2\|M\|^2)(\|x^k-x^{k-1}\|^2+\|y^k-y^{k-1}\|^2)\\
%&\quad\quad+8c^2\|x^{k-1}-x^{k-2}\|+3\Lambda_1^k.
%\end{aligned}
%\eee
%The proof is completed.
\end{proof}

\subsection{Convergence analysis}
Now, with the help of the auxiliary lemmas established in the previous subsection, we demonstrate that  the iterates $\{(x^k,y^k)\}$ of Algorithm \ref{alg:SPPDG} exhibit  the following elementary convergence property  under the assumption that $\{(x^k,y^k)\}$ is  bounded almost surely (for short, a.s.). This assumption is also used in \cite{Davis2016,LMQ2021} for studying stochastic optimization algorithms.
\begin{proposition}\label{prop:finite-length}
Suppose that $\{(x^k,y^k)\}$ is bounded almost surely. Then, under Assumption \ref{ass:setting-sto},
\[
  \sum_{k=0}^{\infty}\|x^{k+1}-x^k\|^2<\infty\ \mbox{a.s.}\quad \text{and}\quad  \sum_{k=0}^{\infty}\|y^{k+1}-y^k\|^2<\infty\ \mbox{a.s.}
  \]
\end{proposition}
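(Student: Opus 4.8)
The plan is to leverage the expectation-form descent inequality of Lemma~\ref{lem:sto-descent} to obtain summability of $\Exp[\|x^{k+1}-x^k\|^2]$, and then to upgrade this to the desired almost sure square-summability through a Tonelli (monotone convergence) argument that exploits the nonnegativity of each summand. First I would verify that $\sL^{\Lambda}_{s,k}$ is bounded below by a deterministic constant. The terms $e_1\Lambda_1^{k+1}+e_2\Lambda_1^{k}+e_3\Lambda_1^{k-1}$ are nonnegative, the quadratic couplings $b\|x^{k}-x^{k-1}\|^2+c\|x^{k-1}-x^{k-2}\|^2$ in $\sL_s$ are nonnegative, and the only negative contribution $-a\|x^{k}-x^{k+1}\|^2$ is controlled by the almost sure boundedness of $\{x^k\}$, which bounds consecutive increments. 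Moreover $\cL_s(x^k,y^k)\geq \inf_{x}\cL_s(x,y^k)$, and since the dual function $y\mapsto\inf_{x}\cL_s(x,y)$ is concave and finite everywhere under Assumption~\ref{ass:setting-sto}(ii), it is bounded below on the bounded range of $\{y^k\}$. Hence $\inf_k\sL^{\Lambda}_{s,k}>-\infty$ almost surely, and in particular $\{\Exp[\sL^{\Lambda}_{s,k}]\}$ is bounded below.

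Next I would telescope Lemma~\ref{lem:sto-descent}. Summing \eqref{eq:sto-descent} over $k=1,\ldots,n$ and using that the final term $\Exp[\sL^{\Lambda}_{s,n+1}]$ is bounded below yields
\[
e_0\sum_{k=1}^{n}\Exp[\|x^{k+1}-x^k\|^2]\leq \Exp[\sL^{\Lambda}_{s,1}]-\Exp[\sL^{\Lambda}_{s,n+1}]\leq \Exp[\sL^{\Lambda}_{s,1}]-\inf_k\Exp[\sL^{\Lambda}_{s,k}]<\infty,
\]
so that $\sum_{k=1}^{\infty}\Exp[\|x^{k+1}-x^k\|^2]<\infty$. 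Since each summand $\|x^{k+1}-x^k\|^2$ is nonnegative, Tonelli's theorem gives $\Exp\bigl[\sum_{k}\|x^{k+1}-x^k\|^2\bigr]=\sum_{k}\Exp[\|x^{k+1}-x^k\|^2]<\infty$, whence $\sum_{k}\|x^{k+1}-x^k\|^2<\infty$ almost surely.

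For the dual sequence I would return to Lemma~\ref{lem:lambda-x-sto}. Taking full expectation there and summing over $k$, the increment terms are already summable by the previous step, while the two $\Lambda_1$ contributions $\tfrac{4}{\rho}\Exp[\Lambda_1^{k+1}-\Lambda_1^{k+2}]$ and $\tfrac{4}{\rho}\Exp[\Lambda_1^{k}-\Lambda_1^{k+1}]$ telescope and, because $\Lambda_1^k\geq 0$, are bounded by $\tfrac{4}{\rho}(\Exp[\Lambda_1^{1}]+\Exp[\Lambda_1^{2}])<\infty$. This gives $\sum_{k}\Exp[\|A^T(y^{k+1}-y^k)\|^2]<\infty$, and the surjectivity bound \eqref{eq:hat-lambda}, namely $\hat{\lambda}\|y^{k+1}-y^k\|\leq\|A^T(y^{k+1}-y^k)\|$, upgrades this to $\sum_{k}\Exp[\|y^{k+1}-y^k\|^2]<\infty$. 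A second application of Tonelli then yields $\sum_{k}\|y^{k+1}-y^k\|^2<\infty$ almost surely.

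The main obstacle is the passage from the expectation-form descent to an almost sure conclusion: the estimate in Lemma~\ref{lem:sto-descent} controls only $\Exp[\sL^{\Lambda}_{s,k}]$, so a direct pathwise telescoping is unavailable. The resolution is precisely the nonnegativity of the increments, which lets Tonelli convert finiteness of $\sum_{k}\Exp[\|x^{k+1}-x^k\|^2]$ into almost sure finiteness of $\sum_{k}\|x^{k+1}-x^k\|^2$. A secondary delicate point is establishing the lower bound on $\Exp[\sL^{\Lambda}_{s,k}]$: one must ensure that neither the negative term $-a\|x^{k}-x^{k+1}\|^2$ nor the $-h^*(y^k)$ contribution inside $\cL_s$ escapes to $-\infty$, which is exactly where the almost sure boundedness hypothesis and Assumption~\ref{ass:setting-sto}(ii) enter.
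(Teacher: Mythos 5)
Your proposal is correct and takes essentially the same route as the paper's proof: it telescopes the descent inequality of Lemma \ref{lem:sto-descent} to obtain $\sum_{k}\Exp[\|x^{k+1}-x^k\|^2]<\infty$, passes to the almost sure statement via nonnegativity of the summands (the Tonelli step the paper performs implicitly), and then takes expectations in Lemma \ref{lem:lambda-x-sto}, telescopes the $\Lambda_1$ terms, and invokes the surjectivity bound \eqref{eq:hat-lambda} to conclude for $\{y^k\}$. Your justification of the lower bound on $\Exp[\sL^{\Lambda}_{s,k}]$ via concavity of the dual function $y\mapsto\inf_x\cL_s(x,y)$ is a slightly more explicit rendering of the paper's one-line appeal to Assumption \ref{ass:setting-sto}(ii) and almost sure boundedness, but it is the same step rather than a different argument.
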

\begin{proof}
Summing (\ref{eq:sto-descent}) over $k=1,\ldots,n$ yields that
\be\label{eq:i-1sto}
e_0\sum_{k=1}^n\Exp[\|x^{k+2}-x^{k+1}\|^2+\|x^{k+1}-x^{k}\|^2+\|x^{k}-x^{k-1}\|^2]\leq \Exp[\sL^{\Lambda}_{s,1}]-\Exp[\sL^{\Lambda}_{s,n+1}].
\ee	
From Assumption \ref{ass:setting-sto}, $\cL_s$ is bounded from below, which, together with the almost sure boundedness of $\{x^k\}$, ensures that $\Exp[\sL^{\Lambda}_{s,k}]$ is bounded from below. Since $\Exp[\sL^{\Lambda}_{s,k}]$ is nonincreasing (Lemma \ref{lem:sto-descent}),  thus $\Exp[\sL^{\Lambda}_{s,k}]$ converges to a finite value. Then, from (\ref{eq:i-1sto}) it follows that
\be\label{eq:i-2sto}
\sum_{k=1}^{\infty}\Exp[\|x^{k}-x^{k-1}\|^2]=\sum_{k=0}^{\infty}\Exp[\|x^{k+1}-x^k\|^2]<\infty.
\ee
This also implies that
\be\label{eq:i-coro}
\lim_{k\to\infty}\Exp[\|x^{k+1}-x^k\|^2] = 0
\ee
and
\be\label{eq:i-coro1}
 \sum_{k=0}^{\infty}\|x^{k+1}-x^k\|^2<\infty\quad \mbox{a.s.}
\ee
Furthermore, from item (iii) in Definition \ref{def:variance-reduced}, it follows that
\be\label{eq:gamma-conver}
\lim_{k\to\infty}\Exp[\Lambda_1^k] = 0 \text{ and } \lim_{k\to\infty}\Exp[\Lambda_2^k] = 0.
\ee
By  Lemma \ref{lem:lambda-x-sto} and (\ref{eq:hat-lambda}), we have
\be\label{eq:i-4sto}
\begin{aligned}
\Exp_{k}[\|y^{k+1}-y^k\|^2]&\leq\frac{4}{{\hat{\lambda}}^2}\left(\left(\frac{1}{\alpha}+L\right)^2+2\kappa\right) \left(\Exp_{k}[\|x^{k+2}-x^{k+1}\|^2+\|x^{k+1}-x^{k}\|^2]+\|x^k-x^{k-1}\|^2\right)\\
&\quad\quad+\frac{4}{{\hat{\lambda}}^2\rho}\left(\Exp_{k}[\Lambda_1^{k+1}-\Lambda_1^{k+2}]+(\Lambda_1^{k}-\Exp_{k}[\Lambda_1^{k+1}])\right).
\end{aligned}
\ee
Taking expectation on both sides and summing it over $k=1,\ldots,n$, we have
\be\label{eq:a4}
\begin{aligned}
\sum_{k=1}^{n}\Exp[\|y^{k+1}-y^k\|^2]&\leq\frac{4}{{\hat{\lambda}}^2}\left(\left(\frac{1}{\alpha}+L\right)^2+2\kappa\right)\sum_{k=1}^{n}\Exp[\|x^{k+2}-x^{k+1}\|^2+\|x^{k+1}-x^{k}\|^2+\|x^k-x^{k-1}\|^2]\\
	&\quad\quad+\frac{4}{{\hat{\lambda}}^2\rho}\Exp[\Lambda_1^{1}+\Lambda_1^{2}-\Lambda_1^{n+1}-\Lambda_1^{n+2}].
\end{aligned}
\ee
Let $n\to\infty$, then using \eqref{eq:i-2sto} and \eqref{eq:gamma-conver}, one has
 \be\label{eq:i-3sto}
 \sum_{k=0}^{\infty}\Exp[\|y^{k+1}-y^k\|^2]<\infty,
 \ee
 which implies that
 \be\label{eq:i-coro2}
 \lim_{k\to\infty}\Exp[\|y^{k+1}-y^k\|^2] = 0
 \ee
 and
 \be\label{eq:i-coro4}
\sum_{k=0}^{\infty}\|y^{k+1}-y^k\|^2<\infty \quad\mbox{a.s.}
 \ee
The proof is completed.
\end{proof}

%\begin{remark}\label{rem:conv}
%In the proof of Proposition \ref{prop:finite-length}, we have shown that the sequence $\{\Exp[\sL^{\Lambda}_{s,k}]\}$ converges to a finite value $\bar{\sL}$. From the definition of $\sL^{\Lambda}_{s,k}$, we have
%\[
%\begin{aligned}
%\Exp[\sL^{\Lambda}_{s,k}]
%&=\Exp[\cL_s(x^k,y^k)]-a\Exp[\|x^k-x^{k+1}\|^2]+b\Exp[\|x^k-x^{k-1}\|^2]+c\Exp[\|x^{k-1}-x^{k-2}\|^2]\\[8pt]
%&\quad\quad
%+e_1\Exp[\Lambda_1^{k+1}]+e_2\Exp[\Lambda_1^{k}]+e_3\Exp[\Lambda_1^{k-1}].
%\end{aligned}
%\]
%Therefore, from (\ref{eq:i-coro}), (\ref{eq:gamma-conver}) and (\ref{eq:i-coro2}), it follows that the sequences $\{\Exp[\cL_s(x^k,y^k)]\}$ and $\{\Exp[\sL_s(z^k)]\}$ also converge to $\bar{\sL}$.
%\end{remark}

\begin{remark}\label{rem:event}
Due to the random nature of $\Sto f_k$, we can define a suitable sample space $\Omega$ based on the structure of Algorithm \ref{alg:SPPDG}. Then, the sequence $\{(x^k(\omega),y^k(\omega))\}$ with each sample $\omega\in\Omega$ corresponds to the generated iterates by a single run of Algorithm \ref{alg:SPPDG}. The sample space $\Omega$ can be equipped with a $\sigma$-algebra $\cF$ and a probability measure $\mathbb{P}$ to form a probability space $(\Omega,\cF,\mathbb{P})$.
 Consequently, the assumption that  $\{(x^k,y^k)\}$ is  bounded almost surely implies that there exists an event $\cA$ with $\mathbb{P}(\cA)=1$ such that the sequence  $\{(x^k(\omega),y^k(\omega))\}$ is bounded for every $\omega\in\cA$.
 \end{remark}

The following theorem establishes subsequence convergence by showing that any cluster point of the sequence $\{(x^k,y^k)\}$  is a critical point of $\cL_s$ with probability $1$.
\begin{theorem}\label{th:usual-sto}
	Let Assumption \ref{ass:setting-sto} be satisfied and $\{(x^k,y^k)\}$ be  bounded almost surely. Then, there exists an event $\cA$ with measure $1$ such that, for all $\omega\in\cA$, the following statements hold:
	\begin{itemize}
	\item[(i)]  the set $\cC_{\omega}$ containing all cluster points of $\{(x^k(\omega),y^k(\omega))\}$ is nonempty and compact, and \[\dist((x^k(\omega),y^k(\omega)),\cC_{\omega})\to 0;\]
	\item[(ii)] $\cC_{\omega}\subseteq \crit\cL_s$;
	\item[(iii)]  $\cL_s$ is finite and constant on  $\cC_{\omega}$.	
	\end{itemize}
\end{theorem}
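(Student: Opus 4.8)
The plan is to transcribe the deterministic argument of Theorem~\ref{th:usual} to the pathwise setting, the essential new task being to upgrade the expectation bounds of the previous subsection to almost sure statements. First I would fix the almost sure event $\cA$ on which $\{(x^k,y^k)\}$ is bounded and on which the conclusions of Proposition~\ref{prop:finite-length} hold, so that $\|x^{k+1}-x^k\|\to 0$ and $\|y^{k+1}-y^k\|\to 0$ along each path; I will enlarge $\cA$ below by intersecting it with finitely many further measure-one events, which keeps $\mathbb{P}(\cA)=1$.

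The crucial preliminary step is to show $d^k\to 0$ almost surely, where $d^k\in\partial\sL_s(z^k)$ is the vector defined in \eqref{eq:dk}. For this I would first establish $\sum_k\Exp[\Lambda_1^k]<\infty$: taking full expectation in \eqref{eq:geometric-decay} gives $\Exp[\Lambda_1^{k+1}]\le(1-\rho)\Exp[\Lambda_1^k]+\sigma_{\Lambda}(\Exp[\|x^{k+1}-x^k\|^2]+\Exp[\|x^k-x^{k-1}\|^2])$, and summing this geometric recursion against the finite series \eqref{eq:i-2sto} yields $\sum_k\Exp[\Lambda_1^k]\le\rho^{-1}(\Exp[\Lambda_1^1]+2\sigma_{\Lambda}\sum_k\Exp[\|x^{k+1}-x^k\|^2])<\infty$. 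Next I would take full expectation in Lemma~\ref{lem:gradient-bounded-sto} and sum over $k$; by \eqref{eq:i-2sto}, \eqref{eq:i-3sto} and the bound just obtained, every term on the right is summable, so $\sum_k\Exp[\|d^k\|^2]<\infty$. Tonelli's theorem then gives $\sum_k\|d^k\|^2<\infty$ almost surely, whence $d^k\to 0$ on a measure-one event (to be absorbed into $\cA$); the same reasoning applied to $\sum_k\Exp[\Lambda_1^k]<\infty$ yields $\Lambda_1^k\to 0$ almost surely.

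With these facts in hand, parts (i) and (ii) would follow exactly as in Theorem~\ref{th:usual}. For (i), boundedness of $\{(x^k(\omega),y^k(\omega))\}$ makes $\cC_{\omega}$ nonempty, and compactness together with $\dist((x^k,y^k),\cC_{\omega})\to 0$ is the standard cluster-set argument of \cite[Lemma 5]{BST2014}, using $\|x^{k+1}-x^k\|\to 0$ and $\|y^{k+1}-y^k\|\to 0$. For (ii), I would fix $(\bar x,\bar y)\in\cC_{\omega}$ with a subsequence $(x^{k_q},y^{k_q})\to(\bar x,\bar y)$; since consecutive iterates coalesce, $z^{k_q}\to\bar z:=(\bar x,\bar y,\bar x,\bar x,\bar x)$. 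Because $d^{k_q}\in\partial\sL_s(z^{k_q})$ and $d^{k_q}\to 0$, outer semicontinuity of $\partial\sL_s$ (which reduces to outer semicontinuity of $\partial h^*$ and continuity of the smooth part $\nabla f$) gives $0\in\partial\sL_s(\bar z)$, that is, $\bar z\in\crit\sL_s$; Lemma~\ref{lem:crit-relation-sto} then yields $(\bar x,\bar y)\in\crit\cL_s$.

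Finally, part (iii). Finiteness is immediate from (ii): any $(\bar x,\bar y)\in\cC_{\omega}$ lies in $\crit\cL_s$, so $\bar y\in\dom\partial h^*\subseteq\dom h^*$, and by \cite[Theorem 2.22]{Beck2017} the function $\cL_s$ is finite and continuous on $\X\times\dom h^*$. The constant-value claim is the main obstacle, and I expect it to be the hard part: unlike the deterministic case, Lemma~\ref{lem:sto-descent} only furnishes monotonicity of the \emph{expected} Lyapunov value $\Exp[\sL^{\Lambda}_{s,k}]$, which does not transfer pathwise. My plan is to revisit the conditional inequality derived inside the proof of Lemma~\ref{lem:sto-descent} and recast it as a Robbins--Siegmund almost-supermartingale, treating $\|x^{k+1}-x^k\|^2$, $\Lambda_1^k$ and $\|\Sto f_k-\nabla f(x^k)\|^2$ as perturbations; each is almost surely summable, the last because summing \eqref{eq:sto-mse} in expectation against \eqref{eq:i-2sto} and $\sum_k\Exp[\Lambda_1^k]<\infty$ gives $\sum_k\Exp[\|\Sto f_k-\nabla f(x^k)\|^2]<\infty$. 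The supermartingale convergence theorem would then give that $\sL^{\Lambda}_{s,k}$ converges almost surely to a finite random limit $\bar{L}_s(\omega)$; since the quadratic corrections and $\Lambda_1^k$ vanish almost surely, $\cL_s(x^k,y^k)\to\bar{L}_s(\omega)$, and continuity of $\cL_s$ on $\X\times\dom h^*$ forces $\cL_s(\bar x,\bar y)=\bar{L}_s(\omega)$ at every cluster point. The delicate points to verify are the measurability and filtration shift when setting up the almost-supermartingale, and the careful allocation of which terms are relegated to the summable perturbation versus retained as the nonnegative decrement.
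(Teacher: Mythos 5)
Your proposal is correct in substance, and for parts (i) and (ii) it follows essentially the paper's own route: fix the measure-one event from the a.s.\ boundedness and Proposition \ref{prop:finite-length}, prove $\sum_k\Exp[\|d^k\|^2]<\infty$ by taking full expectation in Lemma \ref{lem:gradient-bounded-sto} and summing, deduce $d^k\to 0$ a.s.\ by Tonelli, and then pass to the limit via outer semicontinuity of $\partial\sL_s$ and Lemma \ref{lem:crit-relation-sto}. One pleasant difference in detail: you control the $\Lambda_1^k$ terms by summing the geometric recursion \eqref{eq:geometric-decay} to obtain $\sum_k\Exp[\Lambda_1^k]<\infty$ outright, whereas the paper only rearranges \eqref{eq:geometric-decay} into a telescoping bound $\Exp[\Lambda_1^k]\le\rho^{-1}\Exp[\Lambda_1^k-\Lambda_1^{k+1}]+\tfrac{\sigma_\Lambda}{\rho}(\cdots)$ and telescopes; your version is cleaner and slightly stronger, and it also feeds directly into your bound $\sum_k\Exp[\|\Sto f_k-\nabla f(x^k)\|^2]<\infty$, which the paper obtains from \eqref{eq:cor-variance-reduced} by the same telescoping device.

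The genuine divergence is in part (iii). The paper's key observation is that the descent inequality \eqref{eq:c1} is a \emph{sure}, pathwise inequality---it follows from $L$-smoothness of $f$, convexity of $h^*$ and the update rules, with no conditioning whatsoever---so after showing $\sum_k W_k<\infty$ a.s.\ (where $W_k$ collects the nonnegative right-hand perturbations, whose a.s.\ summability follows from exactly the expectation estimates you list), it applies the deterministic perturbed-convergence lemma \cite[Proposition A.4.4]{Bertsekas2015} path by path to conclude that $\cL_s(x^k(\omega),y^k(\omega))$ converges, finiteness of the limit being forced by continuity of $\cL_s$ along the convergent subsequence. Your Robbins--Siegmund plan inserts a conditional expectation that is not needed and, as stated, runs into the one requirement that the a.s.-boundedness hypothesis does not supply: the almost-supermartingale theorem needs $V_k$ nonnegative (or bounded below by a deterministic constant), while here $\cL_s(x^k,y^k)$ is bounded below only path by path, with a random bound depending on the compact set containing the trajectory. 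This can be patched by localization with stopping times on events where the iterates stay in a fixed ball, but it is machinery the problem does not require: once you know all the perturbation terms are a.s.\ summable---which your own preliminary estimates already establish---the pathwise deterministic argument gives the constancy of $\cL_s$ on $\cC_\omega$ immediately, which is precisely what the paper does.
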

\begin{proof}
Because  $\{(x^k,y^k)\}$ is  bounded almost surely, from Remark \ref{rem:event} there exists an event $\cA$ with measure 1 such that the sequence  $\{(x^k(\omega),y^k(\omega))\}$ is bounded for any fixed $\omega\in\cA$. Hence, the set $\cC_{\omega}$ is nonempty.
For any $(\bar{x}(\omega),\bar{y}(\omega))\in\cC_{\omega}$,  there exists a subsequence $\{(x^{k_q}(\omega),y^{k_q}(\omega))\}$ of $\{(x^k(\omega),y^k(\omega))\}$ such that
 \be\label{eq:conver-subsequence}
 x^{k_q}(\omega)\to \bar{x}(\omega) \text{ and } y^{k_q}(\omega)\to \bar{y}(\omega).
 \ee
 From \eqref{eq:i-coro1} and \eqref{eq:i-coro4}, we have
\be\label{eq:i-coro3}
\lim_{k\to\infty}\|x^{k+1}(\omega)-x^k(\omega)\| = 0 \text{ and } \lim_{k\to\infty}\|y^{k+1}(\omega)-y^k(\omega)\| = 0.
\ee
Thus, we obtain that $\cC_{\omega}$  is   compact  and $\dist((x^k(\omega),y^k(\omega)),\cC_{\omega})\to 0$, by following the same line of the proof of  Theorem \ref{th:usual} (ii). Item (i) is derived.

We next prove that, for any $(\bar{x}(\omega),\bar{y}(\omega))\in\cC_{\omega}$, $\bar{z}(\omega):=(\bar{x}(\omega),\bar{y}(\omega),\bar{x}(\omega),\bar{x}(\omega),\bar{x}(\omega))\in\crit\sL_s$, i.e., $0\in\partial \sL_s(\bar{z}(\omega))$, by using the outer semicontinuity of $\partial \sL_s$. Let $$z^{k_q}(\omega):=(x^{k_q}(\omega),y^{k_q}(\omega),x^{k_q+1}(\omega),x^{{k_q}-1}(\omega),x^{{k_q}-2}(\omega)).$$
It immediately follows from \eqref{eq:conver-subsequence} that $z^{k_q}(\omega)\to \bar{z}(\omega)$. Let $d^{k_q}(\omega)$ be defined similarly to (\ref{eq:dk}) with respect to $\omega$. Then, we have $d^{k_q}(\omega)\in\partial \sL_s(z^{k_q}(\omega))$. Therefore, due to the outer semicontinuity of $\partial \sL_s$, in order to obtain $0\in\partial \sL_s(\bar{z}(\omega))$, it is sufficient to show  $d^{k_q}(\omega)\to 0$.
%It follows from \eqref{eq:a.s.conv-L}, \eqref{eq:conver-subsequence} and the Dominated Convergence Theorem that
%\be\label{eq:expect-conv-Land z}
%\lim_{q\to\infty}\Exp[\sL_{s,k_q}]=\Exp[\cL_s(\bar{x},\bar{y})] \text{ and } \lim_{q\to\infty}\Exp[\|z^{k_q}-\bar{z}\|]=0.
%\ee
% \be\label{eq:x-x-sto}
% \|x^{k_q+1}-x^{k_q}\|\xrightarrow[]{\text{a.s.}} 0 \text{ and }  \|y^{k_q+1}-y^{k_q}\|\xrightarrow[]{\text{a.s.}}0 \text{ as } q\to\infty.
% \ee
From Lemma \ref{lem:gradient-bounded-sto}, there exists a constant $r>0$ such that
%\[
%\Exp[\|d^{k_q}\|]\leq \Exp[\gamma_3\|x^{k_q+1}-x^{k_q}\|+\gamma_4(\|x^{k_q}-x^{k_q-1}\|+\|y^{k_q}-y^{k_q-1}\|) +\gamma_5\|x^{k_q-1}-x^{k_q-2}\|+\Lambda_2^{k_q}].
%\]
\be\label{eq:gradient square}
\Exp[\|d^{k_q}\|^2]
\leq r(\Exp[\|x^{k_q+1}-x^{k_q}\|^2+\|x^{k_q}-x^{k_q-1}\|^2+\|x^{k_q-1}-x^{k_q-2}\|^2+\|y^{k_q}-y^{k_q-1}\|^2+\Lambda_1^{k_q}]).
\ee
By rearranging \eqref{eq:geometric-decay}, we attain
\[
\Exp[\Lambda_1^{k_q}]\leq\frac{1}{\rho}\Exp[\Lambda_1^{k_q}-\Lambda_1^{k_q+1}]+\frac{\sigma_{\Lambda}}{\rho}(\Exp[\|x^{k_q+1}-x^{k_q}\|^2]+\Exp[\|x^{k_q}-x^{k_q-1}\|^2]),
\]
 which, together with  \eqref{eq:gradient square}, yields that
\[
\begin{aligned}
\Exp[\|d^{k_q}\|^2]&\leq (r+\frac{r\sigma_{\Lambda}}{\rho})\Exp[\|x^{k_q+1}-x^{k_q}\|^2+\|x^{k_q}-x^{k_q-1}\|^2+\|x^{k_q-1}-x^{k_q-2}\|^2]\\
&\quad\quad+r\Exp[\|y^{k_q}-y^{k_q-1}\|^2]+\frac{r}{\rho} \Exp[\Lambda_1^{k_q}-\Lambda_1^{k_q+1}].
\end{aligned}
\]
Summing up  from $k_q=2$ to $\infty$ and using \eqref{eq:i-2sto}, \eqref{eq:i-3sto} and \eqref{eq:gamma-conver}, one has
\[
\sum_{k_q=2}^{\infty}\Exp[\|d^{k_q}\|^2]<\infty.
\]
Hence $d^{k_q}\to 0$ almost surely, which further implies that $d^{k_q}(\omega)\to 0$.
Thus, we finish the proof of $\bar{z}(\omega)\in\crit\sL_s$. Furthermore, we derive item (ii) by Lemma \ref{lem:crit-relation-sto}.
%(iv) %For any $\bar{z}\in\cC(x^0,y^0)$, there exists a subsequence $\{z^{k_q}\}$ such that \eqref{eq:conver-subsequence} and \eqref{eq:a.s.conv-L} hold.
%By the proof of claim (i), we know that $\Exp[\sL^{\Lambda}_{s,{k}}]$  converges to $\sL_s^{\Lambda *}$. Using the first term of \eqref{eq:gamma-conver} yields that
%\[
%\sL_s^{\Lambda *}=\lim_{k\to\infty}\Exp[\sL^{\Lambda}_{s,{k}}]=\lim_{k\to\infty}\Exp[\sL_{s,k}]
%\]
%which, together with the first term of \eqref{eq:expect-conv-Land z}, we have $\Exp[\cL_s(\bar{x},\bar{y})]=\sL_s^{\Lambda *}$ for any $(\bar{x},\bar{y})\in\cC$.

To prove item (iii), let us first show that $\sum_{k=1}^{\infty}W_k<\infty$ almost surely, where
\[
\begin{array}{ll}
\displaystyle W_k:=\frac{\delta_1+L}{2}\|x^{k+1}-x^k\|^2+
\frac{\delta_2\alpha}{2}\|A^T(y^{k+1}-y^k)\|^2+\frac{3\alpha L^2}{2\delta_2}\|x^k-x^{k-1}\|^2\\[10pt]
\quad\quad\quad\displaystyle+\left(\frac{1}{2\delta_1}+\frac{3\alpha}{2\delta_2}\right)\|\nabla f(x^k)-\Sto f_k\|^2+\frac{3\alpha}{2\delta_2}\|\nabla f(x^{k-1})-\Sto f_{k-1}\|^2.
\end{array}
\]
It follows from (\ref{eq:cor-variance-reduced}) that
\[
\Exp[\|\Sto f_k-\nabla f(x^k)\|^2]\leq \frac{1}{\rho}(\Exp[\Lambda_1^k]- \Exp[\Lambda_1^{k+1}])+\kappa(\Exp[\|x^{k+1}-x^k\|^2]+\Exp[\|x^k-x^{k-1}\|^2]),
\]
which, together with the facts that $\Exp[\Lambda_1^k]\to 0$  and $\sum_{k=0}^{\infty}\Exp[\|x^{k+1}-x^k\|^2]<\infty$ from (\ref{eq:gamma-conver}) and (\ref{eq:i-2sto}), indicates that
\[
\sum_{k=1}^{\infty}\Exp[\|\Sto f_k-\nabla f(x^k)\|^2]<\infty,
\]
and hence $\sum_{k=1}^{\infty}\|\Sto f_k-\nabla f(x^k)\|^2<\infty$ almost surely. Therefore, using Proposition \ref{prop:finite-length} we obtain that $\sum_{k=1}^{\infty}W_k<\infty$ almost surely.

In a completely analogous way to (\ref{eq:c1}), we can prove that for any fixed $\omega\in\cA$,
\[
	\begin{aligned}
&	\cL_s(x^{k+1}(\omega),y^{k+1}(\omega))\\
&\leq \cL_s(x^{k}(\omega),y^{k}(\omega))+\frac{\delta_1+L}{2}\|x^{k+1}(\omega)-x^k(\omega)\|^2+
	\frac{\delta_2\alpha}{2}\|A^T(y^{k+1}(\omega)-y^k(\omega))\|^2\\
	&\quad\quad+\frac{3\alpha L^2}{2\delta_2}\|x^k(\omega)-x^{k-1}(\omega)\|^2+\left(\frac{1}{2\delta_1}+\frac{3\alpha}{2\delta_2}\right)\|\nabla f(x^k(\omega))-\Sto f_k(\omega)\|^2\\
&\quad\quad+\frac{3\alpha}{2\delta_2}\|\nabla f(x^{k-1}(\omega))-\Sto f_{k-1}(\omega)\|^2\\
&=\cL_s(x^{k}(\omega),y^{k}(\omega))+W_k(\omega).
    \end{aligned}
\]
Because $\sum_{k=1}^{\infty}W_k<\infty$ almost surely, we have $\sum_{k=1}^{\infty}W_k(\omega)<\infty$. Therefore, from \cite[Proposition A.4.4]{Bertsekas2015} it follows that $\{\cL_s(x^{k}(\omega),y^{k}(\omega))\}$ converges to a finite value.
 Since $\cL_s$ is continuous over $\X \times\dom h^*$, one has from (\ref{eq:conver-subsequence}) that
 \[
 \lim_{q\to\infty} \cL_s(x^{k_q}(\omega),y^{k_q}(\omega))=\cL_s(\bar{x}(\omega),\bar{y}(\omega)).
 \]
Combining these results with the definition of $\cC_{\omega}$, we have that $\cL_s$ is finite and constant on  $\cC_{\omega}$. The proof is completed.
\end{proof}

\begin{remark}\label{rem:usual-sto}
Under the assumptions in Theorem \ref{th:usual-sto}, from item (i) and item (iii),
there exists an event $\cA$ with $\mathbb{P}(\cA)=1$ such that, for all $\omega\in\cA$, $\dist((x^k(\omega),y^k(\omega)),\cC_{\omega})\to 0$ and $\cL_s$ equals to a constant value $\bar{\cL}_s(\omega)$ over $\cC_{\omega}$. Hence, it follows that $\Exp[\cL_s(x^k,y^k)]\to \bar{\cL_s}$ with $\bar{\cL_s}:=\Exp[\bar{\cL}_s(\omega)]$. Further, it follows from (\ref{eq:def-sto-aux}) and $z^k=(x^k,y^k,x^{k+1},x^{k-1},x^{k-2})$ that
\[
\sL_s(z^k):=\cL_s(x^k,y^k)-a\|x^k-x^{k+1}\|^2+b\|x^k-x^{k-1}\|^2+c\|x^{k-1}-x^{k-2}\|^2,
\]
which, together with (\ref{eq:i-coro}), implies that $\Exp[\sL_s(z^k)]\to \bar{\cL_s}$ as $k\to\infty$.
\end{remark}

%On the basis of the above weak convergence, we desire a global convergence of SPPDG. But the {\KL}  property presented in Definition \ref{def:kl} is not applied to the stochastic setting, thus we extend it to the expectation of $\sL_s$ in the following lemma which has been used in \cite{BLZ2021, DTLDS2021} and whose proof can be found in \cite[Lemma C.4]{DTLDS2021}.
%
%\begin{lemma}\label{lem:kl-sto}
%	Let  $\{(x^{k},y^{k})\}$ be generated by \eqref{eq:iter-sto} with a  gradient estimator satisfying Lemma \ref{lem:variance-reduced} and be bounded almost surely. Assume that $\{(x^{k},y^{k})\}$ doesn't converge in finite steps and $\sL_s$ is a  {\KL}  function with \L{}ojasiewicz exponent $\theta$. Then, under Assumption \ref{ass:setting-sto}, there exists a number $K$ and a concave function $\varphi(s)=\sigma s^{1-\theta}$ such that
%	\[
%	\varphi'(\Exp[\sL_{s,k}]-\bar{\sL}_{s,k})\Exp[\dist(0,\partial \sL_{s,k})]\geq 1
%	\]
%	for almost sure $k\geq K$, where $\bar{\sL}_{s,k}$ is a nondecreasing sequence converging to $\sL_s^{\Lambda *}$.
%\end{lemma}

We now present the main theorem of this section about the finite length property and the almost sure convergence of the whole sequence $\{(x^k,y^k)\}$ generated by Algorithm \ref{alg:SPPDG} depending on the {\KL}  property of the Lyapunov function $\sL_s$.
\begin{theorem}\label{th:convergence-sto}
Suppose that Assumption \ref{ass:setting-sto} holds and  $\sL_s$ is  a {\KL}  function with \L{}ojasiewicz exponent $\theta\in[0,1)$. Let the sequence $\{(x^k,y^k)\}$ be  bounded almost surely. Then,
\begin{itemize}
\item[(i)] it holds that
\[
\sum_{k=0}^{\infty}\Exp[\|x^{k+1}-x^k\|]<\infty,\quad \sum_{k=0}^{\infty}\Exp[\|y^{k+1}-y^k\|]<\infty;
\]
\item[(ii)] the sequence $\{(x^k,y^k)\}$ converges almost surely to a random vector $(\bar{x},\bar{y})$, and $(\bar{x},\bar{y})\in \crit\cL_s$ a.s.
\end{itemize}
\end{theorem}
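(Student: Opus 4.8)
The plan is to reduce statement (ii) to statement (i) and to concentrate all the real work in (i). Granting the expected finite-length bounds in (i), Tonelli's theorem converts $\sum_k\Exp[\|x^{k+1}-x^k\|]<\infty$ into $\Exp\big[\sum_k\|x^{k+1}-x^k\|\big]<\infty$, so the nonnegative random series $\sum_k\|x^{k+1}-x^k\|$ is finite almost surely; hence $\{x^k\}$ is a.s.\ Cauchy and converges a.s.\ to some random $\bar x$, and likewise $\{y^k\}$ converges a.s.\ to some $\bar y$. Along a.e.\ sample path the limit is then the unique cluster point of $\{(x^k,y^k)\}$, so Theorem~\ref{th:usual-sto}(ii) (together with Lemma~\ref{lem:crit-relation-sto}) gives $(\bar x,\bar y)\in\crit\cL_s$ almost surely. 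So I focus on (i).

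For (i) I would first record, from Lemma~\ref{lem:sto-descent} and the lower boundedness of $\cL_s$ combined with the a.s.\ boundedness of $\{x^k\}$, that $\{\Exp[\sL^{\Lambda}_{s,k}]\}$ is nonincreasing and converges to the value $\bar{\cL}_s$ of Remark~\ref{rem:usual-sto}. The case in which $\Exp[\sL^{\Lambda}_{s,k_0}]=\bar{\cL}_s$ at a finite $k_0$ is degenerate: then \eqref{eq:sto-descent} forces the expected squared increments to vanish for all $k\ge k_0$, so the iterates essentially stop moving and the series are finite, exactly as in the finite-termination branch of the proof of Theorem~\ref{th:converge}; I therefore assume $\Exp[\sL^{\Lambda}_{s,k}]>\bar{\cL}_s$ for every $k$. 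Next I would assemble an expected subgradient bound: starting from \eqref{eq:gradient-bounded-sto}, I use \eqref{eq:geometric-decay} to replace $\Lambda_1^k$ by a telescoping difference $\tfrac1\rho(\Exp[\Lambda_1^k]-\Exp[\Lambda_1^{k+1}])$ plus expected squared increments, and invoke Lemma~\ref{lem:lambda-x-sto} to control $\Exp[\|y^k-y^{k-1}\|^2]$, obtaining a bound on $\Exp[\|d^k\|^2]$ purely in terms of expected squared increments and summable $\Lambda_1$-differences, all finite by Proposition~\ref{prop:finite-length} and \eqref{eq:gamma-conver}.

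The core step follows the template of \eqref{eq:a2}--\eqref{eq:sum-x-2}, but the concavity of $\varphi(s)=\sigma s^{1-\theta}$ must be exploited through \emph{conditional} Jensen, which points in the favorable direction. Writing $H_k:=\sL^{\Lambda}_{s,k}$ and $S_k:=\|x^{k+1}-x^k\|^2+\|x^k-x^{k-1}\|^2+\|x^{k-1}-x^{k-2}\|^2$, the conditional-expectation estimate underlying Lemma~\ref{lem:sto-descent} combined with $\Exp_k[\varphi(H_{k+1})]\le\varphi(\Exp_k[H_{k+1}])$ yields, schematically,
\[
\Exp\big[\varphi(H_{k+1}-\bar{\cL}_s)\big]\ \le\ \Exp\big[\varphi(H_{k}-\bar{\cL}_s)\big]-e_0\,\Exp\big[\varphi'(H_{k}-\bar{\cL}_s)\,S_k\big].
\]
The pointwise \L{}ojasiewicz inequality then replaces $\varphi'(H_k-\bar{\cL}_s)$ by $1/\dist(0,\partial\sL_s(z^k))\ge 1/\|d^k\|$, and an AM--GM split against the $\ell^2$-summable increments (exactly as in passing from \eqref{eq:a2} to \eqref{eq:sum-x}) converts the resulting ``squared increments $\lesssim$ telescoping term $\times$ increments'' relation into a telescoping bound on $\Exp[\|x^{k+1}-x^k\|]$. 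Summing to $n$, letting $n\to\infty$, and using $\Exp[\|x^{k+1}-x^k\|^2]\to0$ from \eqref{eq:i-coro} gives $\sum_k\Exp[\|x^{k+1}-x^k\|]<\infty$; the companion bound for $\{y^k\}$ follows from Lemma~\ref{lem:lambda-x-sto}, \eqref{eq:hat-lambda} and \eqref{eq:gamma-conver}.

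The hard part will be marrying the \emph{pointwise} KL inequality to the \emph{stochastic} descent. Only an expected (not sample-path) sufficient decrease of $\sL_s$ is available, because the coupling term $\|A^T(y^{k+1}-y^k)\|^2$ is tamed only in conditional expectation via Lemma~\ref{lem:lambda-x-sto}; yet the KL inequality and the bound $\dist(0,\partial\sL_s(z^k))\le\|d^k\|$ live on the random iterates and involve the possibly sample-path-dependent limit value $\bar{\cL}_s(\omega)$. A naive full-expectation argument is blocked because Jensen for the concave $\varphi$ and Cauchy--Schwarz from $\Exp[\|d^k\|^2]$ to $\Exp[\|d^k\|]$ both point the wrong way; the route that succeeds is the conditional almost-supermartingale above, where conditional Jensen restores the correct direction. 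The remaining delicacy is that the sample-path subgradient bound $\|d^k\|$ inevitably carries variance-error terms $\|\Sto f_j-\nabla f(x^j)\|$ that do not vanish along sample paths and are only $\ell^2$-summable in expectation, so they must be absorbed into the AM--GM/telescoping step against the increments whose $\ell^2$-summability is already secured by Proposition~\ref{prop:finite-length}, thereby upgrading $\ell^2$- to $\ell^1$-summability in expectation.
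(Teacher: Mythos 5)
Your reduction of (ii) to (i) is exactly the paper's own route (Tonelli, a.s.\ Cauchy, then Theorem~\ref{th:usual-sto} and Lemma~\ref{lem:crit-relation-sto}), and much of your periphery also matches the paper: the telescoping of $\Lambda_1$ via the geometric decay \eqref{eq:geometric-decay}, the AM--GM split, the moment bound on $d^k$ from Lemma~\ref{lem:gradient-bounded-sto}, and the transfer to the $y$-series via Lemma~\ref{lem:lambda-x-sto} and \eqref{eq:hat-lambda}. But the core of (i) --- how the KL property is married to the stochastic descent --- is where your argument has genuine gaps, in three concrete places. First, the conditional almost-supermartingale you posit, schematically $\Exp_k[H_{k+1}]\le H_k-e_0 S_k$ with $H_k=\sL_{s,k}^{\Lambda}$, is not available: $H_k$ contains $\sL_s(z^k)$ with $z^k=(x^k,y^k,x^{k+1},x^{k-1},x^{k-2})$ and the term $e_1\Lambda_1^{k+1}$, so neither $H_k$ nor your $S_k$ is $\cF_k$-measurable; Lemma~\ref{lem:sto-descent} is proved by conditioning on $\cF_{k-1}$ with conditional expectations appearing on \emph{both} sides, and only the full-expectation inequality \eqref{eq:sto-descent} survives --- conditional Jensen has nothing to bite on. Second, the pointwise KL inequality holds at $z^k(\omega)$ relative to the \emph{random} limit value $\bar{\cL}_s(\omega)$ of Remark~\ref{rem:usual-sto} and only past a random, $\omega$-dependent onset index, whereas your scheme uses the deterministic average $\bar{\cL}_s=\Exp[\bar{\cL}_s(\omega)]$; on a set of positive probability $H_k(\omega)-\bar{\cL}_s$ can be negative, so $\varphi(H_k-\bar{\cL}_s)$ and $\varphi'(H_k-\bar{\cL}_s)$ need not even be defined, and there is no deterministic $K$ past which the pointwise inequality can be integrated. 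Third, even formally, your substitution $\varphi'(H_k-\bar{\cL}_s)\ge 1/\dist(0,\partial\sL_s(z^k))$ points the wrong way: since the $\Lambda_1$-terms are nonnegative, $H_k\ge\sL_s(z^k)$ and $\varphi'$ is decreasing, so KL lower-bounds $\varphi'(\sL_s(z^k)-\cdot)$, not $\varphi'(H_k-\cdot)$; and even granting it, you would then need a lower bound on $\Exp[S_k/\|d^k\|]$ in terms of $\Exp[S_k]$ and $\Exp[\|d^k\|^2]$, which Cauchy--Schwarz does not deliver (it produces the uncontrolled cross-moment $\Exp[S_k\|d^k\|]$).

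The paper sidesteps all three obstructions at once by importing an \emph{expected} KL inequality, \cite[Lemma 4.5]{DTLDS2021}: there exist $K_0$ and $\varphi_0(s)=\sigma_0 s^{1-\theta}$ with $\varphi_0'(\Exp[\sL_s(z^k)]-\bar{\sL}_{s,k})\,\Exp[\dist(0,\partial\sL_s(z^k))]\ge 1$ for $k\ge K_0$, where $\{\bar{\sL}_{s,k}\}$ is a \emph{nondecreasing deterministic} sequence increasing to $\bar{\cL}_s$ with $\Exp[\sL_s(z^k)]-\bar{\sL}_{s,k}>0$ (see \eqref{eq:kl-sto-0}); all subsequent manipulations (concavity of $\varphi_1$, telescoping, AM--GM) are then performed on the deterministic real sequence $\Exp[\sL_{s,k}^{\Lambda}]-\bar{\sL}_{s,k}$, so ordinary concavity suffices and no conditional Jensen is needed. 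The paper also needs one step your sketch omits entirely: upgrading \eqref{eq:kl-sto-0} from $\Exp[\sL_s(z^k)]$ to the $\Lambda$-augmented Lyapunov sequence, which is done by showing $(\Exp[e_1\Lambda_1^{k+1}+e_2\Lambda_1^k+e_3\Lambda_1^{k-1}])^{\theta}\le\beta_0\Sigma_k$ and rescaling $\varphi_0$ by $\beta$ in \eqref{eq:cond-pre-5}. Unless you either establish a genuine conditional descent for an $\cF_k$-adapted surrogate Lyapunov function together with a sample-path KL argument with random onset (a substantial project in itself), or invoke an expected-KL lemma of the \cite{DTLDS2021} type as the paper does, your proof of item (i) does not close.
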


\begin{proof}
Let us begin with the proof of a simple fact that $\sum_{k=0}^{\infty}\Exp[\|x^{k+1}-x^k\|]<\infty$ and $\sum_{k=0}^{\infty}\Exp[\|y^{k+1}-y^k\|]<\infty$ if $\sum_{k=0}^{\infty}\sqrt{\Exp[\|x^{k+1}-x^k\|^2]}<\infty$. In other words, if this fact is true, in order to derive item (i), it is sufficient to prove $\sum_{k=0}^{\infty}\sqrt{\Exp[\|x^{k+1}-x^k\|^2]}<\infty$. Indeed, by Jensen's inequality,  $\sum_{k=0}^{\infty}\Exp[\|x^{k+1}-x^k\|]<\infty$ is obvious. By \eqref{eq:i-4sto} (with $k=k-1$) and  $\sqrt{a+b}\leq\sqrt{a}+\sqrt{b}$, there exists a constant $\gamma_6>0$ %$\gamma_6=\max\{\frac{2}{{\hat{\lambda}}}\sqrt{(\frac{1}{\alpha}+L)^2+2\kappa}, \frac{2}{\hat{\lambda}\sqrt{\rho}}\}$
such that
\be\label{eq:conver-y}
\sqrt{\Exp[\|y^k-y^{k-1}\|^2]}\leq \gamma_6(\sqrt{\Exp[\|x^{k+1}-x^{k}\|^2]}+\sqrt{\Exp[\|x^k-x^{k-1}\|^2]}+\sqrt{\Exp[\|x^{k-1}-x^{k-2}\|^2]}+\sqrt{\Exp[\Lambda_1^{k-1}]}).
\ee
Using inequalities \eqref{eq:geometric-decay}, $\sqrt{a+b}\leq\sqrt{a}+\sqrt{b}$ and $\sqrt{1-\rho}\leq1-\frac{\rho}{2}$, it follows that
\be\label{eq:Lambda1^k}
\begin{aligned}
\sqrt{\Exp[\Lambda_1^k]}\leq\ &\sqrt{(1-\rho)\Exp[\Lambda_1^{k-1}]+\sigma_{\Lambda}(\Exp[\|x^{k}-x^{k-1}\|^2]+\Exp[\|x^{k-1}-x^{k-2}\|^2])}\\
\leq\ & (1-\frac{\rho}{2})\sqrt{\Exp[\Lambda_1^{k-1}]}+\sqrt{\sigma_{\Lambda}\Exp[\|x^{k}-x^{k-1}\|^2]}+\sqrt{\sigma_{\Lambda}\Exp[\|x^{k-1}-x^{k-2}\|^2]}.
\end{aligned}
\ee
Rearranging this inequality, we obtain
\be\label{eq:cond-pre-3}
\sqrt{\Exp[\Lambda_1^{k-1}]}\leq\frac{2}{\rho}(\sqrt{\Exp[\Lambda_1^{k-1}]}-\sqrt{\Exp[\Lambda_1^{k}]})+\frac{2}{\rho}\sqrt{\sigma_{\Lambda}\Exp[\|x^{k}-x^{k-1}\|^2]}+\frac{2}{\rho}\sqrt{\sigma_{\Lambda}\Exp[\|x^{k-1}-x^{k-2}\|^2]}.
\ee
Therefore, by substituting \eqref{eq:cond-pre-3} into \eqref{eq:conver-y}, we have
\[
\sum_{k=0}^{\infty}\Exp[\|y^{k+1}-y^{k}\|]\leq\sum_{k=0}^{\infty}\sqrt{\Exp[\|y^{k+1}-y^{k}\|^2]}<\infty.
\]
Hence, the simple fact is proved.

We next prove that $\sum_{k=0}^{\infty}\sqrt{\Exp[\|x^{k+1}-x^k\|^2]}<\infty$.
By \cite[Lemma 4.5]{DTLDS2021}, if $\sL_s$ is  a {\KL}  function with exponent $\theta$, there exist an integer $K_0$ and a function $\varphi_0(s)=\sigma_0 s^{1-\theta}$ such that the following holds
\be\label{eq:kl-sto-0}
\varphi_0'(\Exp[\sL_{s}(z^k)]-\bar{\sL}_{s,k})\Exp[\dist(0,\partial \sL_{s}(z^k))]\geq 1, \ \forall k\geq K_0,
\ee
where $\{\bar{\sL}_{s,k}\}$ is a nondecreasing sequence satisfying $\Exp[\sL_{s}(z^k)]-\bar{\sL}_{s,k}>0$ and converging to a finite value $\bar{\cL}_s$ which is given in Remark \ref{rem:usual-sto}.

When $\theta=0$, we show that $\Exp[\sL_{s,k}^{\Lambda}]=\bar{\cL}_s$ holds after a finite number of iterations by contradiction. Otherwise, inequality (\ref{eq:kl-sto-0}) implies that
\be\label{eq:a5}
\Exp[\dist(0,\partial \sL_{s}(z^k))]\geq \frac{1}{\sigma_0}, \ \forall k\geq K_0.
\ee
From (\ref{eq:a5}), \eqref{eq:gradient square}  (letting $k_q=k$) and Jensen's inequality, we have
	\[
	\begin{array}{ll}
		\frac{1}{\sigma_0^2}&\leq(\Exp[\dist(0,\partial \sL_{s}(z^k))])^2\\[8pt]
		&\leq r(\Exp[\|x^{k+1}-x^k\|^2+\|x^k-x^{k-1}\|^2+\|x^{k-1}-x^{k-2}\|^2+\|y^k-y^{k-1}\|^2+\Lambda_1^k]).
	\end{array}
	\]
	Applying this inequality to (\ref{eq:sto-descent}), we have
	\[
	\begin{array}{ll}
		\Exp[\sL_{s,k}^{\Lambda}]&\leq\Exp[\sL_{s,k-1}^{\Lambda}]-e_0\Exp[\|x^{k+1}-x^k\|^2+\|x^k-x^{k-1}\|^2+\|x^{k-1}-x^{k-2}\|^2]\\[8pt]
		&\leq \Exp[\sL_{s,k-1}^{\Lambda}]-\frac{e_0}{r\sigma_0^2}+e_0\Exp[\|y^k-y^{k-1}\|^2]+e_0\Exp[\Lambda_1^k],
	\end{array}
	\]
	which is impossible after a large enough number of iterations by noticing that $\Exp[\|y^k-y^{k-1}\|^2]\to 0$ (cf. \ref{eq:i-coro2}), $\Exp[\Lambda_1^k]\to 0$ (cf. \ref{eq:gamma-conver}) and $\Exp[\sL_{s,k}^{\Lambda}]\to \bar{\cL}_s$ (cf. Remark \ref{rem:usual-sto}). Therefore, there exists an integer $\bar{K}\geq 0$ such that $\Exp[\sL_{s,k}^{\Lambda}]=\bar{\cL}_s$ holds for $k\geq \bar{K}$. In view of (\ref{eq:sto-descent}), we have $\Exp[\|x^k-x^{k-1}\|^2]=0$ for $k\geq \bar{K}$, and hence $\sum_{k=0}^{\infty}\sqrt{\Exp[\|x^{k+1}-x^k\|^2]}<\infty$.

	We now consider $\theta\in[\frac{1}{2},1)$. By  \eqref{eq:gradient square}, Jensen's inequality and $\sqrt{a+b}\leq \sqrt{a}+\sqrt{b}$, it holds that
	\be\label{eq:cond-pre-1}
	\begin{aligned}
		\Exp[\dist(0,\partial\sL_{s}(z^k))]
		\leq\ &  \sqrt{r}(\sqrt{\Exp[\|x^{k+1}-x^{k}\|^2]}+\sqrt{\Exp[\|x^k-x^{k-1}\|^2]}+\sqrt{\Exp[\|y^k-y^{k-1}\|^2]}\\
		&+\sqrt{\Exp[\|x^{k-1}-x^{k-2}\|^2]}+\sqrt{\Exp[\Lambda_1^k]}).
	\end{aligned}
	\ee
	Substituting (\ref{eq:conver-y}) into \eqref{eq:cond-pre-1},
	then we have
	\be\label{eq:cond-pre-2}
	\begin{aligned}
		\Exp[\dist(0,\partial \sL_{s}(z^k))]%\leq \ &\bar{\gamma}(\sqrt{\Exp[\|x^{k+1}-x^{k}\|^2]}+\sqrt{\Exp[\|x^k-x^{k-1}\|^2]}+\sqrt{\Exp[\|x^{k-1}-x^{k-2}\|^2]})\\
		%&+\bar{\gamma}\sqrt{\Exp[\Lambda_1^{k-1}]}+\sqrt{\Exp[(\Lambda_2^k)^2]}.
		\leq\ &(\sqrt{r}+\gamma_6\sqrt{r})\left(\sqrt{\Exp[\|x^{k+1}-x^{k}\|^2]}+\sqrt{\Exp[\|x^k-x^{k-1}\|^2]}+\sqrt{\Exp[\|x^{k-1}-x^{k-2}\|^2]}\right)\\
		&+\gamma_6\sqrt{r}\sqrt{\Exp[\Lambda_1^{k-1}]}+\sqrt{r\Exp[\Lambda_1^k]}.
	\end{aligned}
	\ee
	Applying  \eqref{eq:cond-pre-3} to the last two terms in \eqref{eq:cond-pre-2}, respectively, and letting $\gamma:= \sqrt{r}+\gamma_6\sqrt{r}+\frac{2\sqrt{r\sigma_{\Lambda}}}{\rho}+\frac{2\gamma_6\sqrt{r\sigma_{\Lambda}}}{\rho}$, one has
	\bee
	\begin{aligned}
		\Exp[\dist(0,\partial \sL_{s}(z^k))]
		\leq\ & \gamma\left(\sqrt{\Exp[\|x^{k+1}-x^{k}\|^2]}+\sqrt{\Exp[\|x^{k}-x^{k-1}\|^2]}+\sqrt{\Exp[\|x^{k-1}-x^{k-2}\|^2]}\right)\\
		&+\frac{2\gamma_6\sqrt{r}}{\rho}\left(\sqrt{\Exp[\Lambda_1^{k-1}]}-\sqrt{\Exp[\Lambda_1^{k}]}\right)+\frac{2\sqrt{r}}{\rho}\left(\sqrt{\Exp[\Lambda_1^{k}]}-\sqrt{\Exp[\Lambda_1^{k+1}]}\right).
	\end{aligned}
	\eee
Denote by $\Sigma_k$ the right-hand side of the above inequality. Obviously, $\Sigma_k>0$.  Then, combining the inequality $\Exp[\dist(0,\partial \sL_{s}(z^k))]\leq \Sigma_k$ with \eqref{eq:kl-sto-0} and $\varphi_0(s)=\sigma_0 s^{1-\theta}$ gives that
\be\label{eq:cond-pre-4}
\frac{\sigma_0(1-\theta)\Sigma_k}{(\Exp[\sL_{s}(z^k)]-\bar{\sL}_{s,k})^{\theta}}\geq 1, \ \forall k\geq K_0.
\ee
 %Let $\sL_{s,k}^{\Lambda}=\sL_{s,k}+c_8\Lambda_1^{k+1}+c_9\Lambda_1^{k}+c_{10}\Lambda_1^{k-1}$.
Note that, for $\theta\in[\frac{1}{2}, 1)$, there exist positive constants $\beta_0$, $\kappa_2,\kappa_3$  and a sufficiently large integer $K_1>0$ such that for $k\geq K_1$,
\bee
\begin{aligned}
&(\Exp[e_1\Lambda_1^{k+1}+e_2\Lambda_1^{k}+e_3\Lambda_1^{k-1}])^{\theta}
\leq \kappa_2(\Exp[\Lambda_1^{k+1}+\Lambda_1^{k}+\Lambda_1^{k-1}])^{\theta}
\leq \kappa_2\sqrt{\Exp[\Lambda_1^{k+1}+\Lambda_1^{k}+\Lambda_1^{k-1}]}\\
&\leq  \kappa_3(\sqrt{\Exp[\Lambda_1^{k}]}+\sqrt{\Exp[\Lambda_1^{k-1}]}+\sqrt{\Exp[\|x^{k+1}-x^{k}\|^2]}+\sqrt{\Exp[\|x^{k}-x^{k-1}\|^2]})  \leq \beta_0\Sigma_k,
\end{aligned}
\eee
where the second inequality is deduced from $\Exp[\Lambda_1^k]\to 0$ for $k\to\infty$ (cf. Theorem \ref{th:usual-sto}), the third inequality is obtained by $\sqrt{a+b}\leq\sqrt{a}+\sqrt{b}$ and \eqref{eq:Lambda1^k}, and the last inequality is from \eqref{eq:cond-pre-3} and the definition of $\Sigma_k$. Take a constant $\beta>0$ such that $\beta\sigma_0(1-\theta)\geq\sigma_0(1-\theta)+\beta_0$. Then, from \eqref{eq:cond-pre-4} and the fact that $(a+b)^{\theta}\leq a^\theta+b^\theta$ for $\theta\in[\frac{1}{2},1]$, it holds for $k\geq K:=\max\{K_0,K_1\}$,
\be\label{eq:cond-pre-5}
\begin{aligned}
\displaystyle\frac{\beta\sigma_0(1-\theta)\Sigma_k}{(\Exp[\sL_{s,k}^{\Lambda}]-\bar{\sL}_{s,k})^{\theta}}
&\displaystyle\geq\frac{\beta\sigma_0(1-\theta)\Sigma_k}{(\Exp[\sL_{s}(z^k)]-\bar{\sL}_{s,k})^{\theta}+(\Exp[e_1\Lambda_1^{k+1}+e_2\Lambda_1^{k}+e_{3}\Lambda_1^{k-1}])^{\theta}}\\
&\geq \displaystyle\frac{\beta\sigma_0(1-\theta)\Sigma_k}{\sigma_0(1-\theta)\Sigma_k+\beta_0\Sigma_k}\geq 1.
\end{aligned}
\ee
 Thus, let $\varphi_1(s):=\beta\sigma_0 s^{1-\theta}$, for any $k\geq K$, (\ref{eq:cond-pre-5}) is rewritten as
\be\label{eq:cond-2}
\varphi_1'(\Exp[\sL_{s,k}^{\Lambda}]-\bar{\sL}_{s,k})\Sigma_k\geq 1.
\ee
 Since $\varphi_1$ is concave, we have
\be\label{eq:b1}
\begin{aligned}
\varphi_1(\Exp[\sL_{s,k+1}^{\Lambda}]-\bar{\sL}_{s,k+1})\leq\ & \varphi_1(\Exp[\sL_{s,k}^{\Lambda}]-\bar{\sL}_{s,k})+\varphi_1'(\Exp[\sL_{s,k}^{\Lambda}]-\bar{\sL}_{s,k})\Exp[\sL_{s,k+1}^{\Lambda}-\bar{\sL}_{s,k+1}-\sL_{s,k}^{\Lambda}+\bar{\sL}_{s,k}]\\
\leq\ & \varphi_1(\Exp[\sL_{s,k}^{\Lambda}]-\bar{\sL}_{s,k})+\varphi_1'(\Exp[\sL_{s,k}^{\Lambda}]-\bar{\sL}_{s,k})\Exp[\sL_{s,k+1}^{\Lambda}-\sL_{s,k}^{\Lambda}]\\
\leq\ &
\varphi_1(\Exp[\sL_{s,k}^{\Lambda}]-\bar{\sL}_{s,k})-\frac{e_0}{\Sigma_k}\Exp[\|x^{k+2}-x^{k+1}\|^2+\|x^{k+1}-x^{k}\|^2+\|x^{k}-x^{k-1}\|^2],
\end{aligned}
\ee
where the second inequality is obtained by $\bar{\sL}_{s,k}\leq \bar{\sL}_{s,k+1}$, the third inequality is from Lemma \ref{lem:sto-descent} and (\ref{eq:cond-2}). Let $\cM_{m,n}:=\varphi_1(\Exp[\sL_{s,m}^{\Lambda}]-\bar{\sL}_{s,m})-\varphi_1(\Exp[\sL_{s,n}^{\Lambda}]-\bar{\sL}_{s,n})$, then (\ref{eq:b1}) implies
\[
\cM_{k,k+1}\geq
 \frac{e_0}{\Sigma_k}\Exp[\|x^{k+2}-x^{k+1}\|^2].
\]
Rewriting this inequality and using $4\sqrt{ab}\leq a/\gamma+4\gamma b$ for any $\gamma>0$ yield that
\bee
\begin{aligned}
	4\sqrt{\Exp[\|x^{k+2}-x^{k+1}\|^2]}\leq\ & 4\sqrt{\frac{\cM_{k,k+1}\Sigma_k}{e_0}}\leq \frac{\Sigma_k}{\gamma}+\frac{4\gamma\cM_{k,k+1}}{e_0},
\end{aligned}
\eee
 which, together with the definition of $\Sigma_k$, gives
\bee
\begin{aligned}
	4\sqrt{\Exp[\|x^{k+2}-x^{k+1}\|^2]}\leq\ &\sqrt{\Exp[\|x^{k+1}-x^{k}\|^2]}+\sqrt{\Exp[\|x^{k}-x^{k-1}\|^2]}+\sqrt{\Exp[\|x^{k-1}-x^{k-2}\|^2]}+\frac{4\gamma\cM_{k,k+1}}{e_0}\\
	&+\frac{2\gamma_6\sqrt{r}}{\rho\gamma}\left(\sqrt{\Exp[\Lambda_1^{k-1}]}-\sqrt{\Exp[\Lambda_1^{k}]}\right)+\frac{2\sqrt{r}}{\rho\gamma}\left(\sqrt{\Exp[\Lambda_1^{k}]}-\sqrt{\Exp[\Lambda_1^{k+1}]}\right).
\end{aligned}
\eee
Summing up from $k=K$ to $n$, we have
\be\label{eq:cond-3}
\begin{aligned}
	\sum_{k=K}^{n}\sqrt{\Exp[\|x^{k+2}-x^{k+1}\|^2]}\leq\ & 3\sqrt{\Exp[\|x^{K+1}-x^{K}\|^2]}+2\sqrt{\Exp[\|x^{K}-x^{K-1}\|^2]}+\sqrt{\Exp[\|x^{K-1}-x^{K-2}\|^2]}\\
	&+\sum_{k=K}^{n}\frac{4\gamma\cM_{k,k+1}}{e_0}	+\frac{2\gamma_6\sqrt{r}}{\rho\gamma}\sqrt{\Exp[\Lambda_1^{K-1}]}+\frac{2\sqrt{r}}{\rho\gamma}\sqrt{\Exp[\Lambda_1^{K}]}.
\end{aligned}
\ee
 By the definition of $\cM_{k,k+1}$, it holds that $\sum_{k=K}^{n}\cM_{k,k+1}=\cM_{K,n+1}\leq \varphi_1(\Exp[\sL_{s,K}^{\Lambda}]-\bar{\sL}_{s,K})$.
Let $n\to\infty$ in (\ref{eq:cond-3}), then  it follows that
\[
\sum_{k=0}^{\infty}\sqrt{\Exp[\|x^{k+1}-x^{k}\|^2]}<\infty.
\]

For $\theta\in(0,\frac{1}{2})$, we show that it can be reduced to the case that $\theta=\frac{1}{2}$. Indeed, from Remark \ref{rem:usual-sto}, we can
let $K_0$ be large enough such that $\Exp[\sL_{s}(z^k)]-\bar{\sL}_{s,k}<1$. Then,
since (\ref{eq:kl-sto-0}) holds with  $\theta\in(0,\frac{1}{2})$, we have that (\ref{eq:kl-sto-0}) also holds with $\theta=\frac{1}{2}$. Thus, we can get the claim immediately by following the analysis for the case that $\theta\in[\frac{1}{2},1)$.

Combining these results together, we have that  $\sum_{k=0}^{\infty}\sqrt{\Exp[\|x^{k+1}-x^{k}\|^2]}<\infty$ holds for all $\theta\in[0,1)$, and hence item (i) is derived by the previously mentioned simple fact.

In the proof of Theorem \ref{th:usual-sto}, we have shown that there exists an event $\cA$ with measure 1 such that, for any $\omega\in\cA$, every convergent subsequence of $\{(x^k(\omega),y^k(\omega))\}$ converges to a point $(\bar{x}(\omega),\bar{y}(\omega))$ belonging to $\crit\cL_s$. If follows from item (i) that
\[
\sum_{k=0}^{\infty}\|x^{k+1}-x^k\|<\infty \ \mbox{a.s.},\quad \sum_{k=0}^{\infty}\|y^{k+1}-y^k\|<\infty\ \mbox{a.s.},
\]
and consequently,
\[
\sum_{k=0}^{\infty}\|x^{k+1}(\omega)-x^k(\omega)\|<\infty ,\quad \sum_{k=0}^{\infty}\|y^{k+1}(\omega)-y^k(\omega)\|<\infty.
\]
In other words, $\{(x^k(\omega),y^k(\omega))\}$ is a Cauchy sequence. Thus, the whole sequence $\{(x^k(\omega),y^k(\omega))\}$ converges to $(\bar{x}(\omega),\bar{y}(\omega))$. Therefore, there exists a random vector $(\bar{x},\bar{y})$ such that $\{(\bar{x},\bar{y})\}\in \crit\cL_s$ a.s. and $\{(x^k,y^k)\}$ converges almost surely to $(\bar{x},\bar{y})$.  Item (ii) is proved.
\end{proof}

Finally, we establish
the convergence rates of the sequence $\{(x^k,y^k)\}$ in the context of \L{}ojasiewicz exponent
in the following theorem.
\begin{theorem}\label{th:convergence-rate2}
Suppose that Assumption \ref{ass:setting-sto} holds and  $\sL_s$ is a {\KL}  function with \L{}ojasiewicz exponent $\theta\in[0,1)$. Let the sequence $\{(x^k,y^k)\}$ be  bounded almost surely and $\{(x^k,y^k)\}$ converges almost surely to some random vector $(\bar{x},\bar{y})$.  Then, the following statements hold:
	\begin{itemize}
		\item[(i)] if $\theta=0$, the sequence $\{(x^k,y^k)\}$ converges in expectation after finite steps;
\item[(ii)] if $\theta\in(0,\frac{1}{2}]$, then there exist constants $\nu, \bar{\nu}>0$, $\tau, \bar{\tau}\in(0,1)$ and a sufficiently large integer $K$ such that for $k\geq K$,
		\[\Exp[\|x^{k}-\bar{x}\|]\leq \nu\tau^{k-K}, \quad \Exp[\|y^{k}-\bar{y}\|]\leq \bar{\nu}\bar{\tau}^{k-K};\]
		\item[(iii)] if $\theta\in(\frac{1}{2},1)$, then there exist  constants $\mu,\bar{\mu}>0$ and a sufficiently large integer $\bar{K}$ such that for $k\geq \bar{K}$,
		\[\Exp[\|x^{k}-\bar{x}\|]\leq \mu k^{-{\frac{1-\theta}{2\theta-1}}}, \quad \Exp[\|y^{k}-\bar{y}\|]\leq \bar{\mu} k^{-{\frac{1-\theta}{2\theta-1}}}.\]
	\end{itemize}
\end{theorem}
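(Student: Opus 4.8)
The plan is to reduce all three estimates to controlling the tail sum $\Delta_k:=\sum_{q=k}^{\infty}\sqrt{\Exp[\|x^{q+1}-x^q\|^2]}$, which is finite by Theorem~\ref{th:convergence-sto}(i). Since $x^k\to\bar{x}$ almost surely, $\|x^k-\bar{x}\|\leq\sum_{q\geq k}\|x^{q+1}-x^q\|$ holds a.s., and Jensen's inequality gives $\Exp[\|x^k-\bar{x}\|]\leq\sum_{q\geq k}\Exp[\|x^{q+1}-x^q\|]\leq\Delta_k$; the bound for $\Exp[\|y^k-\bar{y}\|]$ then follows by summing \eqref{eq:conver-y}, which dominates $\sqrt{\Exp[\|y^{q+1}-y^q\|^2]}$ by the step sizes and by $\sqrt{\Exp[\Lambda_1^{q-1}]}$. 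The case $\theta=0$ requires no new work: it was shown inside the proof of Theorem~\ref{th:convergence-sto} that $\Exp[\sL^{\Lambda}_{s,k}]=\bar{\cL}_s$ after finitely many iterations, whence $\Exp[\|x^k-x^{k-1}\|^2]=0$ for all large $k$ and, by \eqref{eq:i-4sto}, the same for $y$; this is item~(i).

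For $\theta\in(0,1)$ I would recycle the recursion already assembled in the proof of Theorem~\ref{th:convergence-sto}. Writing $s_k:=\sqrt{\Exp[\|x^{k+1}-x^k\|^2]}$, $\ell_k:=\sqrt{\Exp[\Lambda_1^k]}$ and $E_k:=\Exp[\sL^{\Lambda}_{s,k}]-\bar{\sL}_{s,k}$, and letting $n\to\infty$ in \eqref{eq:cond-3} (so that $\cM_{k,n+1}\to\varphi_1(E_k)$, using $E_{n+1}\to 0$), one obtains for all large $k$
\[
\Delta_{k+1}\leq 3s_k+2s_{k-1}+s_{k-2}+\tfrac{4\gamma}{e_0}\varphi_1(E_k)+\tfrac{2\gamma_6\sqrt{r}}{\rho\gamma}\ell_{k-1}+\tfrac{2\sqrt{r}}{\rho\gamma}\ell_k ,
\]
with $\varphi_1(s)=\beta\sigma_0 s^{1-\theta}$. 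The expected {\KL}  inequality \eqref{eq:cond-2} (equivalently \eqref{eq:kl-sto-0}) bounds $\varphi_1(E_k)$ by a constant multiple of $\Sigma_k^{(1-\theta)/\theta}$, where $\Sigma_k$ is the combination of $s_k,s_{k-1},s_{k-2}$ and the telescoping $\ell$-differences defined in that proof. Since $s_j=\Delta_j-\Delta_{j+1}$, this yields a self-referential recursion for $\{\Delta_k\}$, up to the $\ell$-terms, that exactly parallels \eqref{eq:trans} of the deterministic Theorem~\ref{th:convergence-rate1}.

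The two regimes are then separated as in Theorem~\ref{th:convergence-rate1}. For $\theta\in(0,\tfrac12]$ the exponent $(1-\theta)/\theta\geq 1$ and $\Sigma_k\to 0$ allow $\Sigma_k^{(1-\theta)/\theta}$ to be replaced by a multiple of $\Sigma_k$ for large $k$, reducing the recursion to $\Delta_{k+1}\leq C(\Delta_{k-2}-\Delta_{k+1})+C'(\ell_{k-1}+\ell_k)$; coupling this with the geometric recursion $\ell_k\leq(1-\tfrac{\rho}{2})\ell_{k-1}+\sqrt{\sigma_{\Lambda}}(s_{k-1}+s_{k-2})$ from \eqref{eq:Lambda1^k} and passing to a combined quantity such as $\Delta_k+\kappa\sum_{j\geq k}\ell_j$ yields geometric decay $\Delta_k\leq\nu\tau^{k-K}$ with $\tau<1$. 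For $\theta\in(\tfrac12,1)$ the exponent lies in $(0,1)$, the recursion becomes $\Delta_{k+1}\leq C(\Delta_{k-2}-\Delta_{k+1})^{(1-\theta)/\theta}$ plus summable telescoping $\ell$-differences, and the power-law argument of \cite[(14)]{AB2009}, already used in Theorem~\ref{th:convergence-rate1}(iii), produces $\Delta_k\leq\mu k^{-(1-\theta)/(2\theta-1)}$. Feeding these bounds back through the first paragraph gives the stated rates for $\Exp[\|x^k-\bar{x}\|]$ and $\Exp[\|y^k-\bar{y}\|]$, the $y$-constants $\bar{\nu},\bar{\tau},\bar{\mu}$ absorbing the factors from \eqref{eq:conver-y}.

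The main obstacle, with no analogue in Theorem~\ref{th:convergence-rate1}, is controlling the stochastic error $\ell_k$. The standalone terms $\ell_{k-1},\ell_k$ on the right of the displayed recursion are first converted, through \eqref{eq:cond-pre-3}, into telescoping differences $\ell_{k-1}-\ell_k$ plus step-size terms $s_{k-1}+s_{k-2}$; the contraction $1-\tfrac{\rho}{2}<1$ in \eqref{eq:Lambda1^k}, together with the fact that $\{\ell_k\}$ is forced only by the step sizes, then shows that in the case $\theta\leq\tfrac12$ the pair $(\Delta_k,\ell_k)$ decays geometrically with a common ratio $\tau=\max\{\tau_0,1-\tfrac{\rho}{2}\}<1$ (where $\tau_0$ comes from the linearized {\KL}  recursion), while in the case $\theta>\tfrac12$ the $\ell$-terms enter only as summable telescoping differences and hence contribute boundary terms to the power-law recursion without affecting the polynomial rate. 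A secondary subtlety, also absent from the deterministic proof, is that $\bar{\sL}_{s,k}$ is merely nondecreasing rather than constant, so the inequality $\bar{\sL}_{s,k}\leq\bar{\sL}_{s,k+1}$ must be invoked, exactly as in the second line of \eqref{eq:b1}, to keep the telescoping of $\cM_{k,k+1}$ intact.
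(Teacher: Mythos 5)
Your proposal is correct and follows essentially the same route as the paper's proof: item (i) recycled from the proof of Theorem \ref{th:convergence-sto}; a tail-sum quantity $\Delta_k$ whose recursion (from \eqref{eq:cond-3}-type summations and the expected {\KL} inequality \eqref{eq:kl-sto-0}/\eqref{eq:cond-2}) parallels \eqref{eq:trans}; absorption of the variance terms $\sqrt{\Exp[\Lambda_1^k]}$ into an augmented quantity via the telescoping bound \eqref{eq:cond-pre-3} (the paper's $\Delta_k^{\Lambda}$, your $\Delta_k+\kappa\sum_{j\geq k}\ell_j$, which are interchangeable); the \cite[(14)]{AB2009}-style power-law step for $\theta\in(\tfrac12,1)$; and the $V_k$-style summation of \eqref{eq:conver-y} for the $y$-rates. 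The only cosmetic deviation is that for $\theta\in(0,\tfrac12)$ you linearize the recursion directly, whereas the paper first invokes the standard reduction to exponent $\theta=\tfrac12$ (needed because the augmented {\KL} bound \eqref{eq:cond-pre-5} uses $t^{\theta}\leq\sqrt{t}$ for small $t$, valid only for $\theta\geq\tfrac12$); since {\KL} with exponent $\theta<\tfrac12$ implies it with exponent $\tfrac12$ near the limit, the two treatments coincide.
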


\begin{proof}
	Item (i) has been presented in the proof of Theorem \ref{th:convergence-sto}.

%We next establish some auxiliary inequalities which holds true when $\theta\in(0,1)$.
Let us point out that, for the case that $\theta\in(0,1/2)$, by the same reason in the proof of Theorem \ref{th:convergence-sto}, the analysis in the following can reduce to the case that $\theta=1/2$.
%Noting that $\Exp[\sL_{s}(z^k)]-\bar{\sL}_{s,k}$ and $\Exp[\Lambda_1^{k}]$ are sufficiently small, when $\theta\in(0,1/2)$, it is not difficult to follow that (\ref{eq:iii-1}) and (\ref{eq:iii-3-kl-1}) also holds with $\theta=1/2$.
Therefore, it is sufficient to consider the case that $\theta\in[1/2,1)$.
Let \[\Delta_k:=\sum_{q=k}^{\infty}\sqrt{\Exp[\|x^{q+1}-x^{q}\|^2]}+\sum_{q=k}^{\infty}\sqrt{\Exp[\|x^{q}-x^{q-1}\|^2]}+\sum_{q=k}^{\infty}\sqrt{\Exp[\|x^{q-1}-x^{q-2}\|^2]}.\]
Noticing that \eqref{eq:cond-3} holds for all $\theta\in[1/2,1)$.	
Similarly, we can also have
	\be\label{eq:cond-5}
	\begin{aligned}
		\sum_{k=K}^{n}\sqrt{\Exp[\|x^{k}-x^{k-1}\|^2]}\leq\ & \sqrt{\Exp[\|x^{n+1}-x^{n}\|^2]}+ \sqrt{\Exp[\|x^{K-1}-x^{K-2}\|^2]}+\sum_{k=K}^{n}\frac{4\gamma\cM_{k,k+1}}{e_0}\\	&+\frac{2\gamma_6\sqrt{r}}{\rho\gamma}\sqrt{\Exp[\Lambda_1^{K-1}]}+\frac{2\sqrt{r}}{\rho\gamma}\sqrt{\Exp[\Lambda_1^{K}]}
	\end{aligned}
	\ee
and 	
	\be\label{eq:cond-4}
	\begin{aligned}
	\sum_{k=K}^{n}\sqrt{\Exp[\|x^{k+1}-x^{k}\|^2]}\leq\ & 2\sqrt{\Exp[\|x^{K}-x^{K-1}\|^2]}+\sqrt{\Exp[\|x^{K-1}-x^{K-2}\|^2]}+\sum_{k=K}^{n}\frac{4\gamma\cM_{k,k+1}}{e_0}\\ &+\frac{2\gamma_6\sqrt{r}}{\rho\gamma}\sqrt{\Exp[\Lambda_1^{K-1}]}+\frac{2\sqrt{r}}{\rho\gamma}\sqrt{\Exp[\Lambda_1^{K}]}.
	\end{aligned}
	\ee	
	Then, combining  \eqref{eq:cond-3}, \eqref{eq:cond-5} and \eqref{eq:cond-4} (let $n\to\infty$), for any $k\geq K$, it follows from the definition of $\cM_{m,n}$ and $\varphi_1(s)=\beta\sigma_0 s^{1-\theta}$ that
	\be\label{eq:iii-1-1}
	\begin{aligned}
		\Delta_{k+1}
\leq& \  4\left(\sqrt{\Exp[\|x^{k+1}-x^{k}\|^2]}+\sqrt{\Exp[\|x^{k}-x^{k-1}\|^2]}+\sqrt{\Exp[\|x^{k-1}-x^{k-2}\|^2]}\right)\\
		&\ +\frac{6\gamma_6\sqrt{r}}{\rho\gamma}\sqrt{\Exp[\Lambda_1^{k-1}]}+\frac{6\sqrt{r}}{\rho\gamma}\sqrt{\Exp[\Lambda_1^{k}]}+\frac{12\gamma}{e_0}\varphi_1(\Exp[\sL_{s,k}^{\Lambda}]-\bar{\sL}_{s,k})\\
		=\ &
	4\left(\sqrt{\Exp[\|x^{k+1}-x^{k}\|^2]}+\sqrt{\Exp[\|x^{k}-x^{k-1}\|^2]}+\sqrt{\Exp[\|x^{k-1}-x^{k-2}\|^2]}\right)\\
	&+\frac{6\gamma_6\sqrt{r}}{\rho\gamma}\sqrt{\Exp[\Lambda_1^{k-1}]}+\frac{6\sqrt{r}}{\rho\gamma}\sqrt{\Exp[\Lambda_1^{k}]}+\beta_1(\Exp[\sL_{s,k}^{\Lambda}]-\bar{\sL}_{s,k})^{1-\theta},
	\end{aligned}	
	\ee	
where $\beta_1:=12\gamma\beta\sigma_0/e_0$.
By the definition of $\sL_{s,k}^{\Lambda}$ %$\sL_{s,k}^{\Lambda}=\sL_{s,k}+e_1\Lambda_1^{k+1}+e_2\Lambda_1^{k}+e_3\Lambda_1^{k-1}$
	and $(a+b)^{1-\theta}\leq a^{1-\theta}+b^{1-\theta}$ for $\theta\in[1/2,1)$, it follows that
	\be\label{eq:iii-2}
	(\Exp[\sL_{s,k}^{\Lambda}]-\bar{\sL}_{s,k})^{1-\theta}\leq (\Exp[\sL_{s}(z^k)]-\bar{\sL}_{s,k})^{1-\theta}+\beta_2(\Exp[\Lambda_1^{k+1}]+\Exp[\Lambda_1^{k}]+\Exp[\Lambda_1^{k-1}])^{1-\theta},
	\ee
	where $\beta_2=\max\{e_1,e_2,e_3\}^{1-\theta}$. Let $\bar{\Sigma}_k$ be the right-hand side of \eqref{eq:cond-pre-2}, then there exists a constant $\beta_4>0$ such that
	\be\label{eq:iii-3-1}
4(\sqrt{\Exp[\|x^{k+1}-x^{k}\|^2]}+\sqrt{\Exp[\|x^{k}-x^{k-1}\|^2]}+\sqrt{\Exp[\|x^{k-1}-x^{k-2}\|^2]})+\frac{6\gamma_6\sqrt{r}}{\rho\gamma}\sqrt{\Exp[\Lambda_1^{k-1}]}+\frac{6\sqrt{r}}{\rho\gamma}\sqrt{\Exp[\Lambda_1^{k}]}\leq \beta_4\bar{\Sigma}_k.
	\ee
Plugging (\ref{eq:iii-2}) and (\ref{eq:iii-3-1}) into (\ref{eq:iii-1-1}) yields that
\be\label{eq:iii-1}
	\Delta_{k+1}\leq\beta_4\bar{\Sigma}_k +	\beta_1 (\Exp[\sL_{s}(z^k)]-\bar{\sL}_{s,k})^{1-\theta}+\beta_1\beta_2(\Exp[\Lambda_1^{k+1}]+\Exp[\Lambda_1^{k}]+\Exp[\Lambda_1^{k-1}])^{1-\theta}.	
\ee		
From (\ref{eq:kl-sto-0}), it follows that
	\be\label{eq:iii-3-kl-1}
	(\Exp[\sL_{s}(z^k)]-\bar{\sL}_{s,k})^{1-\theta}\leq (\sigma_0 (1-\theta)\Exp[\dist(0,\partial \sL_{s}(z^k))])^{\frac{1-\theta}{\theta}}.
	\ee
 Since $2\theta\geq 1$, we have that
	\bee
	\begin{aligned}
		(\Exp[\Lambda_1^{k+1}]+\Exp[\Lambda_1^{k}]+\Exp[\Lambda_1^{k-1}])^{1-\theta}\leq\ & (\Exp[\Lambda_1^{k+1}]+\Exp[\Lambda_1^{k}]+\Exp[\Lambda_1^{k-1}])^{\frac{1-\theta}{2\theta}}\\
		\leq\ & (\sqrt{\Exp[\Lambda_1^{k+1}]}+\sqrt{\Exp[\Lambda_1^{k}]}+\sqrt{\Exp[\Lambda_1^{k-1}]})^{\frac{1-\theta}{\theta}},
	\end{aligned}
	\eee
which further implies that, there exists a constant $\beta_3>0$ such that
	\be\label{eq:iii-4}
	(\Exp[\Lambda_1^{k+1}]+\Exp[\Lambda_1^{k}]+\Exp[\Lambda_1^{k-1}])^{1-\theta}\leq \beta_3\bar{\Sigma}_k^{\frac{1-\theta}{\theta}}.
	\ee
It follows from (\ref{eq:iii-3-kl-1}) and \eqref{eq:cond-pre-2} that
	\be\label{eq:iii-3-kl}
	(\Exp[\sL_{s}(z^k)]-\bar{\sL}_{s,k})^{1-\theta}
	\leq (\sigma_0 (1-\theta)\bar{\Sigma}_k)^{\frac{1-\theta}{\theta}}.
	\ee
Substituting  \eqref{eq:iii-3-kl} and \eqref{eq:iii-4} into (\ref{eq:iii-1})
gives that
	\be\label{eq:iii-5}
			\Delta_{k+1}\leq ((\sigma_0(1-\theta))^{\frac{1-\theta}{\theta}}\beta_1+\beta_1\beta_2\beta_3)\bar{\Sigma}_k^{\frac{1-\theta}{\theta}}+\beta_4\bar{\Sigma}_k\leq\varrho\bar{\Sigma}_k^{\frac{1-\theta}{\theta}},
	\ee
	where the second inequality is derived from $\frac{1-\theta}{\theta}\leq 1$, $\bar{\Sigma}_k\to 0$ and  $\varrho:=(\sigma_0(1-\theta))^{\frac{1-\theta}{\theta}}\beta_1+\beta_1\beta_2\beta_3+\beta_4$. To bound $\bar{\Sigma}_k$, we have from \eqref{eq:Lambda1^k} and \eqref{eq:cond-pre-3} that
	\be\label{eq:iii-8}
	\begin{aligned}
		\bar{\Sigma}_k\leq\  &(\sqrt{r}+\gamma_6\sqrt{r}+\sqrt{r\sigma_{\Lambda}})\left(\sqrt{\Exp[\|x^{k+1}-x^{k}\|^2]}+\sqrt{\Exp[\|x^k-x^{k-1}\|^2]}+\sqrt{\Exp[\|x^{k-1}-x^{k-2}\|^2]}\right)\\
		&+(2\gamma_6\sqrt{r}+(2-\rho)\sqrt{r})\sqrt{\Exp[\Lambda_1^{k-1}]}-(\gamma_6\sqrt{r}+(1-\frac{\rho}{2})\sqrt{r})\sqrt{\Exp[\Lambda_1^{k-1}]}\\
		\leq\  & \beta_5\left(\sqrt{\Exp[\|x^{k+1}-x^{k}\|^2]}+\sqrt{\Exp[\|x^{k}-x^{k-1}\|^2]}+\sqrt{\Exp[\|x^{k-1}-x^{k-2}\|^2]}\right)\\		&+2\beta_6\left(\sqrt{\Exp[\Lambda_1^{k-1}]}-\sqrt{\Exp[\Lambda_1^{k}]}\right)-(\gamma_6\sqrt{r}+(1-\frac{\rho}{2})\sqrt{r})\sqrt{\Exp[\Lambda_1^{k-1}]},
	\end{aligned}
	\ee
	where  $\beta_5:=\sqrt{r}+\gamma_6\sqrt{r}+\frac{4+4\gamma_6-\rho}{\rho}\sqrt{r\sigma_{\Lambda}}$, $\beta_6:=\frac{2+2\gamma_6-\rho}{\rho}\sqrt{r}$.
 Let   $\Delta^{\Lambda}_k:=\Delta_k+\frac{2\beta_6(1-\frac{\rho}{4})}{\beta_5}\sqrt{\Exp[\Lambda_1^{k-1}]}$.
	Then,
	\be\label{eq:iii-6}	(\Delta^{\Lambda}_{k+1})^{\frac{\theta}{1-\theta}}\leq\frac{2^{\frac{\theta}{1-\theta}}}{2}\Delta_{k+1}^{\frac{\theta}{1-\theta}}
+\frac{2^{\frac{\theta}{1-\theta}}}{2}\left(\frac{2\beta_6(1-\frac{\rho}{4})}{\beta_5}\sqrt{\Exp[\Lambda_1^{k}]}\right)^{\frac{\theta}{1-\theta}}\leq \frac{(2\varrho)^{\frac{\theta}{1-\theta}}}{2} \bar{\Sigma}_k+\frac{(\frac{4\beta_6(1-\frac{\rho}{4})}{\beta_5})^{\frac{\theta}{1-\theta}}}{2}\sqrt{\Exp[\Lambda_1^{k}]},
	\ee
	where the first inequality is obtained by using inequalities $\frac{\theta}{1-\theta}\geq 1$ and  $(a+b)^v\leq2^{v-1}a^v+2^{v-1}b^v$ for any $v\geq 1$, the second inequality is from \eqref{eq:iii-5}.
	Substituting \eqref{eq:iii-8} into (\ref{eq:iii-6}) and rearranging terms, we have
\[ (\Delta^{\Lambda}_{k+1})^{\frac{\theta}{1-\theta}}\leq\frac{\beta_5(2\varrho)^{\frac{\theta}{1-\theta}}}{2}(\Delta^{\Lambda}_{k}-\Delta^{\Lambda}_{k+1}),
\]
which further gives
	\be\label{eq:iii-9}
   \Delta^{\Lambda}_{k+1}\leq2\varrho(\frac{\beta_5}{2})^{\frac{1-\theta}{\theta}}(\Delta^{\Lambda}_{k}-\Delta^{\Lambda}_{k+1})^{\frac{1-\theta}{\theta}}.
    \ee

Note that, the    result (\ref{eq:iii-9}) is very similar to \eqref{eq:trans}. Hence, the rest of the proof can be conducted similarly as that of Theorem \ref{th:convergence-rate1}.
%Thus, there exist  constants $\mu>0$ and $\nu=\frac{1-2\theta}{1-\theta}<0$ such that for $k\geq K$
%   \[
%   \Delta_{k+1}^{\nu}-\Delta_{k-1}^{\nu}\geq\mu.
%   \]
%  Summing the above inequality from $k=K$ to $n$, we have
%  \be\label{eq:iii-10}
%  \Delta_{n+1}^{\nu}+\Delta_{n}^{\nu}\geq (n-K+1)\mu+\Delta_{K}^{\nu}+\Delta_{K-1}^{\nu}.
%  \ee
%   Therefore, derived from \eqref{eq:iii-10}
In specific,
if $\theta\in(\frac{1}{2},1)$,  there exist an integer $\bar{K}$, constants $\mu>0$  and $\nu_1=\frac{1-2\theta}{1-\theta}<0$ such that for $n>\bar{K}$,
\[
  \Delta^{\Lambda}_{n}\leq \mu n^{\frac{1}{\nu_1}};
\]
if $\theta\in(0,\frac{1}{2}]$, there exists constants $\nu>0$ and $\tau=\frac{\varrho\beta_5}{1+\varrho\beta_5}<1$ such that for $k\geq K$,
\[
\Delta^{\Lambda}_{k+1}\leq \nu\tau^{k-K}.
\]
Since $\Exp[\|x^k-\bar{x}\|]\leq \Delta^{\Lambda}_{k+1}$, the estimations for $\Exp[\|x^k-\bar{x}\|]$ in (ii) and (iii) are derived.

Finally, we consider the estimations for $\Exp[\|y^k-\bar{y}\|]$.  Combining  \eqref{eq:conver-y} and \eqref{eq:cond-pre-3} yields that
\bee
\begin{aligned}
\sqrt{\Exp[\|y^q-y^{q-1}\|^2]}\leq \ &\gamma_6(1+\frac{2\sqrt{\sigma_{\Lambda}}}{\rho})\left(\sqrt{\Exp[\|x^{q+1}-x^{q}\|^2]}+\sqrt{\Exp[\|x^q-x^{q-1}\|^2]}+\sqrt{\Exp[\|x^{q-1}-x^{q-2}\|^2]}\right)\\
&+\frac{2\gamma_6}{\rho}\left(\sqrt{\Exp[\Lambda_1^{q-1}]} -\sqrt{\Exp[\Lambda_1^{q}]}\right).
\end{aligned}
\eee
Summing up from $q=k$ to $\infty$, we have
\be\label{eq:conv-y-1}
\begin{aligned}
\sum_{q=k}^{\infty}\sqrt{\Exp[\|y^{q+1}-y^{q}\|^2]}\leq \ &\gamma_6(1+\frac{2\sqrt{\sigma_{\Lambda}}}{\rho}) \Delta_{k+1}+\frac{2\gamma_6}{\rho}\sum\limits_{q=k}^{\infty}\left(\sqrt{\Exp[\Lambda_1^{q}]} -\sqrt{\Exp[\Lambda_1^{q+1}]}\right)\\
\leq \ & \gamma_6(1+\frac{2\sqrt{\sigma_{\Lambda}}}{\rho}) \Delta_{k+1}+\frac{2\gamma_6}{\rho}\sqrt{\Exp[\Lambda_1^{k}]}.
\end{aligned}
\ee
Let $V_k:=\gamma_6(1+\frac{2\sqrt{\sigma_{\Lambda}}}{\rho}) \Delta_{k}+\frac{2\gamma_6}{\rho}\sqrt{\Exp[\Lambda_1^{k-1}]}$.
Then by \eqref{eq:iii-1-1}, it holds that
\bee
\begin{aligned}
V_{k+1}\leq \ & 4\gamma_6(1+\frac{2\sqrt{\sigma_{\Lambda}}}{\rho}) \left(\sqrt{\Exp[\|x^{k+1}-x^{k}\|^2]}+\sqrt{\Exp[\|x^{k}-x^{k-1}\|^2]}+\sqrt{\Exp[\|x^{k-1}-x^{k-2}\|^2]}\right)\\
&+\frac{6\gamma_6^2\sqrt{r}}{\rho\gamma}(1+\frac{2\sqrt{\sigma_{\Lambda}}}{\rho})\sqrt{\Exp[\Lambda_1^{k-1}]}+\frac{6\gamma_6\sqrt{r}}{\rho\gamma}(1+\frac{2\sqrt{\sigma_{\Lambda}}}{\rho})\sqrt{\Exp[\Lambda_1^{k}]}\\
&+\gamma_6\beta_1(1+\frac{2\sqrt{\sigma_{\Lambda}}}{\rho})(\Exp[\sL_{s,k}^{\Lambda}]-\bar{\sL}_{s,k})^{1-\theta}+\frac{2\gamma_6}{\rho}\sqrt{\Exp[\Lambda_1^{k}]}.
 \end{aligned}
\eee
By the same line to obtain \eqref{eq:iii-5}, there exists a constant $\varrho'>0$ such that
\bee
V_{k+1}\leq \varrho' \bar{\Sigma}_k^{\frac{1-\theta}{\theta}}.
\eee
Similar to obtain the estimations of $\Delta^{\Lambda}_{k+1}$, for $V^{\Lambda}_{k}:=V_k+\frac{2\beta_6(1-\frac{\rho}{4})}{\beta_5}\sqrt{\Exp[\Lambda_1^{k-1}]}$, we can obtain that
\be\label{eq:V-1}
  V^{\Lambda}_{k}\leq \bar{\mu} k^{\frac{1}{\nu_1}} \text{ for } \theta\in\left(1/2,1\right)
  \ee
 and
 \be\label{eq:V-2}
V^{\Lambda}_{k+1}\leq\bar{\nu}\tau^{k-K} \text{ for } \theta\in\left(0,1/2\right],
 \ee
where $\bar{\mu}$ and $\bar{\nu}$ are some positive constants.  The triangle inequality gives
\[
\Exp[\|y^k-\bar{y}\|]\leq V_{k+1}\leq V^{\Lambda}_{k+1},
\]
which, together with \eqref{eq:V-1} and \eqref{eq:V-2}, implies the estimations for $\Exp[\|y^k-\bar{y}\|]$ in (ii) and (iii).
The proof is completed.
\end{proof}

%-------------------------------------------------------------------------
\section{Preliminary numerical experiments}\label{sec:Numerical Experiments}
In this section, we show the efficiency of our proposed algorithms, and compare them with several  state-of-the-art algorithms on a variety of test problems.
All numerical experiments are carried out using MATLAB R2023a on  a desktop
computer with Intel Core i5 2.5GHz and 32GB memory.

\subsection{Image denoising via $\ell_0$ gradient minimization}
Let $b\in\R^{n\times m}$ represent the noisy input image and $x$ be the result after denoising. The $2D$ discrete gradient operator of $x$ is denoted by $\nabla x$ which is linear.
In this subsection, we focus on the following  $\ell_0$ gradient minimization problem \cite{XLXJ2011,HS2013,TYPC2020}:
\be\label{eq:image-denoising}
\min_{x\in\hat{\cD}}\quad \frac{1}{2}\|x-b\|_F^2+\lambda\|\nabla x\|_0.
\ee
Here,  $\lambda>0$ is a regularization parameter,   and the set $\hat{\cD}=\{x\in\R^{n\times m}: c_1\leq (\nabla x)_{i,j}\leq c_2\}$ for two given constants  $c_1,c_2$.
Problem \eqref{eq:image-denoising} can be expressed  in the form of \eqref{eq:p} with $f(x)=\frac{1}{2}\|x-b\|_F^2$, $h(u)=\lambda\|u\|_0+\cI_{\cD}(u)$ with $\cD=\{u:c_1\leq u_{i,j}\leq c_2\}$ and $\cI_{\cD}(\cdot)$ being the indicator function, and $A$ is the  linear operator associated with $\nabla x$ such that $Ax=\nabla x$..

%Note the fact that computing  the proximal mapping of $h(\nabla x)$ is equivalent to solving a subproblem as the form of \eqref{eq:image-denoising}, therefore, we only compare
In this experiment, we aim to compare the performance of
PPDG with ADMM \cite{Li2015} and PDHG \cite{Thomas2015} for solving problem \eqref{eq:image-denoising}.
Recall that, to apply our algorithm, PPDG, we should calculate $\prox_{h^*}^{M}(y^k+M^{-1}A(2x^{k+1}-x^k))$ with $M=\alpha AA^T$ at each iteration. To avoid computing the inverse of $M$, in practice we calculate the following term as an approximation,
\be\label{eq:proximal-l0a}
\prox_{\beta h^*}\left( y^k+\beta A(2x^{k+1}-x^k)\right),
\ee
where  $ \beta=1/(\alpha\|A\|^2)$. Here, the proximal mapping $\prox_{\beta h^*}(\cdot)$ is computed according to Example \ref{ex:l0}.

The following peak signal-to-noise ratio (PSNR) is used as a measure of the  quality of the denoised image,
\[
\text{PSNR}=10\times \log_{10}\frac{mn(\max{x^k})^2}{\|x^k-x_{org}\|^2},
\]
where  $x_{org}$ is the original image without any noisy, $x^k$ is the output image.
Take $c_1=-1$, $c_2=1$ and  $\lambda=0.1$.
The numerical results are illustrated in Figure \ref{Fig-2} and Figure \ref{Fig-3}. The first row  of Figure \ref{Fig-2} displays the original images\footnote{The images are available in \url{https://www.robots.ox.ac.uk/~vgg/data/}  and  \url{http://www.eecs.qmul.ac.uk/~phao/IP/Images/}}, the second row exhibits the noisy input images with varying levels of noise,  while the third, fourth and fifth rows  show the denoised images by PPDG, ADMM and PDHG, respectively. The values of PSNR for the denoised images and the corresponding running time   are presented in Table \ref{table:psnr}. Figure \ref{Fig-3} provides enlarged views  of specific images from Figure \ref{Fig-2}, allowing readers to see the details clearly.

 We can observe that,   from Figure \ref{Fig-2}, PPDG outperforms ADMM and PDHG in terms of denoising capability, and from Table \ref{table:psnr}, PPDG is superior to  ADMM and PDHG in view of running time. Moreover, PPDG  is comparable to ADMM in terms of PSNR from Table \ref{table:psnr}. Finally, Figure \ref{Fig-3} shows that  PPDG is superior to ADMM with respect to image detail processing.

\begin{figure}[!htp]
	\centering
	\setlength{\belowcaptionskip}{-6pt}
	\begin{tabular}{cccc}
		\subfloat[original image]{
			\includegraphics[width=3.8cm,height=3.2cm]{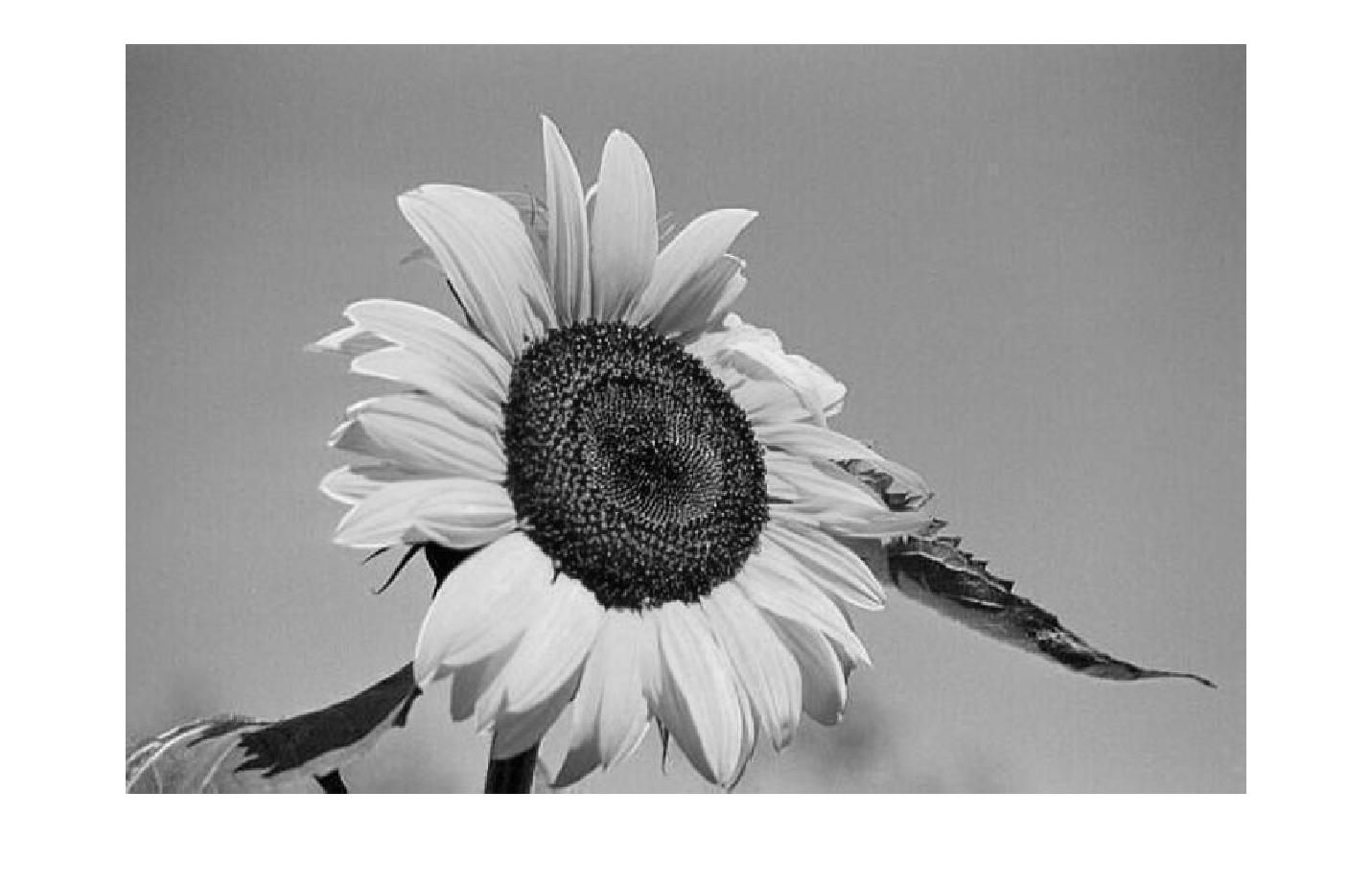}}
    	\subfloat[original image]{
			\includegraphics[width=3.8cm,height=3.2cm]{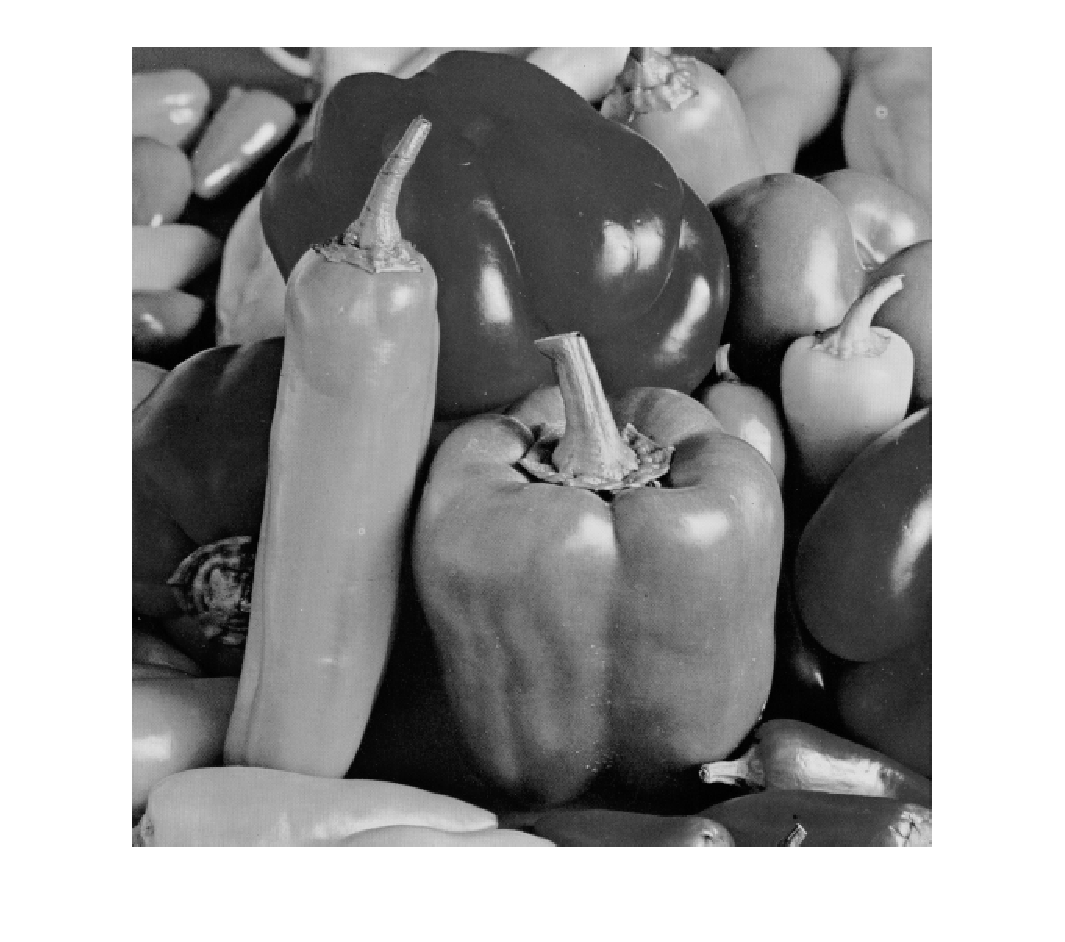}}
		\subfloat[original image]{
			\includegraphics[width=3.8cm,height=3.2cm]{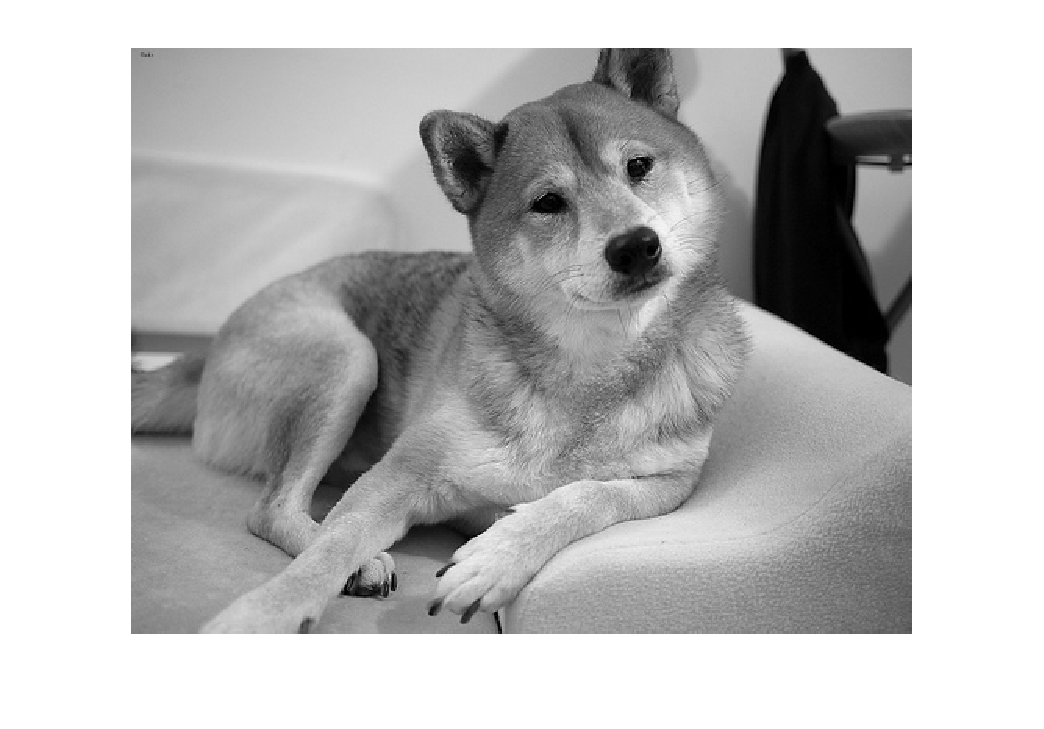}}
		\subfloat[original image]{
			\includegraphics[width=3.8cm,height=3.2cm]{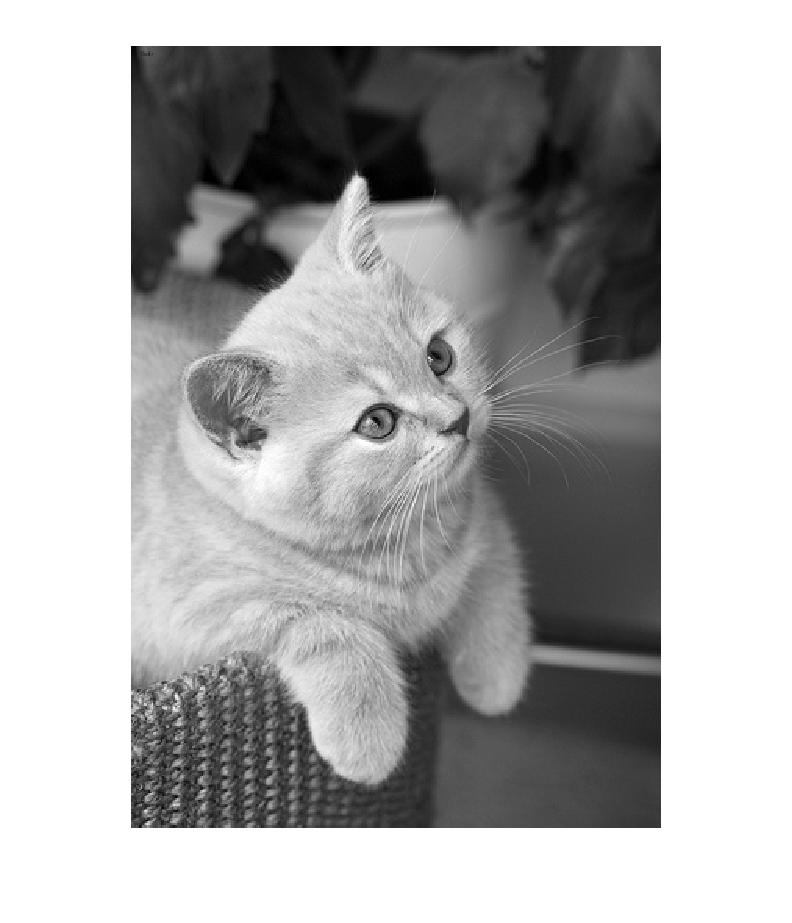}}\\
		\subfloat[noisy image]{
			\includegraphics[width=3.8cm,height=3.2cm]{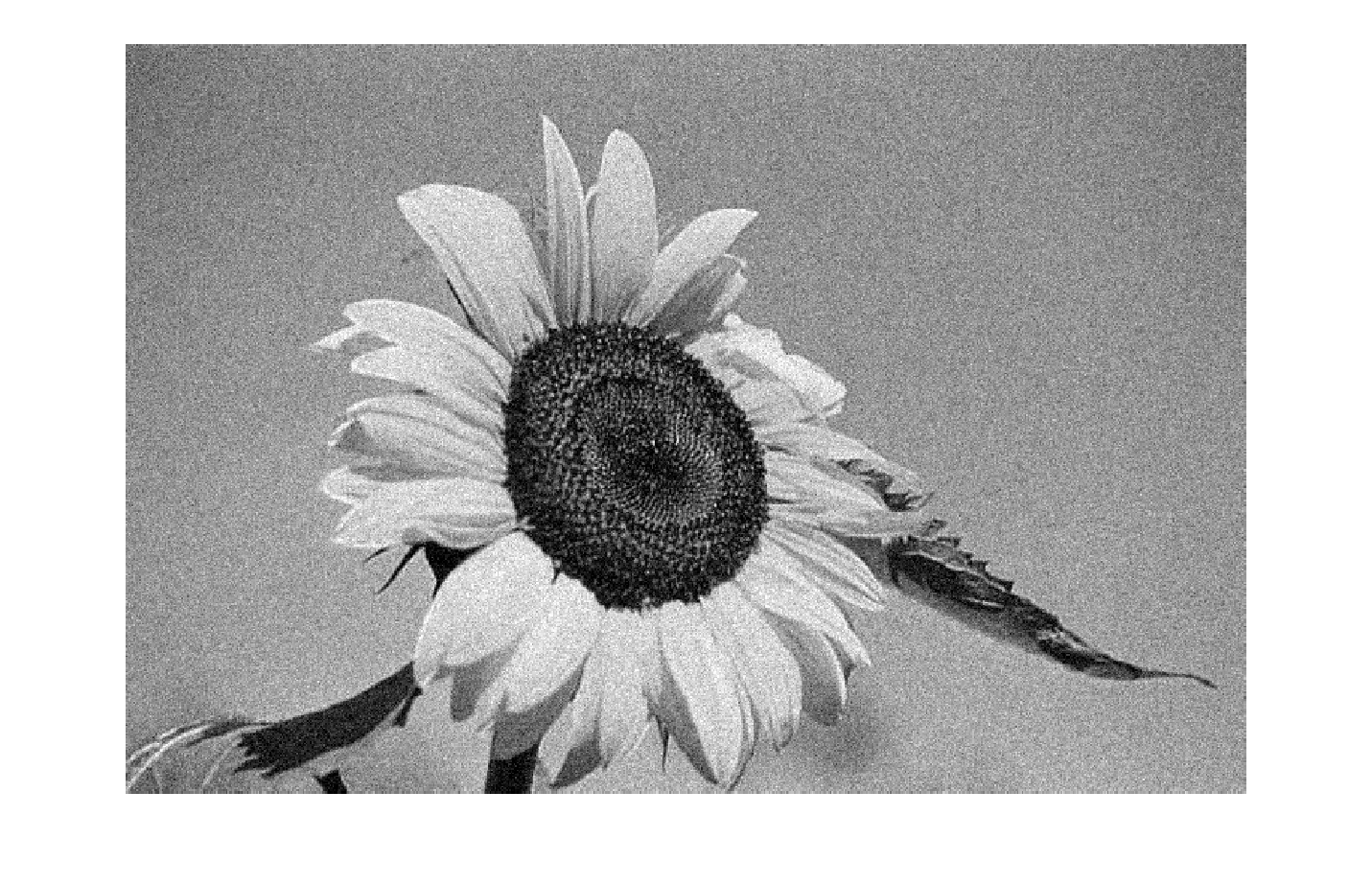}}
		\subfloat[noisy image]{
			\includegraphics[width=3.8cm,height=3.2cm]{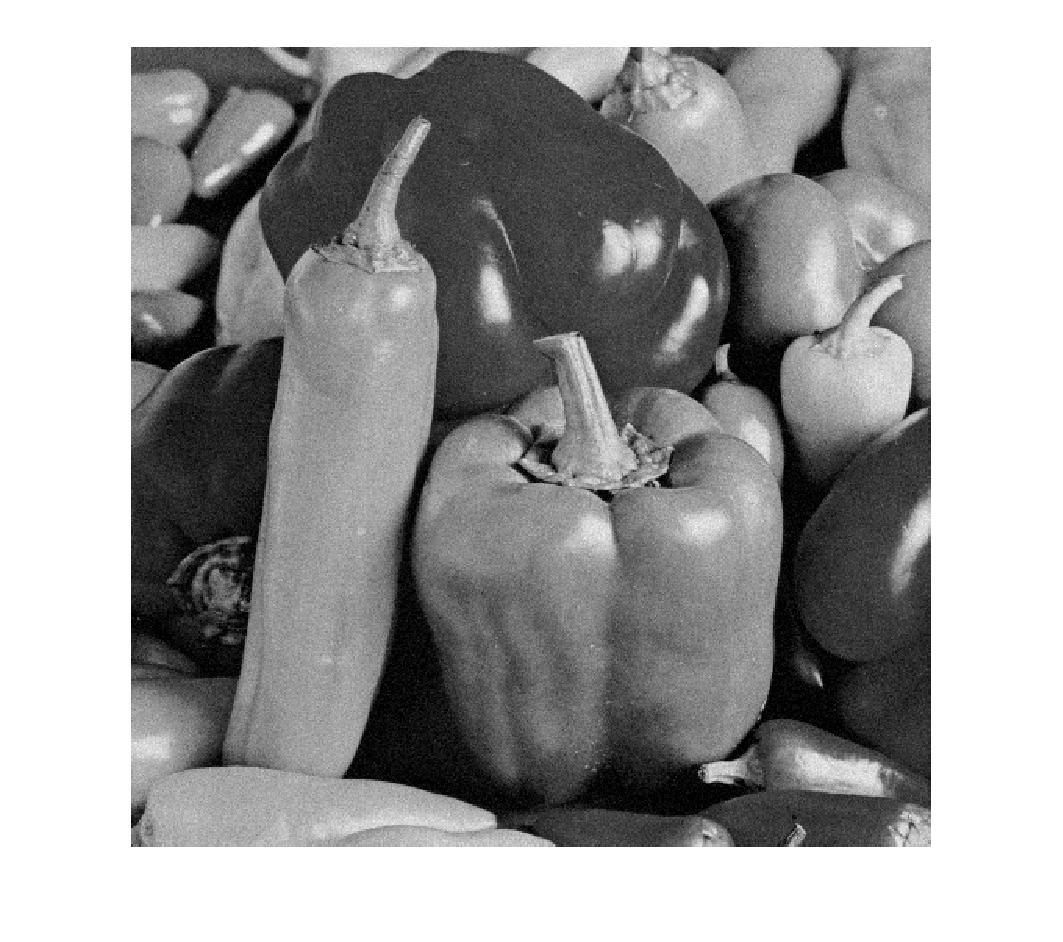}}
		\subfloat[noisy image]{
			\includegraphics[width=3.8cm,height=3.2cm]{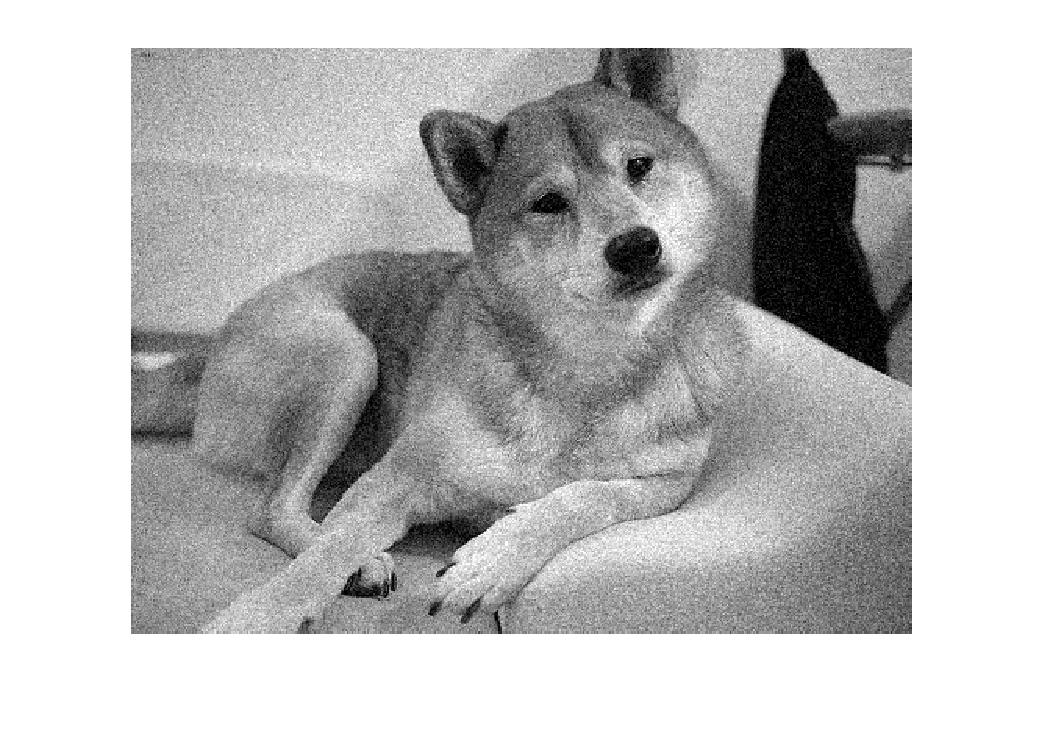}}
		\subfloat[noisy image]{
			\includegraphics[width=3.8cm,height=3.2cm]{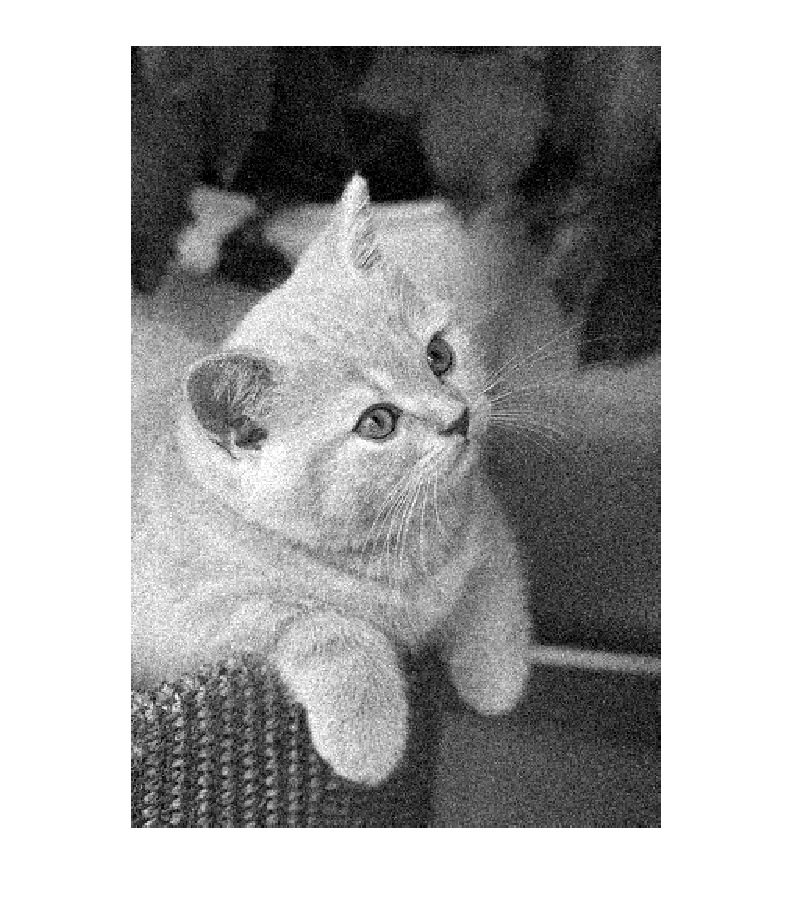}}\\
		\subfloat[PPDG]{
			\includegraphics[width=3.8cm,height=3.2cm]{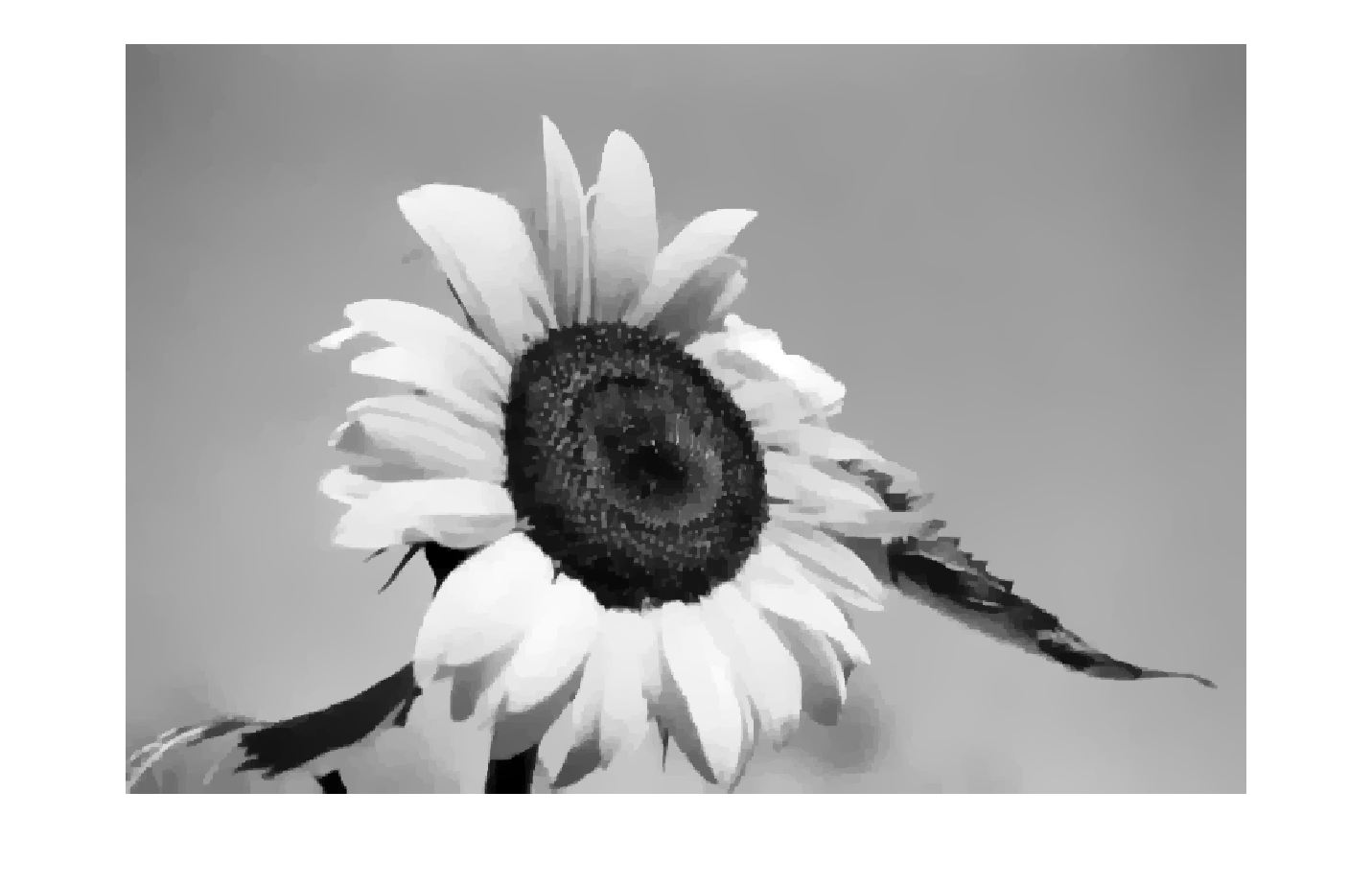}}	
		\subfloat[PPDG]{
			\includegraphics[width=3.8cm,height=3.2cm]{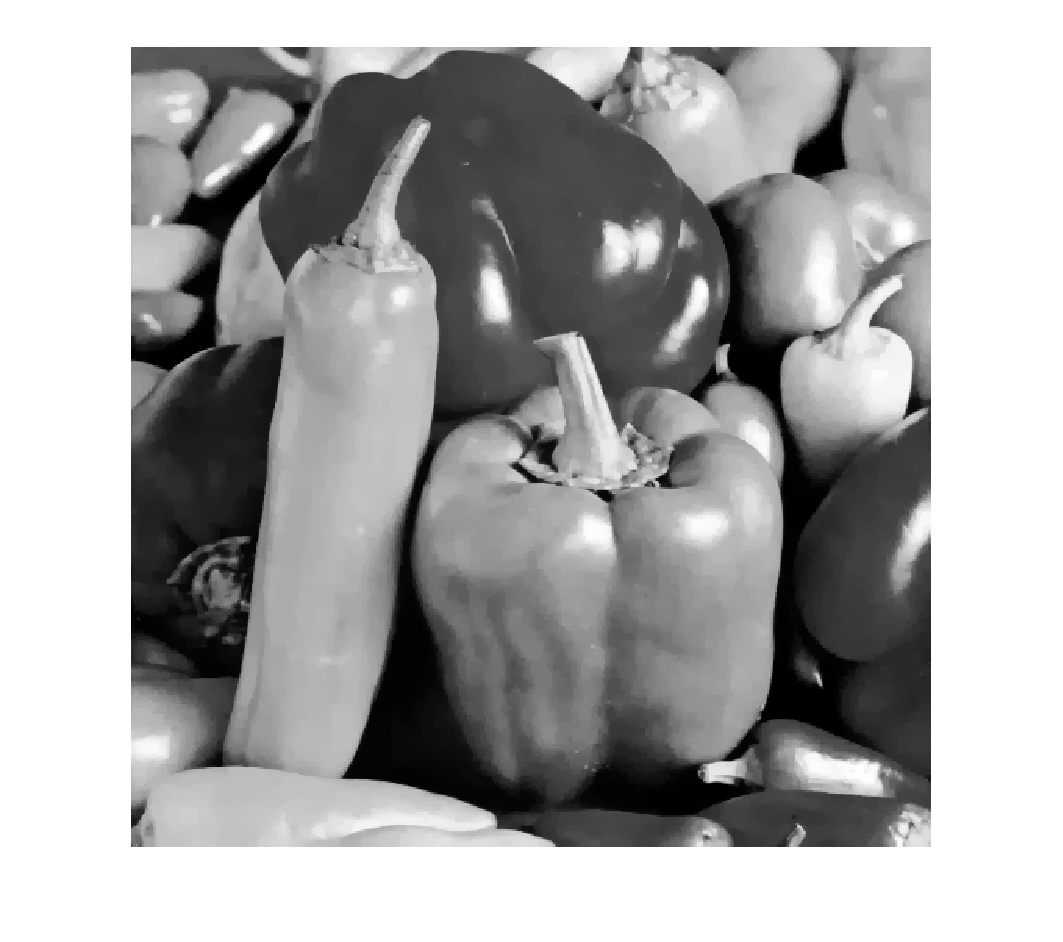}}		
		\subfloat[PPDG]{
			\includegraphics[width=3.8cm,height=3.2cm]{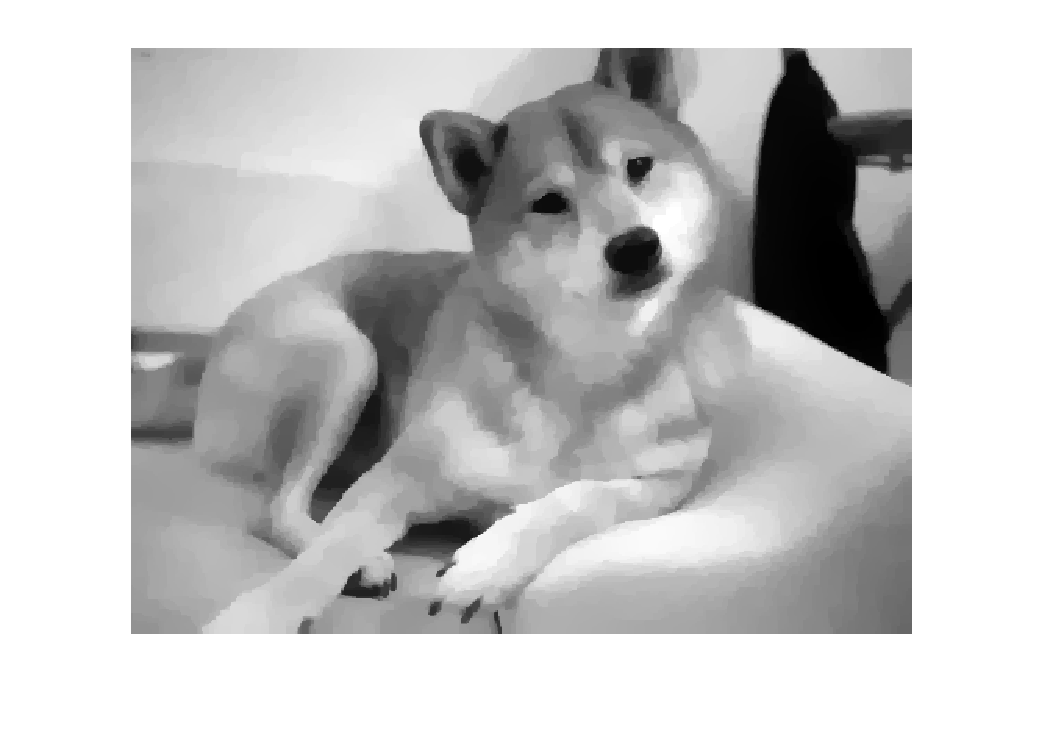}}		
		\subfloat[PPDG]{
			\includegraphics[width=3.8cm,height=3.2cm]{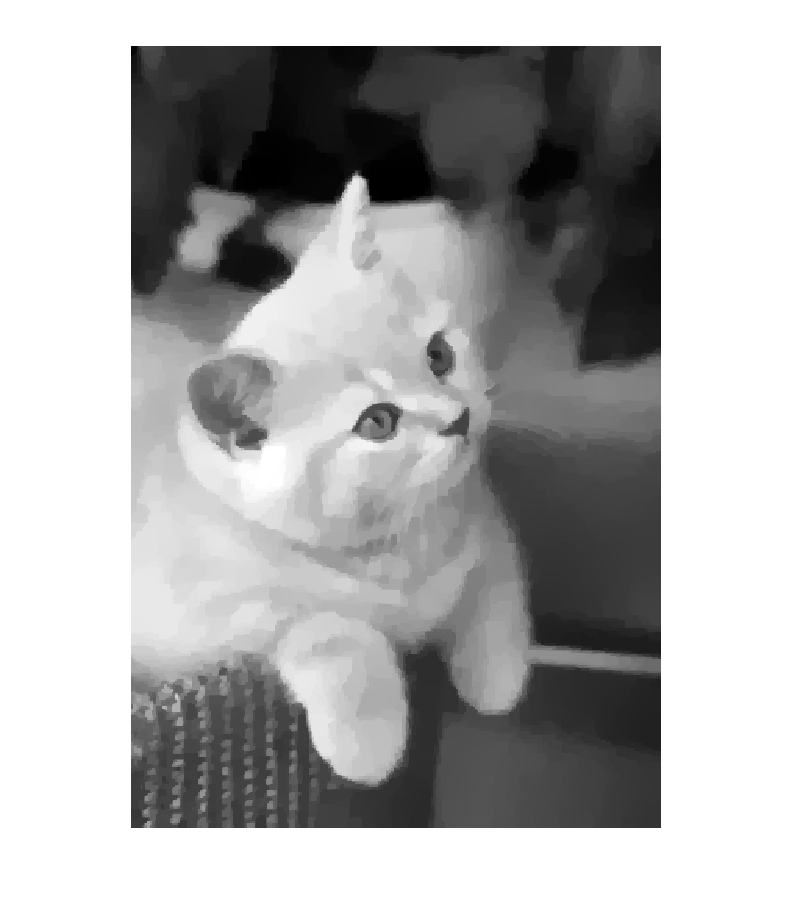}}\\
		\subfloat[ADMM]{
			\includegraphics[width=3.8cm,height=3.2cm]{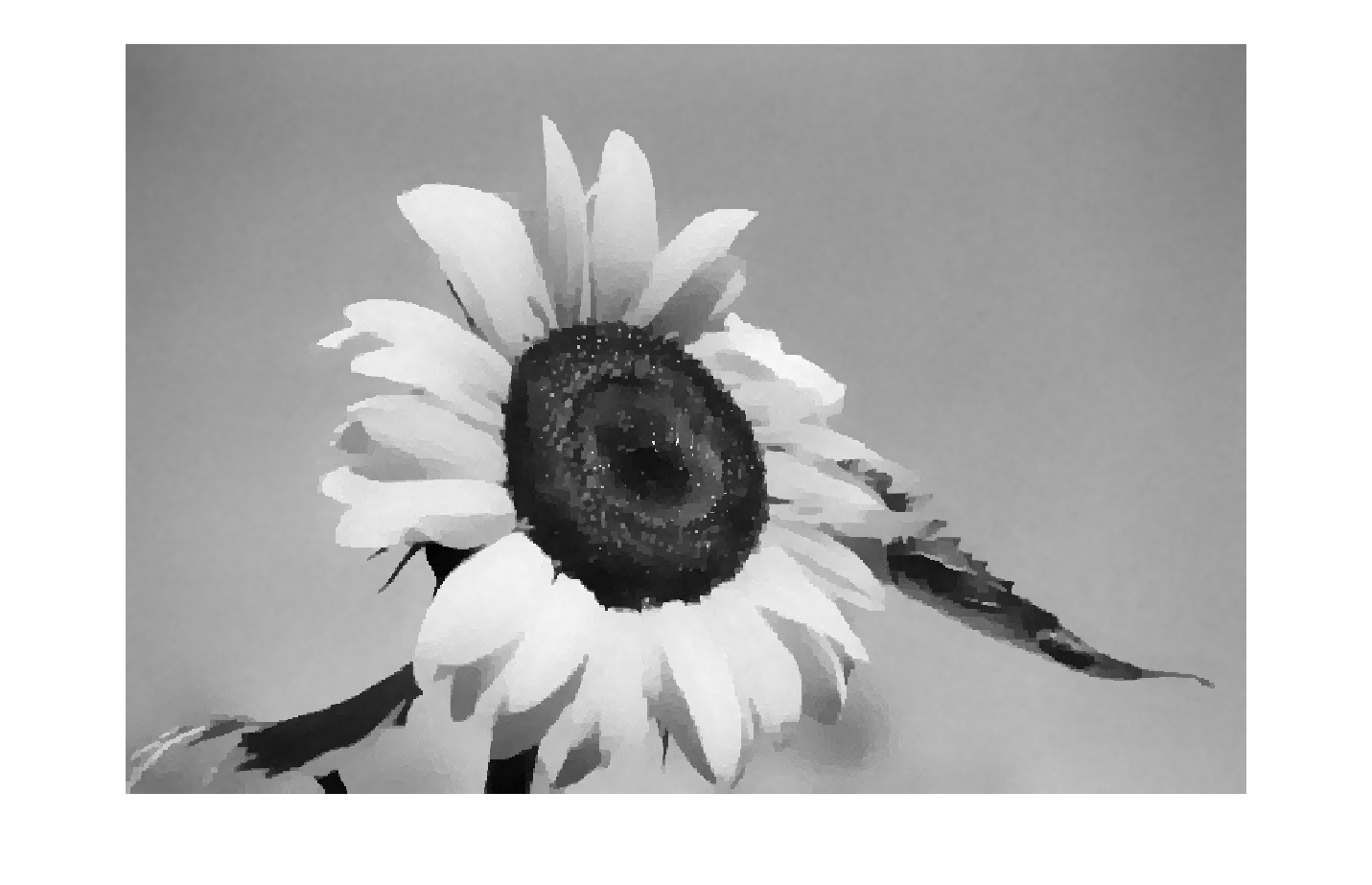}}
		\subfloat[ADMM]{
			\includegraphics[width=3.8cm,height=3.2cm]{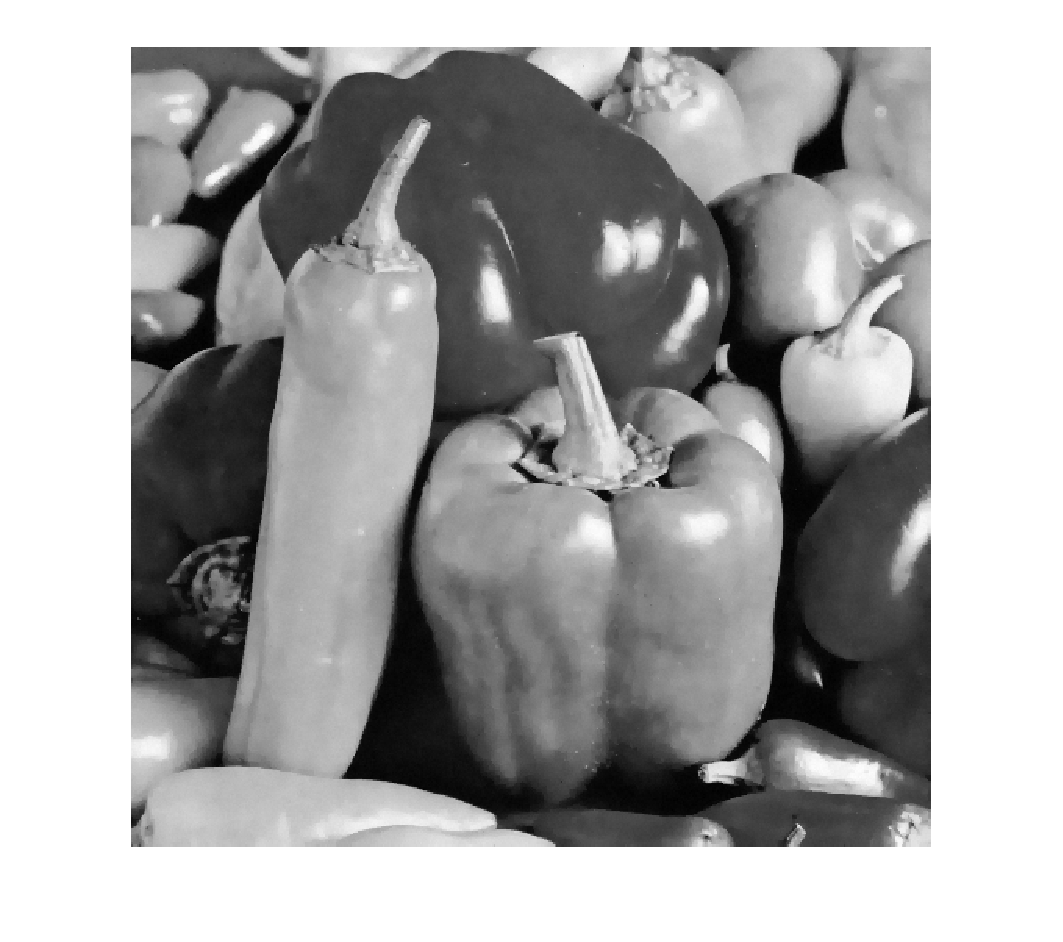}}
		\subfloat[ADMM]{
			\includegraphics[width=3.8cm,height=3.2cm]{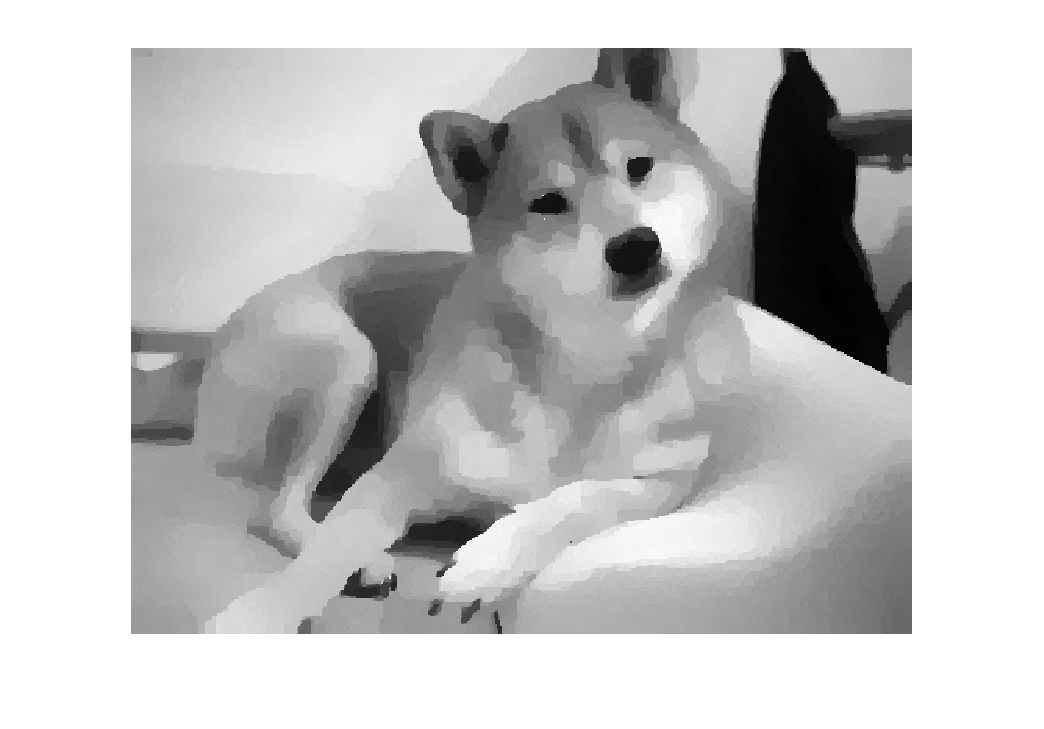}}
		\subfloat[ADMM]{
			\includegraphics[width=3.8cm,height=3.2cm]{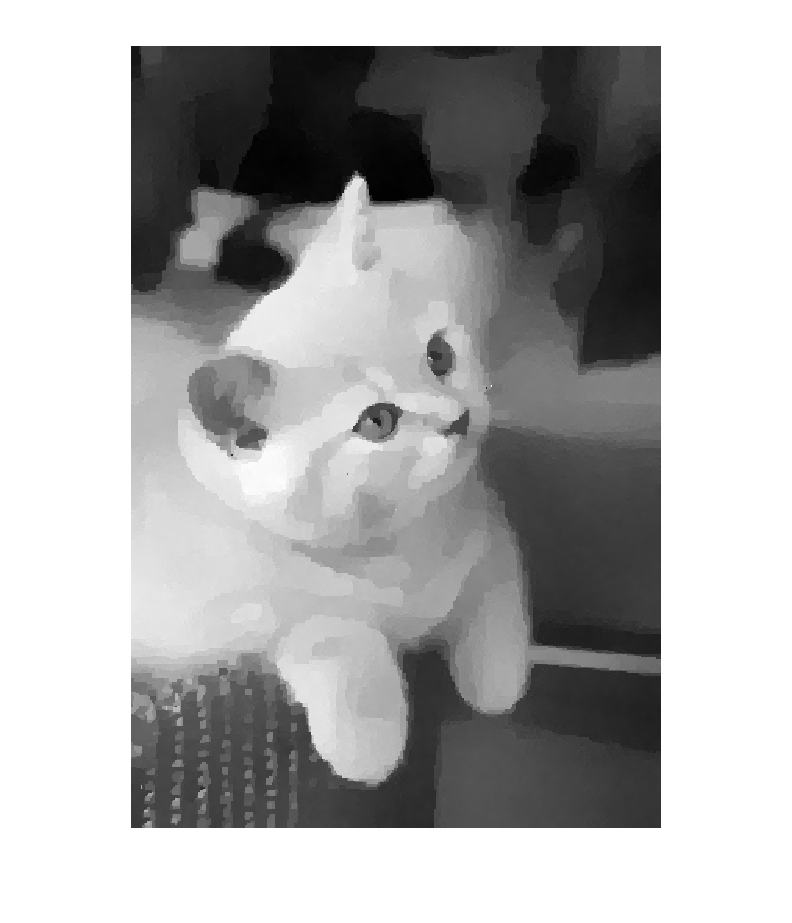}}\\
		\subfloat[PDHG]{
			\includegraphics[width=3.8cm,height=3.2cm]{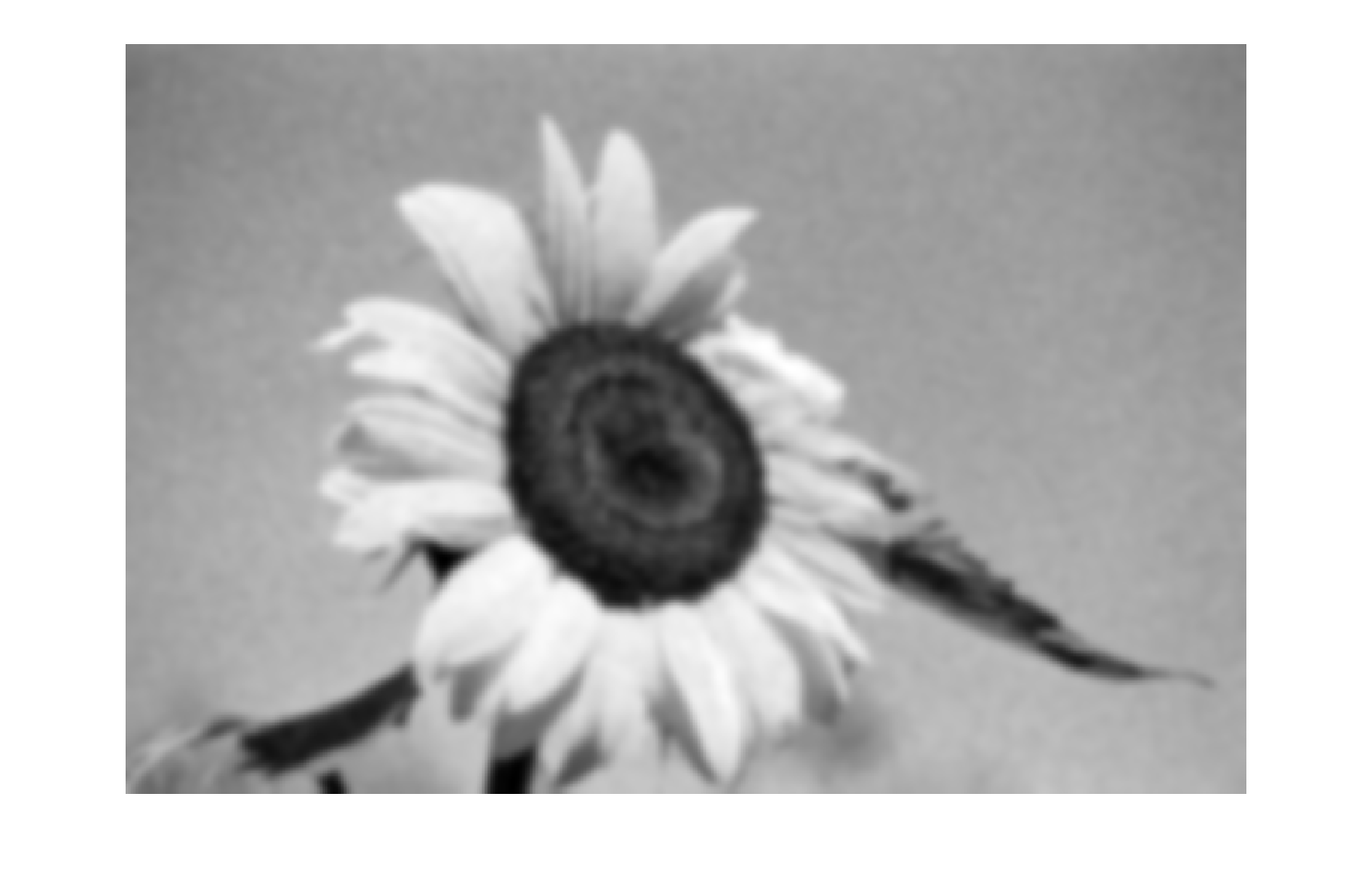}}
		\subfloat[PDHG]{
			\includegraphics[width=3.8cm,height=3.2cm]{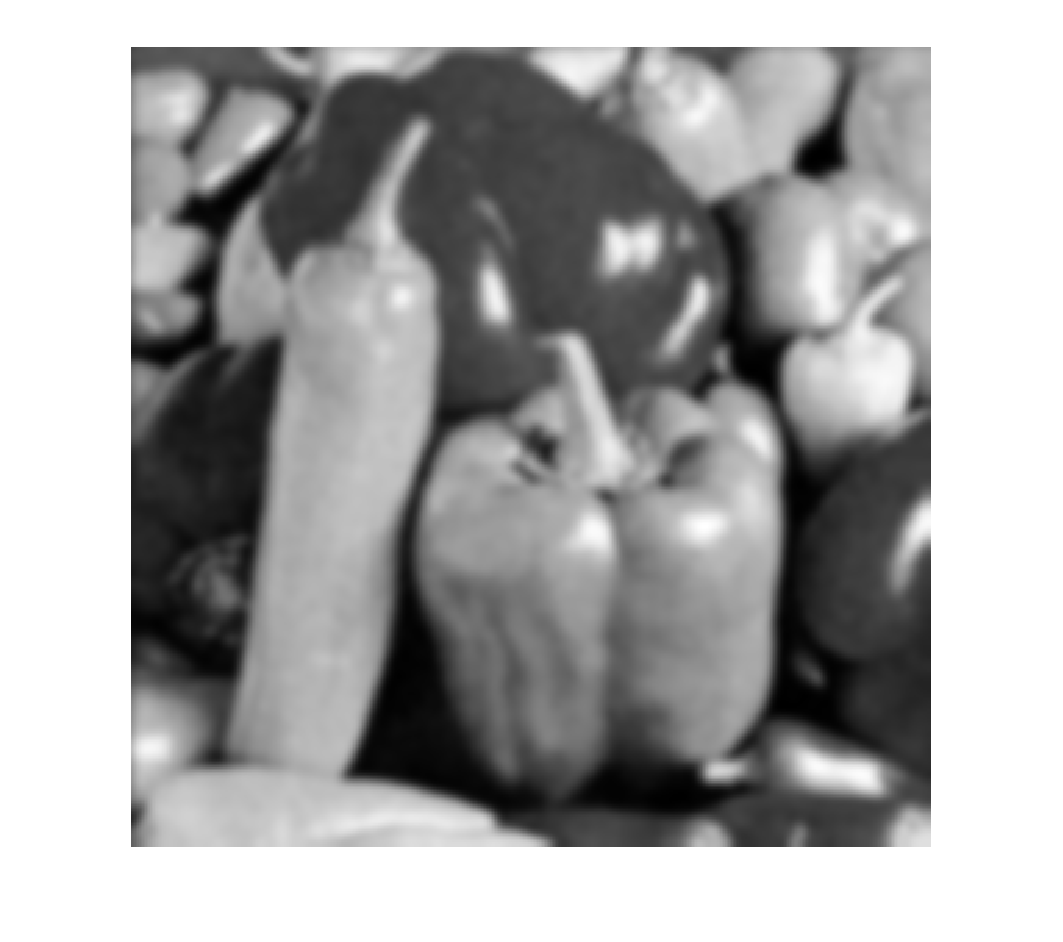}}
		\subfloat[PDHG]{
			\includegraphics[width=3.8cm,height=3.2cm]{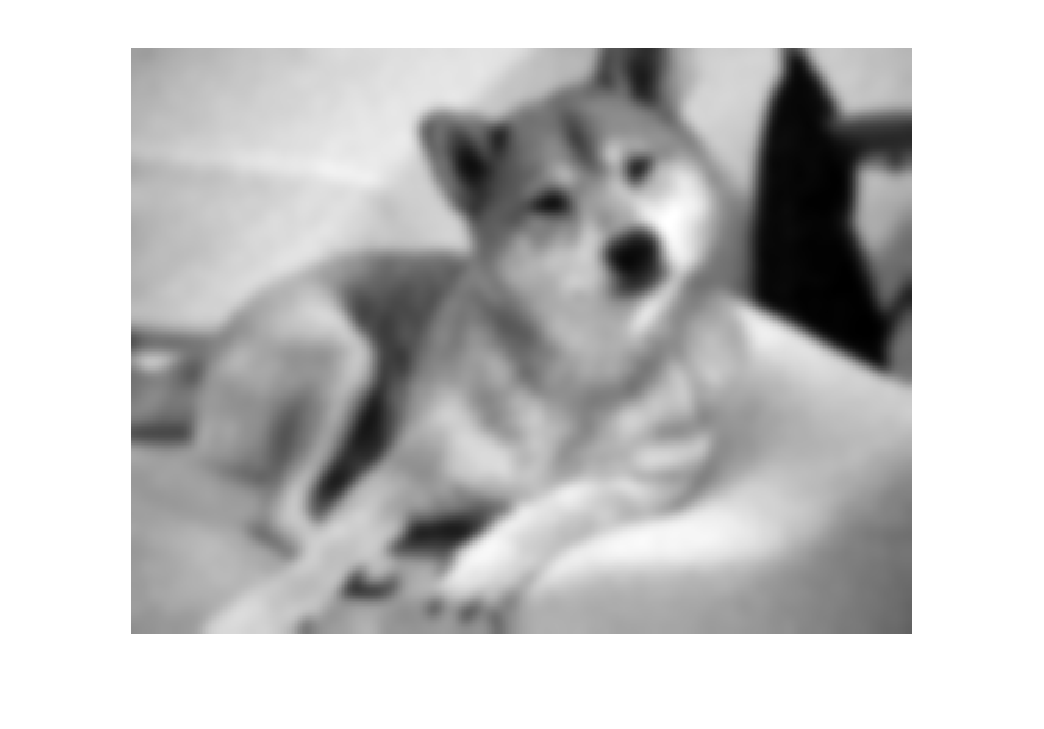}}
		\subfloat[PDHG]{
			\includegraphics[width=3.8cm,height=3.2cm]{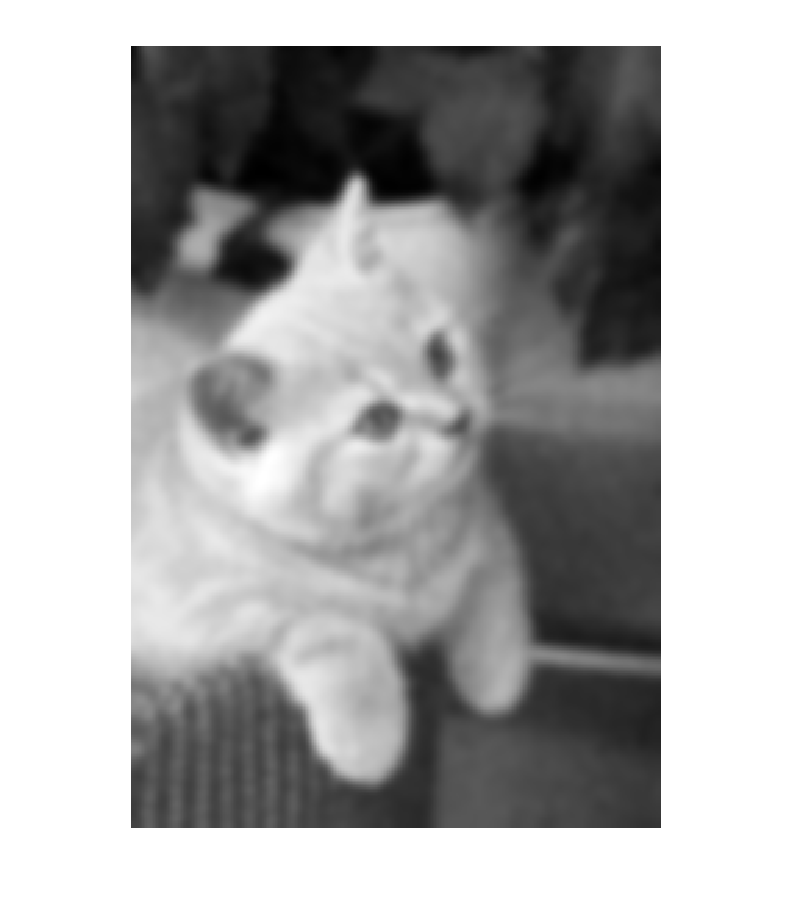}}	
	\end{tabular}
	\caption{The first row contains the original images; the second row represents the Gaussian noised images;
% (e) and (f) are the noisy images corrupted by Gaussian noise with zero mean and variance $\sigma=0.01$, (g) and (h) are a noisy version with $0.005$ Gaussian noise;
 the third, fourth and fifth rows  are the denoised images by PPDG, ADMM and PDHG, respectively.}
	\label{Fig-2}
\end{figure}

\begin{figure}[!htp]
	\centering
	\setlength{\belowcaptionskip}{-6pt}
	\begin{tabular}{cc}
		\subfloat[PPDG]{
			\includegraphics[width=5.8cm,height=5.2cm]{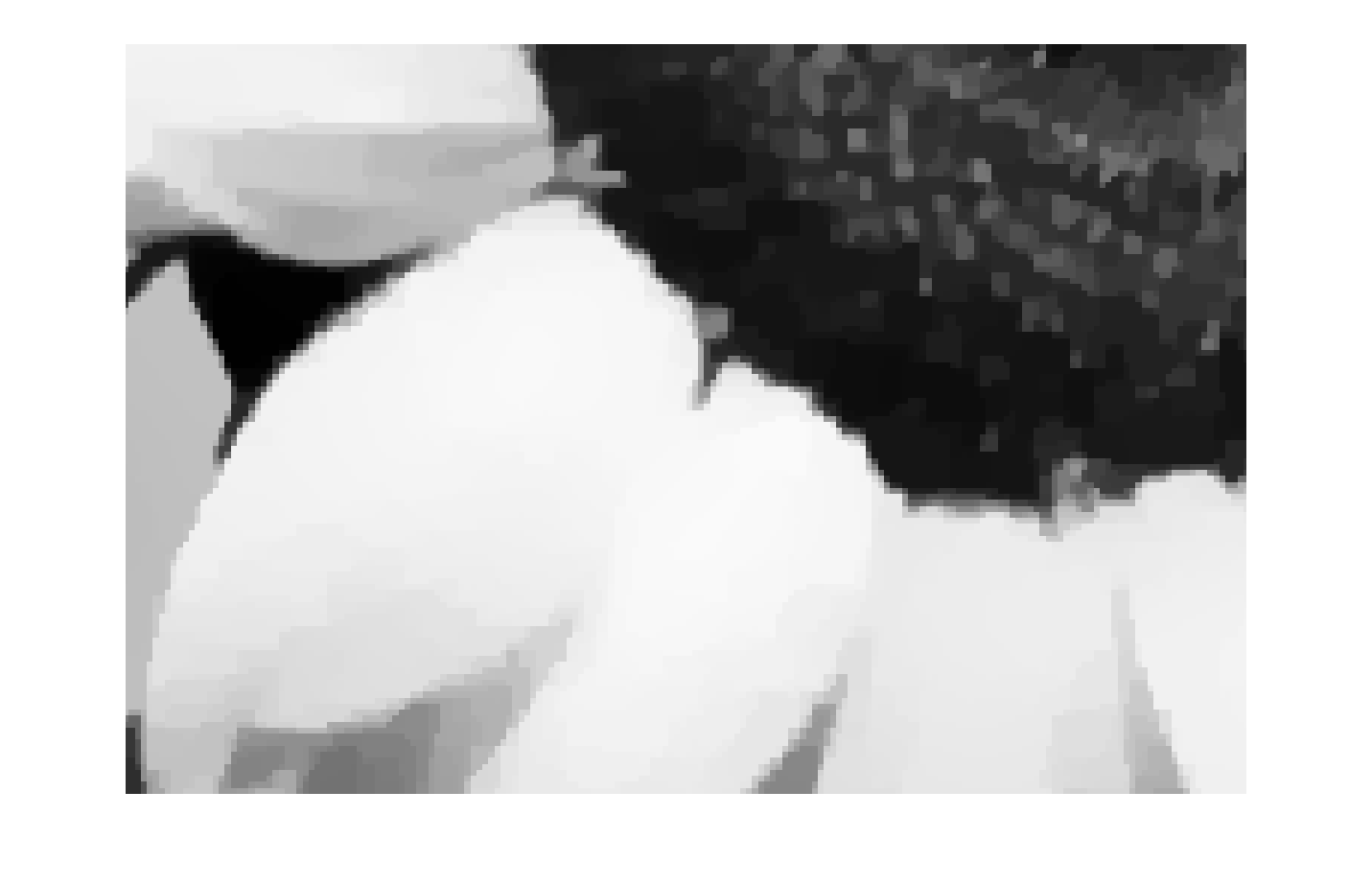}}&\hspace{-10mm}		
		\subfloat[ADMM]{
			\includegraphics[width=5.8cm,height=5.2cm]{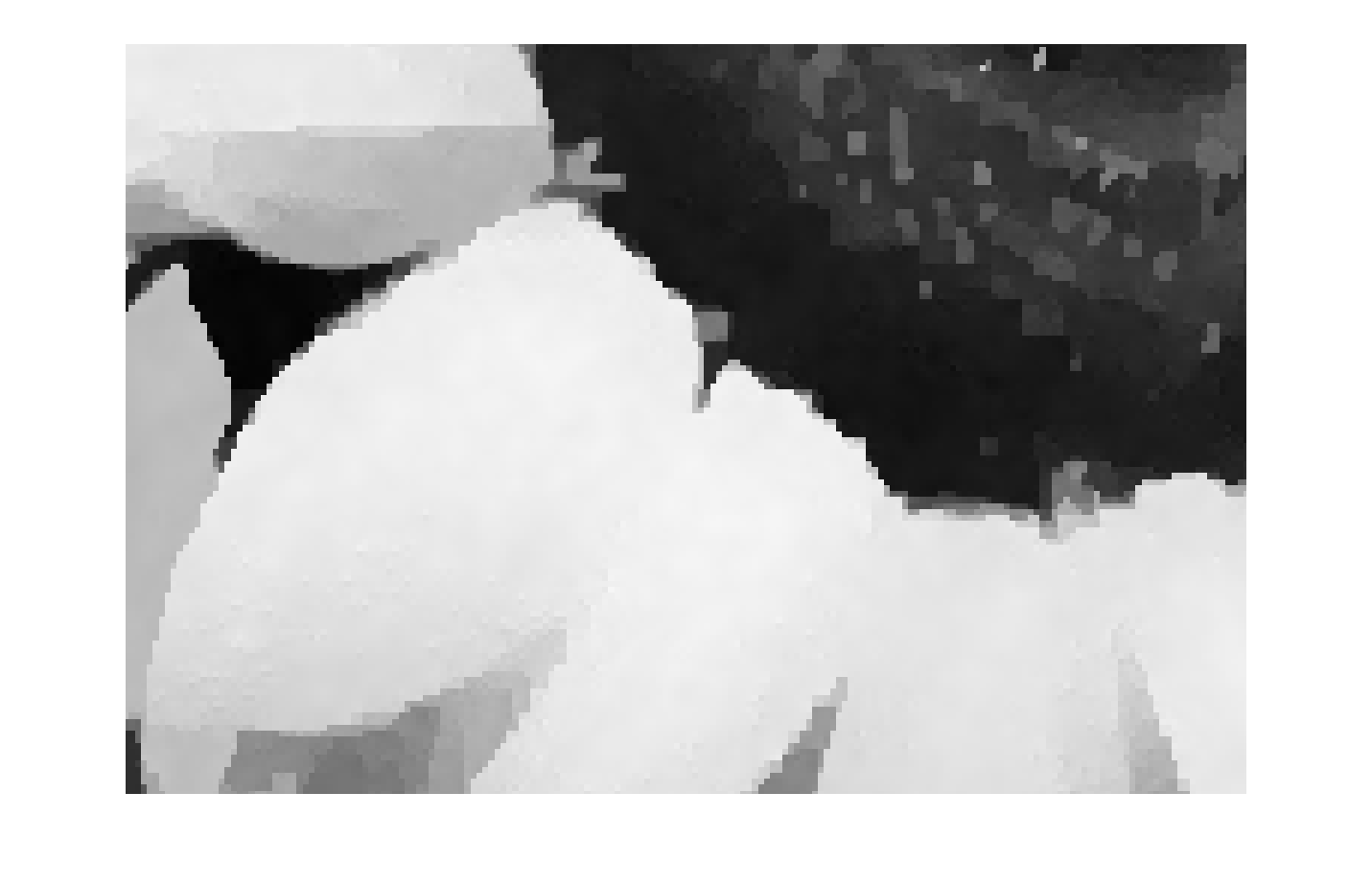}}\\
		\subfloat[PPDG]{
			\includegraphics[width=6.8cm,height=5.2cm]{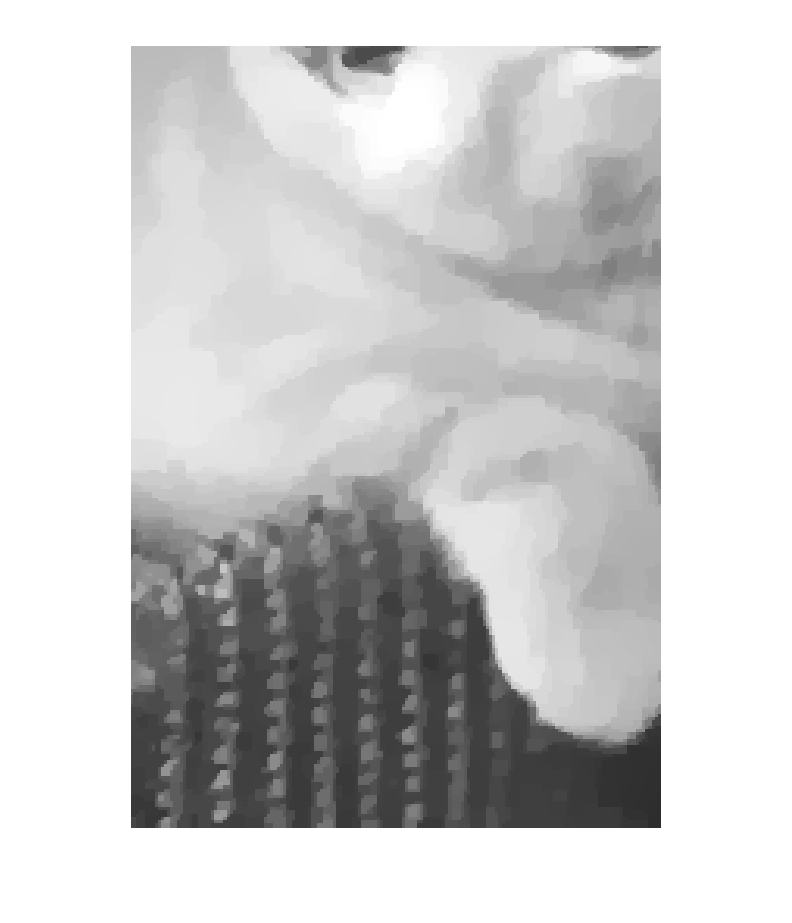}}&\hspace{-10mm}
		\subfloat[ADMM]{
			\includegraphics[width=6.8cm,height=5.2cm]{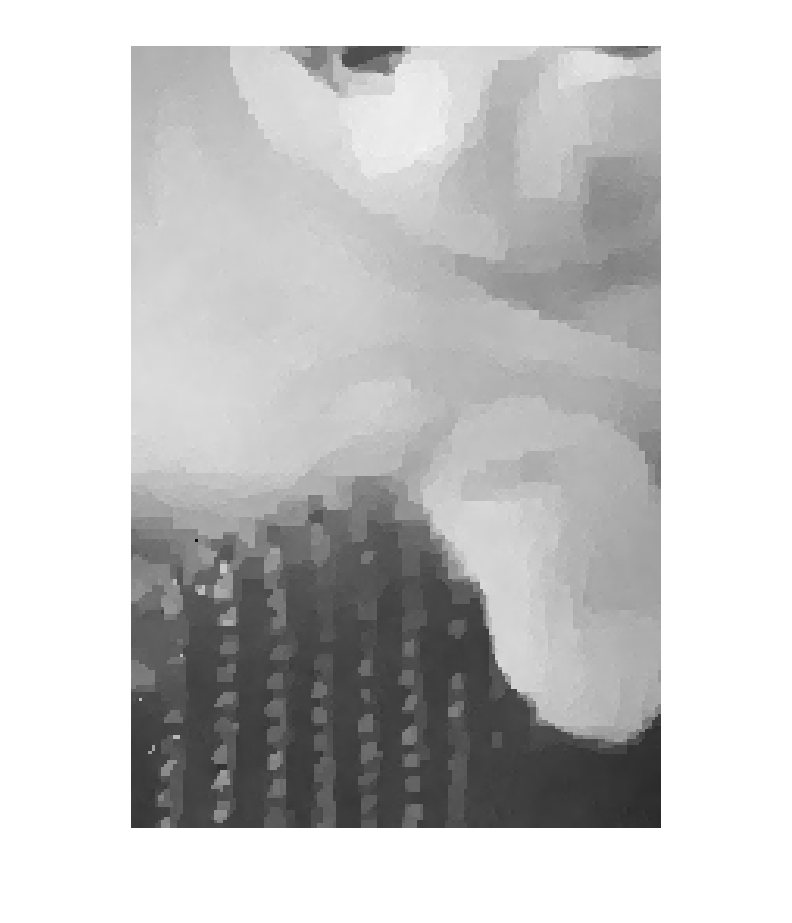}}	
	\end{tabular}
	\caption{Images (a), (b), (c) and (d) are the partial enlargements  of  (i), (m), (l) and (p) in Figure \ref{Fig-2}, respectively.}
	\label{Fig-3}
\end{figure}

 \begin{table}[!htp]
	\centering
	\caption{PSNR and running time (seconds) of four images (in Figure \ref{Fig-2})  denoised by PPDG, ADMM and PDHG. Bold numbers are the best results.}
	\begin{tabular}{|c||c|c|c|c|}
			\hline
		Algorithm &   Sunflower    & Peppers       &  Dog  &    Cat    \\[2pt]
		\hline
		PPDG & $\mathbf{27.5878}|\mathbf{3.8}$ & $26.7780|\mathbf{2.3}$ & $\mathbf{26.2217}|\mathbf{1.6}$ & $\mathbf{28.0326}|\mathbf{1.4}$ \\[2pt]
		ADMM & $27.2407|5.3$ & $\mathbf{27.0784}|3.3$ & $25.4212|2.6$ & $27.1934|2.4$ \\[2pt]
		PDHG & $24.0718|4.6$ & $20.0308|3.3$ & $23.4053|2.3$ & $25.8938|2.2$ \\
		\hline
	\end{tabular}
	\label{table:psnr}
\end{table}

\subsection{Deep learning for image classification}
In this subsection, we employ a one-hidden layer deep neural network for image classification using the dataset \texttt{CIFAR-$10$}\footnote{The dataset can be found in \url{https://www.cs.toronto.edu/~kriz/cifar.html}}, which consists of  $60000$ color images   of size $32\times 32$ divided into $10$ classes. Within this dataset,  $50000$ images have already been designated for training, while the remaining $10000$ are reserved for testing.
The one-hidden layer neural network consists of an input layer, a hidden layer and an output layer.  The hidden layer size  is $175$.

We adopt the following notation:
\begin{itemize}
\item $N$: the number of input samples, $m$: the number of neurons in the hidden layer,  $d$: the dimension of each input sample;
\item  $x_{i}\in\R^d$: the $i$-th input sample, $i=1,\cdots,N$, $z_{ij}\in\R$: the $j$-th element of  the actual output, $y_{ij}\in\R$: the $j$-th element of the desired output, $j=1,\cdots,10$;  	
\item $w_{kl}$: the weights of the connections between the input nodes and the hidden layer, $v_{jk}$: the weights of the connections between the hidden layer neurons and the
output neuron, $b_k,b_j$: the bias parameters of hidden layer and output layer, $k=1,\cdots,m$, $j=1,\cdots,10$;
\end{itemize}

Training the neural network amounts to obtaining the value of the model parameter $(w,b)$ such that, for each input data $x$, the output $z$ of the model predicts the real value $y$ with satisfying accuracy.
To achieve this, it is required to solve the following finite-sum optimization problem,
\be\label{eq:deep}
\min_{w,b} \frac{1}{N}\sum_{i=1}^{N}f_i(w,b)+\lambda(\|w\|_1+\|b\|_1),
\ee
where $$f_i(w,b)=-\sum_{j=1}^{10}y_{ij}\log(z_{ij})$$ is the cross-entropy loss function,
$$z_{ij}=g_2\left(\sum_{k=1}^mv_{jk}g_1\left(\sum_{l=1}^dw_{kl}x_{il}+b_k\right)+b_j\right),$$
 and $\lambda>0$ represents a regularization parameter. Here,
$g_1$ is the sigmoid activation function and
$g_2$ is the softmax activation function.

Choose a normalized vector drawn from standard normal distribution  as the initial point $x^0$, and set $\lambda=1e-4$.  Combining with SAGA, SVRG and SARAH estimators,  we apply SPPDG (Algorithm \ref{alg:SPPDG}), the stochastic linearized ADMM (SADMM) proposed recently in \cite{BLZ2021} and the stochastic proximal gradient method (SPG), to train the neural network on the training set. After training, we evaluate the classification performance of this  neural network  on test set. The numerical results are presented in
Figure  \ref{Fig-6},  which displays  the training loss, training error and test error as functions of the    total number of propagations for all methods. By observing this figure, all three SPPDG methods obviously outperform the methods associated with  SADMM and SPG, and the gradient estimator SVRG seems more competitive than SAGA and SARAH.

\begin{figure}[htp]
	\centering
	\setlength{\belowcaptionskip}{-6pt}
	\begin{tabular}{ccc}
		\subfloat[Training loss vs Props.]{
			\includegraphics[width=5cm, height=4cm]{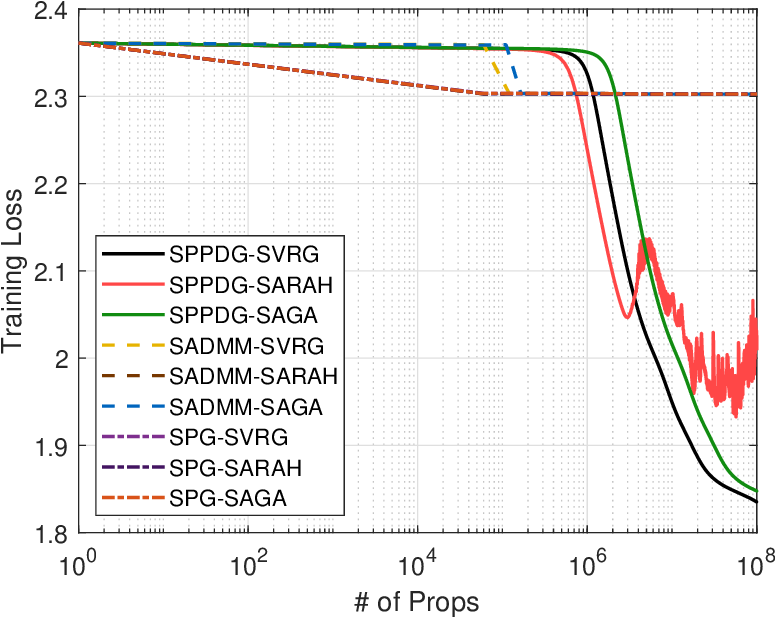}} &\hspace{-4mm}
		\subfloat[Training error vs Props.]{
			\includegraphics[width=5cm, height=4cm]{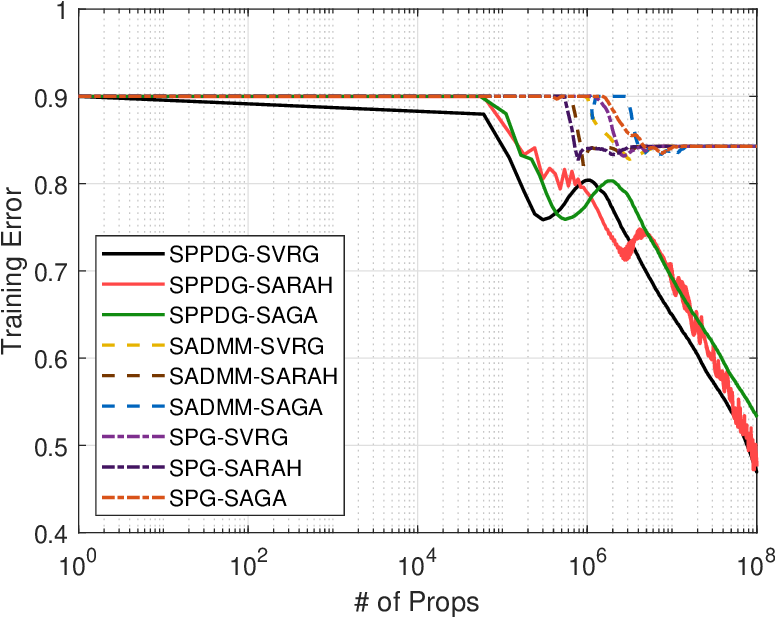}} &\hspace{-4mm}
		\subfloat[Test error vs Props.]{
			\includegraphics[width=5cm,height=4cm]{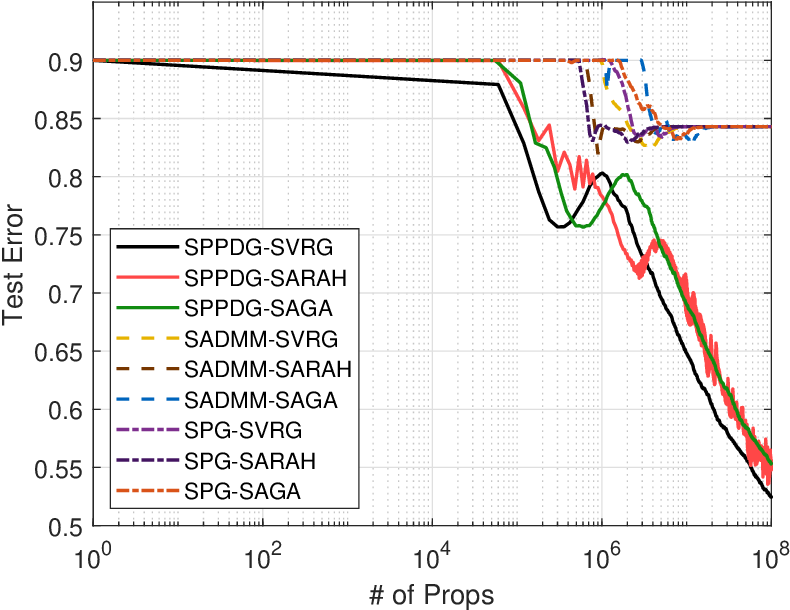}}
	\end{tabular}
	\caption{Comparison of SPPDG, SADMM and SGD for image classification using one-hidden layer neural network.}
	\label{Fig-6}
\end{figure}

\subsection{Nonconvex graph-guided fused  lasso}
In this subsection, we consider the following problem
\be\label{eq:sigmoid-re}
\min_{x\in\hat{\cD}} {\frac{1}{N}\sum_{i=1}^{N}f_i(x)+\lambda\|Ax\|_p^p}.
\ee
Here, $A=[V;I]\in \R^{m\times n}$ with  $V\in\R^{n\times n}$
is the sparsity pattern of the graph obtained by
sparse inverse covariance estimation (see \cite{Frie2008}), the set $\hat{\cD}$ is defined as $\hat{\cD}=\{x\in\R^{n}: \|Ax\|_{\infty}\leq r\}$,  $f_i(x)=1-\tanh(b_i\cdot\langle a_i,x\rangle)$ is the sigmoid loss function which is nonconvex, and  $\|u\|_p$, $p\in(0,1)$ is the $\ell_p$-norm.  Evidently,  problem \eqref{eq:sigmoid-re} can be categorized as an instance  of the fully nonconvex finite-sum optimization (\ref{eq:p-finite-sum}) with $h(u)=\lambda\|u\|_p^p+\cI_{\cD}(u)$, $\cD=\{u:\|u\|_{\infty}\leq r\}$.
In what follows, we choose $\lambda=1e-4$ and $r=1$.

\begin{table}[!htp]
	\centering
	\caption{Datasets used in graph-guided fused lasso.}
	\begin{tabular}{|c|c|c|c|c|}
		\hline
		Dataset & Data  $N$ & Variable $n$ & {Density}  \\
		\hline
		$\mathtt{CINA}$ & 16033 & 132 & 29.56\% \\[2pt]
		$\mathtt{MNIST}$ & 60000 & 784 & 19.12\% \\[2pt]
		$\mathtt{gisette}$ & 6000 & 5000 & 12.97\% \\
		$\mathtt{covtype}$ & 581012 & 54 & 22.12\% \\
		\hline
	\end{tabular}
	\label{table:datasets}
\end{table}

In this experiment, we test problem \eqref{eq:sigmoid-re} by datasets \texttt{CINA}\footnote{The dataset is available in \url{http://www.causality.inf.ethz.ch/data/CINA.html}}, \texttt{MNIST}\footnote{The dataset is available in \url{http://yann.lecun.com/exdb/mnist}}, \texttt{gisette} \cite{GG2004} and \texttt{covtype}\footnote{The dataset is available in \url{https://datahub.io/machine-learning/covertype}},  which are summarized  in Table \ref{table:datasets}. At the $k$-th iteration of SPPDG, we also apply \eqref{eq:proximal-l0a} to compute an approximate point of the extended proximal mapping  $\prox_{h^*}^{M}(y^k+M^{-1}A(2x^{k+1}-x^k))$ where $\prox_{\beta h^*}(\cdot)$ is calculated according to Example \ref{ex:lp}.

The numerical result is  displayed in Figure \ref{Fig-4} with initial point $x^0=0$, $q=0.5$ and  a fixed mini-batch sample size $\lfloor0.01N\rfloor$.
Due to the fully nonconvex structue and the existence of the linear operator $A$, problem \eqref{eq:proximal-l0a} cannot be solved by SADMM and SPG directly as in the previous subsection. However, we can observe from Figure \ref{Fig-4} that SPPDG (Algorithm \ref{alg:SPPDG} with gradient estimators SAGA, SVRG and SARAH)
is able to solve problem \eqref{eq:sigmoid-re} efficiently.  It is also shown that by running same number of iterations, SPPDG-SVRG outperforms both  SPPDG-SAGA and SPPDG-SARAH significantly. However,  with the same CPU time the performance of SPPDG-SAGA is  better than  that of SPPDG-SVRG and SPPDG-SARAH.
\begin{figure}[!htp]
	\centering
	%\setlength{\belowcaptionskip}{-6pt}
%	\begin{tabular}{cc}
		\subfloat[Obj. vs Iter. (CINA)]{
			\resizebox*{5.8cm}{!}{\includegraphics{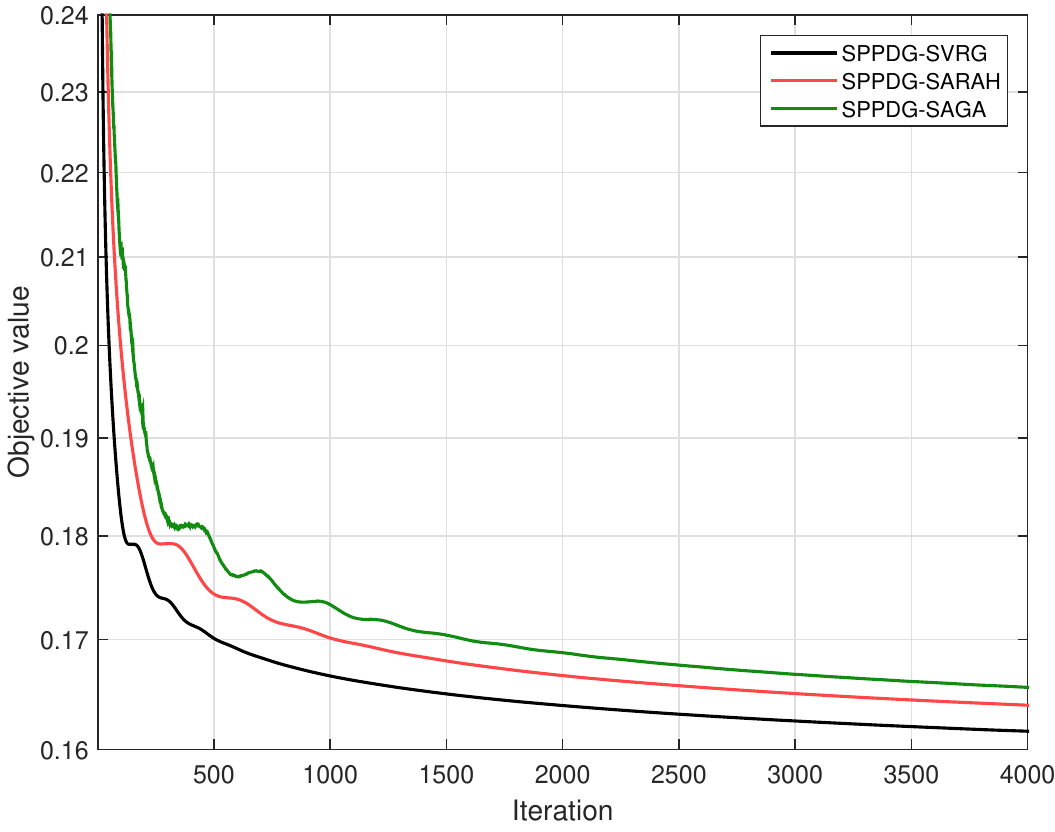}}} \hspace{5pt}
		\subfloat[Obj. vs Time (CINA)]{
			\resizebox*{5.8cm}{!}{\includegraphics{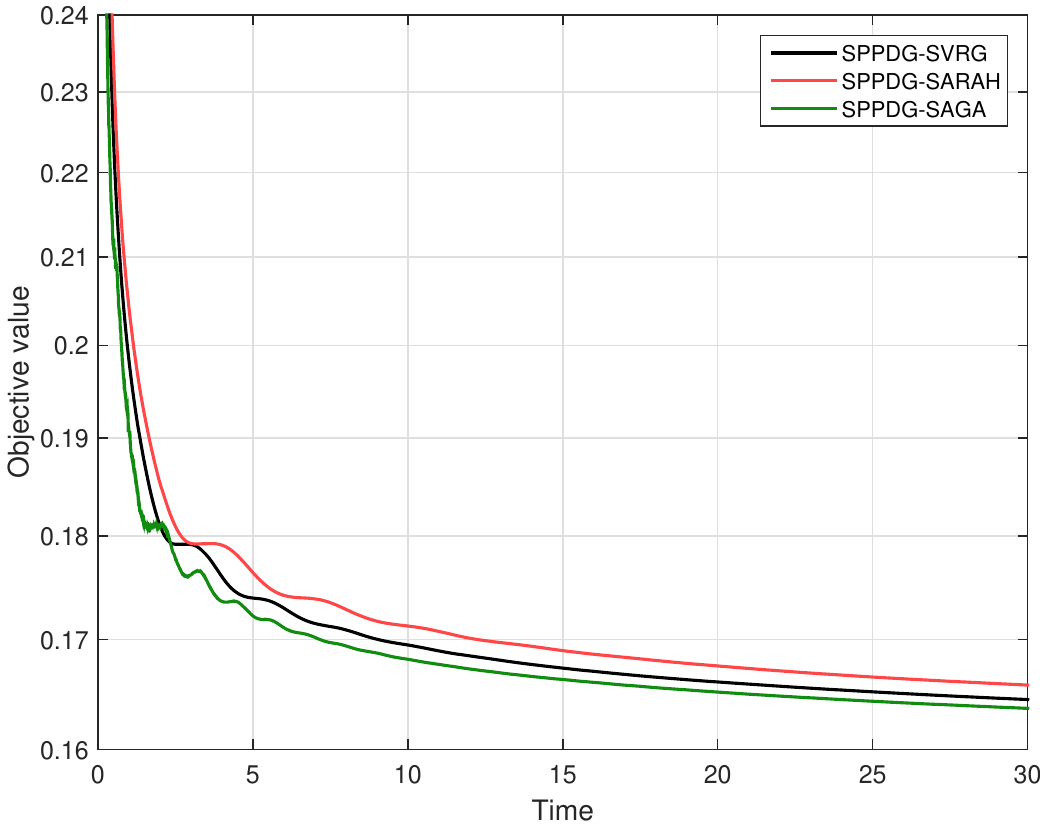}}}\hspace{5pt}\\
%		\subfloat[Obj. vs Time (CINA)]{
%			\includegraphics[width=5.8cm, height=4.2cm]{results_nonconv_sigmoid_BASE_CINA_time_value_id_12}}\\	
\subfloat[Obj. vs Iter. (MNIST)]{
	\resizebox*{5.8cm}{!}{\includegraphics{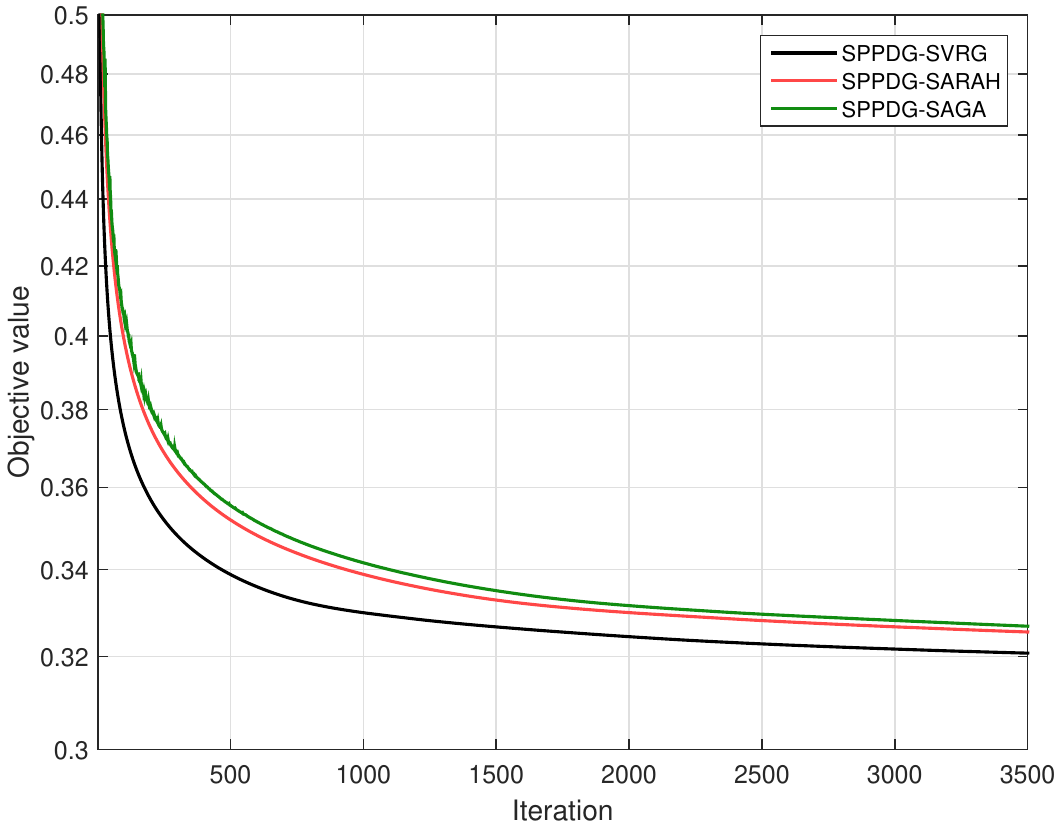}}}\hspace{5pt}	
%		\subfloat[Obj. vs Iter. (MNIST)]{
%			\includegraphics[width=5.8cm,height=4.2cm]{results_nonconv_sigmoid_BASE_MNIST_iter_value_id_12}}  &\hspace{-2mm}
\subfloat[Obj. vs Time (MNIST)]{
	\resizebox*{5.8cm}{!}{\includegraphics{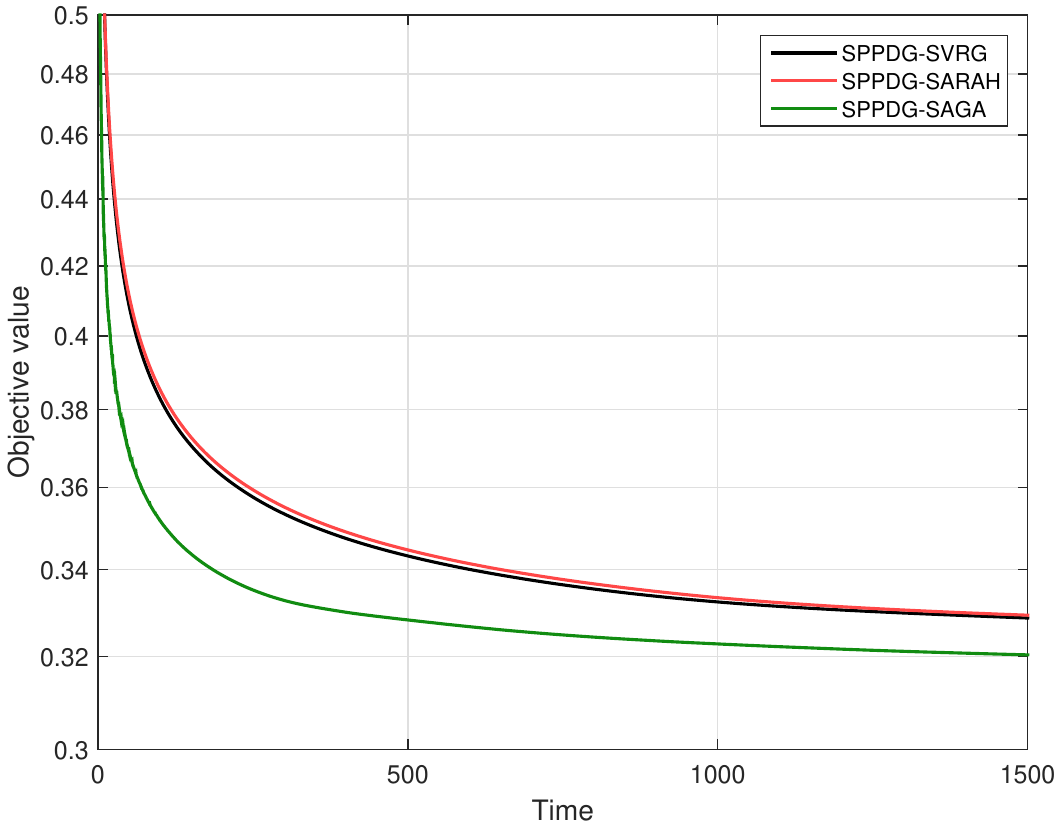}}}\hspace{5pt}\\
%		\subfloat[Obj. vs Time (MNIST)]{
%			\includegraphics[width=5.8cm,height=4.2cm]{results_nonconv_sigmoid_BASE_MNIST_time_value_id_12}}\\
\subfloat[Obj. vs Iter. (gisette)]{
	\resizebox*{5.8cm}{!}{\includegraphics{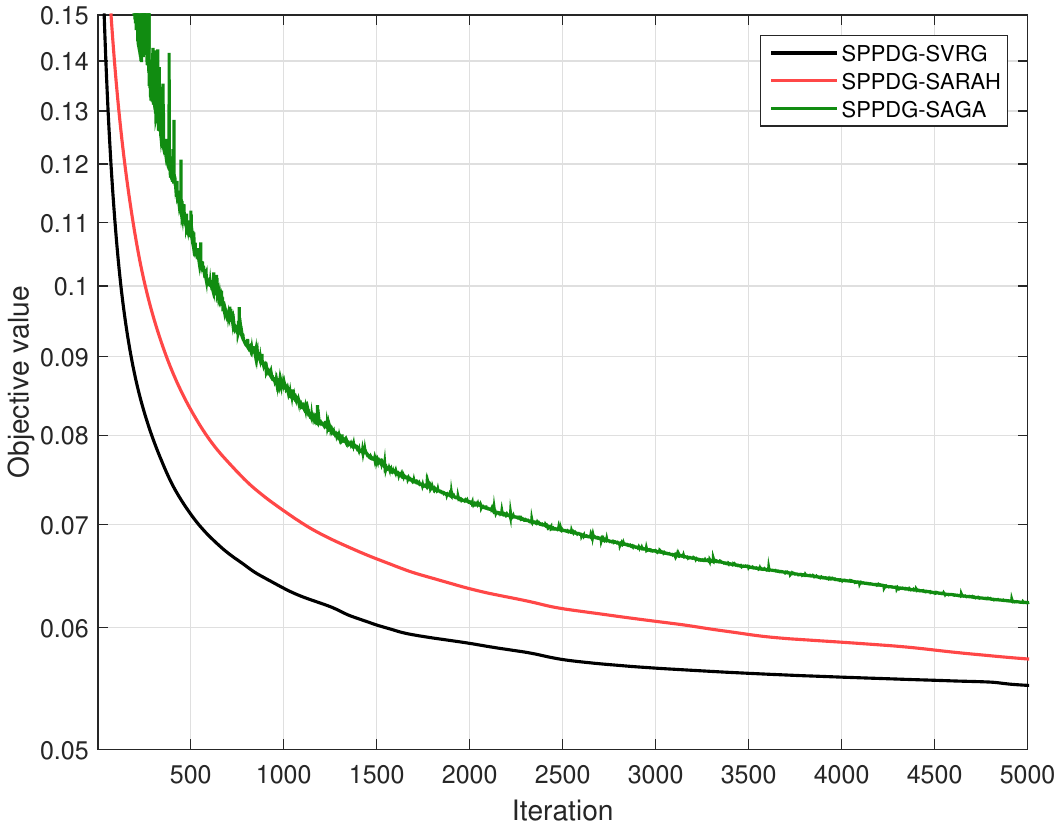}}}\hspace{5pt}		
%		\subfloat[Obj. vs Iter. (gisette)]{
%			\includegraphics[width=5.8cm, height=4.2cm]{results_nonconv_sigmoid_BASE_gisette_iter_value_id_12}} &\hspace{-2mm}
\subfloat[Obj. vs Time (gisette)]{
	\resizebox*{5.8cm}{!}{\includegraphics{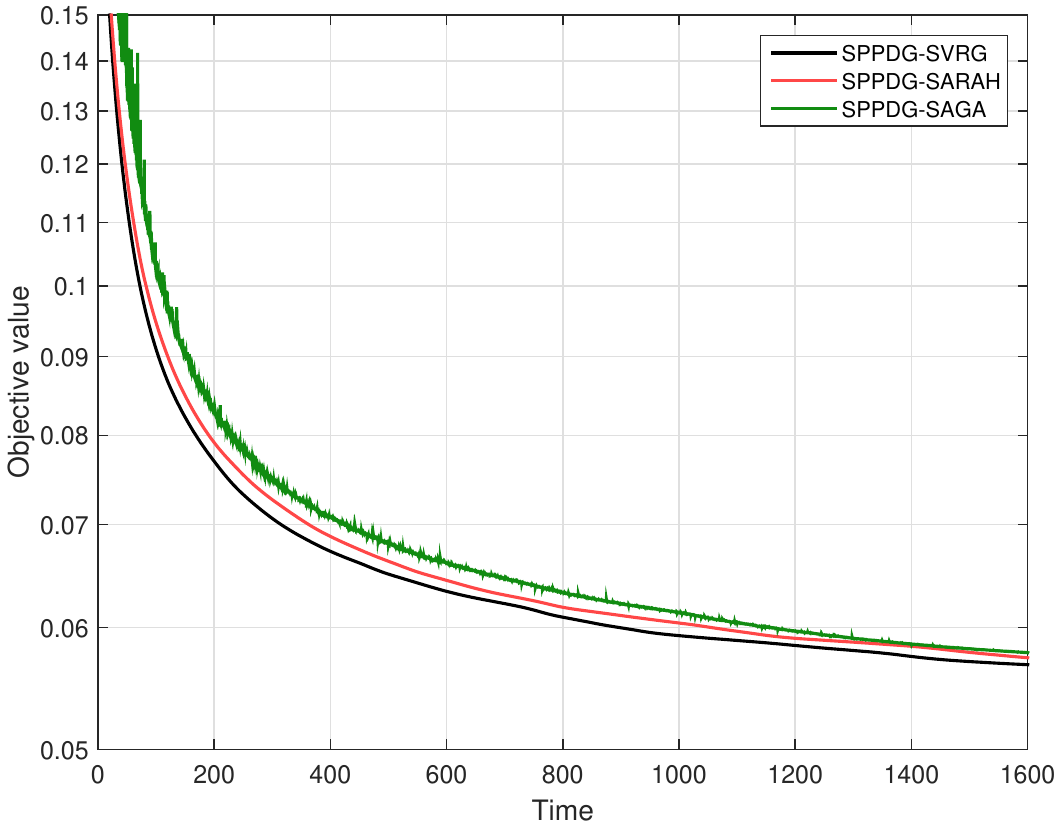}}}\hspace{5pt}\\
%		\subfloat[Obj. vs Time (gisette)]{
%			\includegraphics[width=5.8cm, height=4.2cm]{results_nonconv_sigmoid_BASE_gisette_time_value_id_12}}\\
\subfloat[Obj. vs Iter. (covtype)]{
	\resizebox*{5.8cm}{!}{\includegraphics{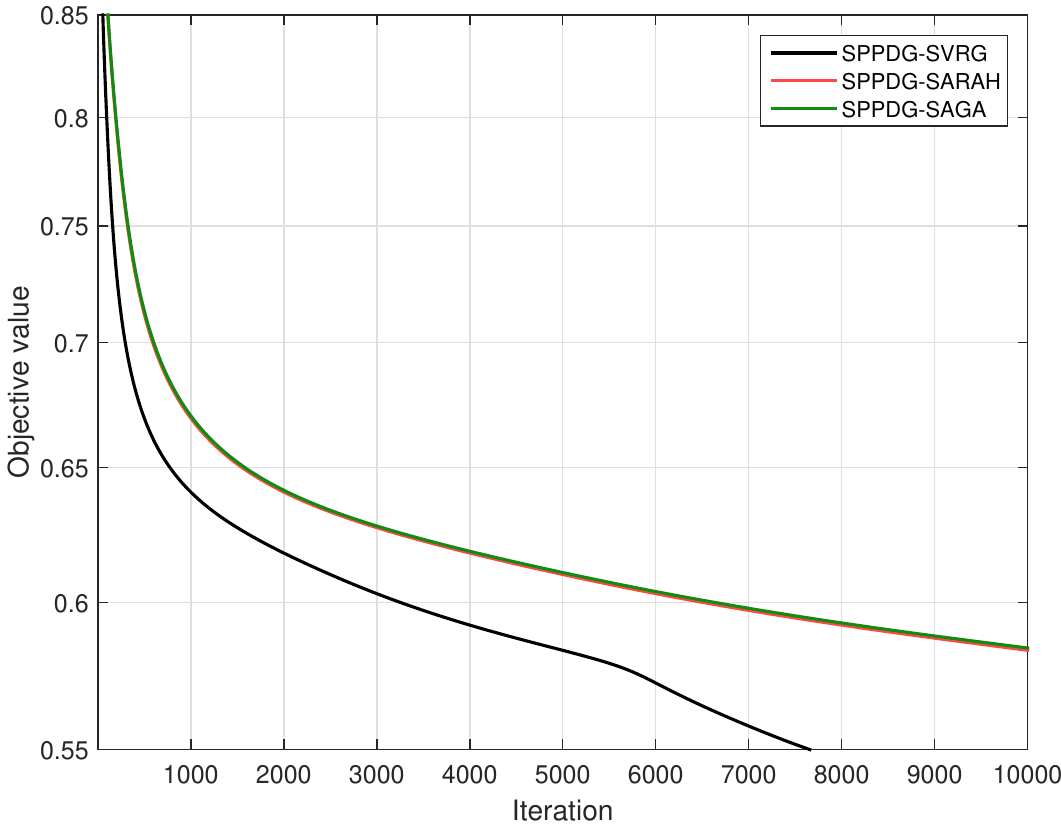}}}\hspace{5pt}		
%		\subfloat[Obj. vs Iter. (covtype)]{
%			\includegraphics[width=5.8cm,height=4.2cm]{results_nonconv_sigmoid_BASE_covtype_iter_value_id_12}} &\hspace{-2mm}
\subfloat[Obj. vs Time (covtype)]{
	\resizebox*{5.8cm}{!}{\includegraphics{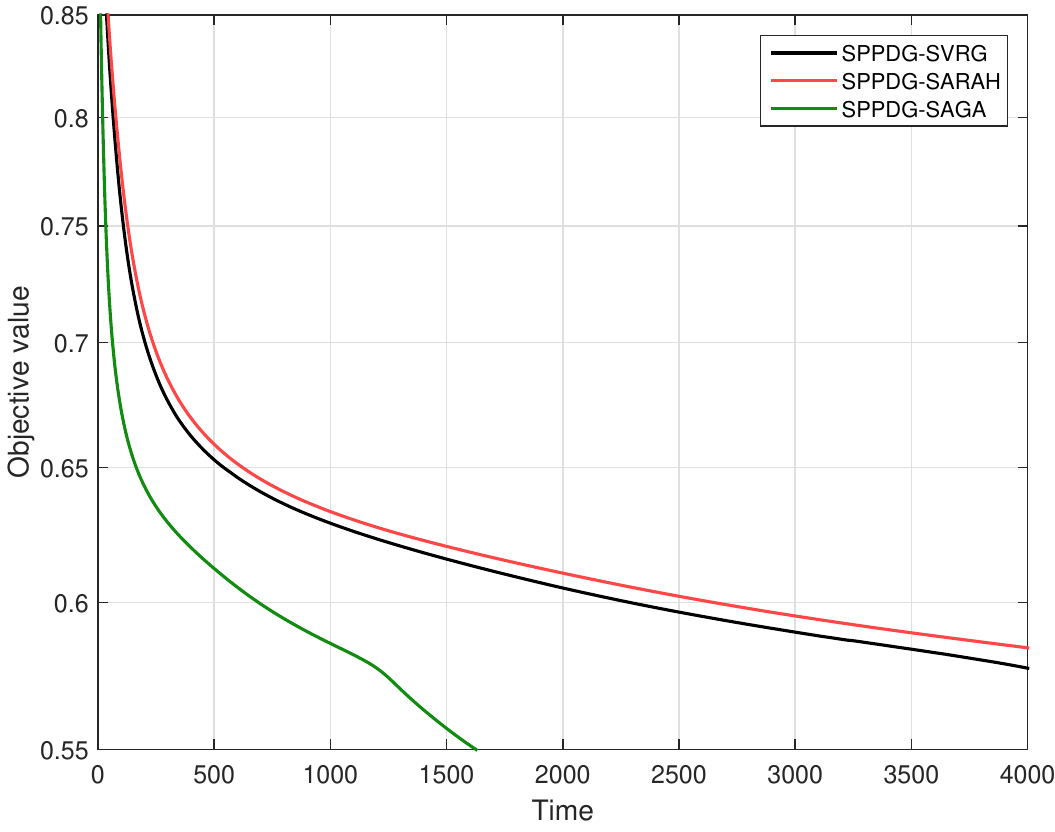}}}
%		\subfloat[Obj. vs Time (covtype)]{
%			\includegraphics[width=5.8cm,height=4.2cm]{results_nonconv_sigmoid_BASE_covtype_time_value_id_12}}				
%	\end{tabular}
	\caption{SPPDG for nonconvex graph-guided fused lasso.}
	\label{Fig-4}
\end{figure}

\section{Conclusion}
%With the appearance and development of Kurdyka-\L{}ojasiewicz inequality, the global convergence of different algorithms for various nonconvex problems is proved in a lot of literature. Motivated by those work, we propose a class of preconditioned first-order primal-dual algorithms for nonconvex structured optimization and its finite-sum problem, and establish the global convergence  and convergence rates of the proposed algorithms.
%
%In the future, there are several research directions. Firstly,
%since the almost sure global convergence of the stochastic preconditioning first-order primal-dual  algorithm depends on the properties of gradient estimators and \L{}ojasiewicz exponent, we will consider a better strategy of proof and stop relying on them too much.  Secondly, we can try constructing a differential inclusion of the proposed algorithms,  and prove any solution of differential inclusion  converges to a critical point of objective function. Finally, a more complex  structured nonconvex optimization problem can be considered.

In this paper, we delve into the exploration of the first-order primal-dual methods  for  composite optimization  and nonconvex finite-sum optimization in the fully nonconvex setting. Inspired by the existing first-order primal-dual methods for convex optimization, with the help of conjugate duality, we propose a preconditioned primal-dual gradient method and its stochastic approximate variant.
%The convergence results of the proposed algorithms are rigorously proved under the {\KL} inequality.
The proposed methods are shown to be effective on a variety of nonconvex applications.  
%least squares problem with SCAD regurizer, an $\ell_0$ gradient minimization problem, a one-hidden neural network for image classification and a nonconvex graph-guided fused $\ell_p$-norm. %Although only the problems with $\ell_0$ or $\ell_1$ regularizer have been tested, the problems with other regularizer such as $\ell_p$ ($0<p<1$), SCAD and MCP are also possible to be efficiently solved by our proposed methods.

Motivated by the rapid development of convergence analysis of various nonconvex optimization algorithms in recent years, we have derived the convergence results for the proposed algorithms  in the context of Kurdyka-\L{}ojasiewicz condition. Notably, the analysis of the stochastic algorithm for finite-sum optimization relies heavily  on  the properties of the variance reduced gradient estimators. Consequently,  it is not trivial to extend the technique in this paper to investigate the convergence of stochastic algorithms for general nonconvex stochastic optimization problems, which is left to future research.

%-----------------------------------------

\vspace{1cm}
\section*{Acknowledgement}
This work was partially supported by the National Key R\&D Program of China (No. 2022YFA1004000), the Major Key Project of PCL (No. PCL2022A05)  and the National Natural Science Foundation of China (Nos. 12271076 and 11271278).

%----------------------------------------------------------------
%    Bibliographies can be prepared with BibTeX using amsplain,
%    amsalpha, or (for "historical" overviews) natbib style.
\bibliographystyle{plain}
%    Insert the bibliography data here.
\bibliography{ref}

\begin{thebibliography}{10}

\bibitem{AB2009}
H{\'e}dy Attouch and J{\'e}r{\^o}me Bolte.
\newblock On the convergence of the proximal algorithm for nonsmooth functions
  involving analytic features.
\newblock {\em Mathematical Programming}, 116:5--16, 2009.

\bibitem{ABRS2010}
H{\'e}dy Attouch, J{\'e}r{\^o}me Bolte, Patrick Redont, and Antoine Soubeyran.
\newblock Proximal alternating minimization and projection methods for
  nonconvex problems: An approach based on the {K}urdyka-{{\L}}ojasiewicz
  inequality.
\newblock {\em Mathematical Programming}, 35(2):5--16, 2010.

\bibitem{ABS2013}
H{\'e}dy Attouch, J{\'e}r{\^o}me Bolte, and Benar~Fux Svaiter.
\newblock Convergence of descent methods for semi-algebraic and tame problems:
  proximal algorithms, forward-backward splitting, and regularized
  {G}auss-{S}eidel methods.
\newblock {\em Mathematical Programming}, 137:91--129, 2013.

\bibitem{Beck2017}
Amir Beck.
\newblock {\em First-Order Methods in Optimization}.
\newblock SIAM, Philadelphia, PA, 2017.

\bibitem{Bertsekas2015}
Dimitri~P. Bertsekas.
\newblock {\em Convex optimization algorithms}.
\newblock Athena Scientific, Belmont, MA, 2015.

\bibitem{BLZ2021}
Fengmiao Bian, Jingwei Liang, and Xiaoqun Zhang.
\newblock A stochastic alternating direction method of multipliers for
  non-smooth and non-convex optimization.
\newblock {\em Inverse Problems}, 37(7):075009, 2021.

\bibitem{BST2014}
J{\'e}r{\^o}me Bolte, Shoham Sabach, and Marc Teboulle.
\newblock Proximal alternating linearized minimization for nonconvex and
  nonsmooth problems.
\newblock {\em Mathematical Programming}, 146:459--494, 2014.

\bibitem{BST2018}
J{\'e}r{\^o}me Bolte, Shoham Sabach, and Marc Teboulle.
\newblock Nonconvex {L}agrangian-based optimization: Monitoring schemes and
  global convergence.
\newblock {\em Mathematics of Operations Research}, 43:1210--1232, 2018.

\bibitem{BLPPR2017}
S.~Bonettini, I.~Loris, F.~Porta, M.~Prato, and S.~Rebegoldi.
\newblock On the convergence of a linesearch based proximal-gradient method for
  nonconvex optimization.
\newblock {\em Inverse Problems}, 33(5):055005, 2017.

\bibitem{BPR2020}
S.~Bonettini, M.~Prato, and S.~Rebegoldi.
\newblock Convergence of inexact forward-backward algorithms using the
  forward-backward envelope.
\newblock {\em SIAM Journal on Optimization}, 30(4):3069--3097, 2020.

\bibitem{BF2000}
J.~F. Bonnans and A.~Shapiro.
\newblock {\em Perturbation Analysis of Optimization Problems}.
\newblock Springer-Verlag, New York, 2000.

\bibitem{BCN2019}
Radu~Ioan Bo{\c{t}}, Ern{\"o}~Robert Csetnek, and Dang-Khoa Nguyen.
\newblock A proximal minimization algorithm for structured nonconvex and
  nonsmooth problems.
\newblock {\em SIAM Journal on Optimization}, 29(2):1300--1328, 2019.

\bibitem{BN2020}
Radu~Ioan Bo{\c{t}} and Dang-Khoa Nguyen.
\newblock The proximal alternating direction method of multipliers in the
  nonconvex setting: convergence analysis and rates.
\newblock {\em Mathematics of Operations Research}, 45:682--712, 2020.

\bibitem{BCN2018}
L{\'e}on Bottou, Frank~E. Curtis, and Jorge Nocedal.
\newblock Optimization methods for large-scale machine learning.
\newblock {\em SIAM Review}, 60(2):223--311, 2018.

\bibitem{BNO2016}
Richard~H. Byrd, Jorge Nocedal, and Figen Oztoprak.
\newblock An inexact successive quadratic approximation method for {$\ell_1$}
  regularized optimization.
\newblock {\em Mathematical Programming}, 157(2, Ser. B):375--396, 2016.

\bibitem{CP2011}
Antonin Chambolle and Thomas Pock.
\newblock A first-order primal-dual algorithm for convex problems with
  applications to imaging.
\newblock {\em Journal of Mathematical Imaging and Vision}, 40(1):120--145,
  2011.

\bibitem{CP2016}
Antonin Chambolle and Thomas Pock.
\newblock An introduction to continuous optimization for imaging.
\newblock {\em Acta Numerica}, 25:161--319, 2016.

\bibitem{CWC2022}
Wanyou Cheng, Xiao Wang, and Xiaojun Chen.
\newblock An interior stochastic gradient method for a class of non-{L}ipschitz
  optimization problems.
\newblock {\em Journal of Scientific Computing}, 92:42, 2022.

\bibitem{Davis2016}
Damek Davis.
\newblock The asynchronous {PALM} algorithm for nonsmooth nonconvex problems,
  2016.
\newblock arXiv: 1604.00526.

\bibitem{DDKL2020}
Damek Davis, Dmitriy Drusvyatskiy, Sham Kakade, and Jason~D. Lee.
\newblock Stochastic subgradient method converges on tame functions.
\newblock {\em Foundations of Computational Mathematics}, 20(1):119--154, 2020.

\bibitem{DBL2014}
Aaron Defazio, Francis Bach, and Simon Lacoste-Julien.
\newblock {SAGA}: A fast incremental gradient method with support for
  non-strongly convex composite objectives.
\newblock {\em Advances in neural information processing systems},
  2:1646--1654, 2014.

\bibitem{DTLDS2021}
Derek Driggs, Junqi Tang, Jingwei Liang, Mike Davies, and Carola-Bibiane
  Sch\"{o}nlieb.
\newblock A stochastic proximal alternating minimization for nonsmooth and
  nonconvex optimization.
\newblock {\em SIAM Journal on Imaging Sciences}, 14(4):1932--1970, 2021.

\bibitem{FLLZ2018}
Cong Fang, Chris~Junchi Li, Zhouchen Lin, and Tong Zhang.
\newblock {SPIDER}: Near-optimal non-convex optimization via stochastic
  path-integrated differential estimator.
\newblock In {\em Advances in Neural Information Processing Systems},
  volume~31, 2018.

\bibitem{FM2022}
Gersende Fort and Eric Moulines.
\newblock Stochastic variable metric proximal gradient with variance reduction
  for non-convex composite optimization.
\newblock {\em Statistics and Computing}, 33(3):65, 2023.

\bibitem{GM2018}
Jonas Geiping and Michael Moeller.
\newblock Composite optimization by nonconvex majorization-minimization.
\newblock {\em SIAM Journal on Imaging Sciences}, 11(4):2494--2528, 2018.

\bibitem{GG2004}
Isabelle Guyon, Steve Gunn, Asa Ben-Hur, and Gideon Dror.
\newblock Result analysis of the {NIPS} 2003 feature selection challenge.
\newblock In {\em Advances in Neural Information Processing Systems}, pages
  545--552. 2004.

\bibitem{HTF2009}
Trevor Hastie, Robert Tibshirani, and Jerome Friedman.
\newblock {\em The elements of statistical learning}.
\newblock Springer Series in Statistics. Springer, New York, second edition,
  2009.

\bibitem{HS2013}
Lei He and Scott Schaefer.
\newblock Mesh denoising via {$L_0$} minimization.
\newblock {\em ACM Transactions on Graphics}, 32(4):1--8, 2013.

\bibitem{Frie2008}
Friedman J, Hastie T, and Tibshirani R.
\newblock Sparse inverse covariance estimation with the graphical lasso.
\newblock {\em Biostatistics}, 9(3):432--441, 2008.

\bibitem{JW2022}
Lingzi Jin and Xiao Wang.
\newblock A stochastic primal-dual method for a class of nonconvex constrained
  optimization.
\newblock {\em Computational Optimization and Applications}, 83(1):143--180,
  2022.

\bibitem{JOhnson2013}
Rie Johnson and Tong Zhang.
\newblock Accelerating stochastic gradient descent using predictive variance
  reduction.
\newblock In {\em Advances in Neural Information Processing Systems},
  volume~26, pages 315--323, 2013.

\bibitem{Lan2020}
Guanghui Lan.
\newblock {\em First-order and stochastic optimization methods for machine
  learning}.
\newblock Springer Series in the Data Sciences. Springer, Cham, 2020.

\bibitem{LTP2022}
Puya Latafat, Andreas Themelis, and Panagiotis Patrinos.
\newblock Block-coordinate and incremental aggregated proximal gradient methods
  for nonsmooth nonconvex problems.
\newblock {\em Mathematical Programming}, 193(1, Ser. A):195--224, 2022.

\bibitem{Li2015}
Guoyin Li and Ting~Kei Pong.
\newblock Global convergence of splitting methods for nonconvex composite
  optimization.
\newblock {\em SIAM Journal on Optimization}, 25(4):2434--2460, 2015.

\bibitem{LMQ2021}
Xiao Li, Andre Milzarek, and Junwen Qiu.
\newblock Convergence of random reshuffling under the
  {K}urdyka-{{\L}}ojasiewicz inequality.
\newblock {\em SIAM Journal on Optimization}, 33(2):1092--1120, 2023.

\bibitem{LL2018}
Zhize Li and Jian Li.
\newblock A simple proximal stochastic gradient method for nonsmooth nonconvex
  optimization.
\newblock In {\em Advances in Neural Information Processing Systems},
  volume~31, 2018.

\bibitem{LXY2021}
Yanli Liu, Yunbei Xu, and Wotao Yin.
\newblock Acceleration of primal-dual methods by preconditioning and simple
  subproblem procedures.
\newblock {\em Journal of Scientific Computing}, 86(21):1--34, 2021.

\bibitem{MXCWU2019}
Andre Milzarek, Xiantao Xiao, Shicong Cen, Zaiwen Wen, and Michael Ulbrich.
\newblock A stochastic semismooth {N}ewton method for nonsmooth nonconvex
  optimization.
\newblock {\em SIAM Journal on Optimization}, 29(4):2916--2948, 2019.

\bibitem{MXWU2022}
Andre Milzarek, Xiantao Xiao, Zaiwen Wen, and Michael Ulbrich.
\newblock On the local convergence of a stochastic semismooth {N}ewton method
  for nonsmooth nonconvex optimization.
\newblock {\em Science China Mathematics}, 65:2151–2170, 2022.

\bibitem{Thomas2015}
Thomas M\"{o}llenhoff, Evgeny Strekalovskiy, Michael Moeller, and Daniel
  Cremers.
\newblock The primal-dual hybrid gradient method for semiconvex splittings.
\newblock {\em SIAM Journal on Imaging Sciences}, 8(2):827--857, 2015.

\bibitem{Nesterov2013}
Yurii Nesterov.
\newblock Gradient methods for minimizing composite functions.
\newblock {\em Mathematical Programming}, 140(1, Ser. B):125--161, 2013.

\bibitem{NLST2017}
Lam~M. Nguyen, Jie Liu, Katya Scheinberg, and Martin Tak{\'a}{\v{c}}.
\newblock {SARAH}: A novel method for machine learning problems using
  stochastic recursive gradient.
\newblock In {\em Proceedings of the 34th International Conference on Machine
  Learning}, pages 2613--2621, 2017.

\bibitem{PNPT2020}
Nhan~H. Pham, Lam~M. Nguyen, Dzung~T. Phan, and Quoc Tran-Dinh.
\newblock Proxsarah: An efficient algorithmic framework for stochastic
  composite nonconvex optimization.
\newblock {\em Journal of Machine Learning Research}, 21(110):1--48, 2020.

\bibitem{PC2011}
Thomas Pock and Antonin Chambolle.
\newblock Diagonal preconditioning for first order primal-dual algorithms in
  convex optimization.
\newblock In {\em 2011 International Conference on Computer Vision}, pages
  1762--1769, 2011.

\bibitem{Rockafellar1970}
R.~Tyrrell Rockafellar.
\newblock {\em Convex analysis}.
\newblock Princeton University Press, Princeton, N.J., 1970.

\bibitem{RW1998}
R.~Tyrrell Rockafellar and Roger J.-B. Wets.
\newblock {\em Variational analysis}, volume 317.
\newblock Springer-Verlag, Berlin, 1998.

\bibitem{SLB2017}
Mark Schmidt, Nicolas Le~Roux, and Francis Bach.
\newblock Minimizing finite sums with the stochastic average gradient.
\newblock {\em Mathematical Programming}, 162(1-2, Ser. A):83--112, 2017.

\bibitem{TSP2018}
Andreas Themelis, Lorenzo Stella, and Panagiotis Patrinos.
\newblock Forward-backward envelope for the sum of two nonconvex functions:
  further properties and nonmonotone linesearch algorithms.
\newblock {\em SIAM Journal on Optimization}, 28(3):2274--2303, 2018.

\bibitem{TBLS2021}
Andreas~M. Tillmann, Daniel Bienstock, Andrea Lodi, and Alexandra Schwartz.
\newblock Cardinality minimization, constraints, and regularization: A survey,
  2021.
\newblock arXiv: 2106.09606.

\bibitem{TYPC2020}
Weihua Tong, Xiankang Yang, Maodong Pan, and Falai Chen.
\newblock Spectral mesh segmentation via $\ell _0$ gradient minimization.
\newblock {\em IEEE Transactions on Visualization and Computer Graphics},
  26(4):1807--1820, 2020.

\bibitem{WWY2019}
Xiaoyu Wang, Xiao Wang, and Ya-xiang Yuan.
\newblock Stochastic proximal quasi-{N}ewton methods for non-convex composite
  optimization.
\newblock {\em Optimization Methods and Software}, 34(5):922--948, 2019.

\bibitem{WCLQ2018}
Fei Wen, Lei Chu, Peilin Liu, and Robert~C. Qiu.
\newblock A survey on nonconvex regularization-based sparse and low-rank
  recovery in signal processing, statistics, and machine learning.
\newblock {\em IEEE Access}, 6:69883--69906, 2018.

\bibitem{XLXJ2011}
Li~Xu, Cewu Lu, Yi~Xu, and Jiaya Jia.
\newblock Image smoothing via {$L_0$} gradient minimization.
\newblock {\em ACM Transactions on Graphics}, 30(6):1–12, 2011.

\bibitem{XJY2019}
Yi~Xu, Rong Jin, and Tianbao Yang.
\newblock Non-asymptotic analysis of stochastic methods for non-smooth
  non-convex regularized problems.
\newblock In {\em Advances in Neural Information Processing Systems},
  volume~32, 2019.

\end{thebibliography}

%-------------------------------------------------------

%-------------------------------------------------------

\appendix
\section{Appendix}	

\subsection{Examples of proximal mappings of conjugate functions}\label{Appendix-prox}
We will now list a set of examples on the proximal mappings of  conjugate functions associated with some well-known regularizers. All of them are obtained by direct calculations based on the definitions of proximal mapping and conjugate function.

%------------------------------------------------------

\begin{example}[$\ell_1$ norm]\label{ex:l1}
For a constant $\lambda>0$, let $h(x)=\lambda \|x\|_1$. The conjugate function of $h$ is given by
\[h^*(x)=\lambda\cI_{\cC}(x/\lambda), \text{ with } \cC=\{x:\|x\|_{\infty}\leq 1\},\]
where $\cI_{\cC}(\cdot)$ is the indicator function.
Then, the proximal mapping of $h^*$ is
\[
\prox_{\beta h^*}(y)= \lambda\Pi_{\cC}(y/\lambda).
\]

\end{example}

%------------------------------------------------------

\begin{example}[$\ell_0$ norm]\label{ex:l0}
Consider the function
\[
h(x)=\lambda\|x\|_0+\cI_{\cD}(x),
\]
where $\cD:=\{x\in\R^n:c_1\leq x_{i}\leq c_2\}$   is to guarantee that each element of $x$ is neither too large nor too small,  and $c_1<0<c_2$ are constants.

By performing a simple calculation, we obtain
 the conjugate function of $h$  as follows,
\[h^*(x)=\sum_{i=1}^{n}h^*(x_i),\]
where
\[
h^*(x_i)=\left\{
\begin{array}{lll}
	c_2x_i-\lambda, & \text{ if }  x_i>\frac{\lambda}{c_2},\\
	\\
	0, & \text{ if } \frac{\lambda}{c_1}<x_i\leq \frac{\lambda}{c_2},\\
	\\
	c_1x_i-\lambda, & \text{ if } x_i\leq \frac{\lambda}{c_1},
\end{array}
\right.
\]
and  its proximal mapping is computed by
\bee
[\prox_{\beta h^*}(y)]_i=\left\{
\begin{array}{lll}
	y_i-c_2\beta, & \text{ if }  y_i>c_2\beta+\frac{\lambda}{c_2},\\
	\\
	\frac{\lambda}{c_2}, & \text{ if } \frac{\lambda}{c_2}<y_i\leq c_2\beta+\frac{\lambda}{c_2},\\
	\\
	y_i, & \text{ if } \frac{\lambda}{c_1}<y_i\leq \frac{\lambda}{c_2},\\
	\\
	\frac{\lambda}{c_1}, & \text{ if } c_1\beta+\frac{\lambda}{c_1}<y_i\leq \frac{\lambda}{c_1},\\
	\\
	y_i-c_1\beta, & \text{ if } y_i\leq c_1\beta+\frac{\lambda}{c_1}.
\end{array}
\right.
\eee	
\end{example}

%----------------------------------------------
\begin{example}[$\ell_p$ norm]\label{ex:lp}
Let $\|x\|_p$ be the $\ell_p$ norm of $x$ for $p\in(0,1)$.
Consider
\[
h(x)=\lambda\|x\|_p^p+\cI_{\cD}(x),
\]
where $\cD=\{x\in\R^n:\|x\|_{\infty}\leq r\}$ for some constant $r>0$.
The conjugate function of $h$ is
\[
h^*(y)=\sum_{i=1}^n\max\{r|y_i|-\lambda r^p,0\}.
\]
Furthermore, the proximal mapping of $h^*$ is calculated by
\[
[\prox_{\beta h^*}(y)]_i=\cT(y_i,r,\lambda,\beta),\ i=1,\ldots,n,
\]
where
\bee
\cT(y_i,r,\lambda,\beta)=\left\{
\begin{array}{lll}
	y_i, & \text{ if } |y_i|< \lambda r^{p-1},\\
	\\
	\sign(y_i) \lambda r^{p-1}, & \text{ if } \lambda r^{p-1}\leq|y_i|\leq \lambda r^{p-1}+r\beta,\\
	\\
	y_i-\sign(y_i)r\beta, & \text{ if } |y_i|>\lambda r^{p-1}+r\beta.
\end{array}
\right.
\eee	
\end{example}

%------------------------------------------------
\begin{example}[SCAD]\label{ex:SCAD}
The SCAD regularizer is defined as follows,
\bee
p_{\lambda,\gamma}(|w|)=\left\{
\begin{array}{lll}
	\lambda|w|, & \text{ if } |w|\leq\lambda,\\
	\\
	\frac{2\gamma\lambda |w|-(w^2+\lambda^2)}{2(\gamma-1)}, & \text{ if } \lambda<|w|\leq\gamma\lambda,\\
	\\
	\frac{\lambda^2(\gamma+1)}{2}, & \text{ if } |w|>\gamma\lambda,
\end{array}
\right.
\eee
where $\lambda>0$ is a given penalty parameter and $\gamma>2$ is a tuning parameter.
Consider the following function
\[
h(x)=\sum_{j=1}^{n}p_{\lambda,\gamma}(|x_j|)+\cI_{\cD}(x),
\]
where $r>0$ is a constant and  $\cD:=\{x\in\R^n:-r\leq x_i\leq r\}$.

Through a straightforward calculation, we derive the conjugate function of $h$ as
\[h^*(x)=\sum_{i=1}^{n}h^*(x_i),\]
where $h^*(x_i)$  is given in three cases:
\begin{itemize}
	\item[(i)] when $r<\lambda$,
	\[
	h^*(x_i)=r\sign(x_i)\max\left\{|x_i|-\lambda,0\right\};
	\]
	\item[(ii)] when $\lambda\leq r<\gamma\lambda$,
	\[h^*(x_i)=
		r\max\left\{|x_i|+\frac{(r-\lambda)^2}{2r(\gamma-1)}-\lambda,0\right\};\]
	\item[(iii)] when $ r\geq\gamma\lambda$,
	\[	h^*(x_i)=
	\left\{
	\begin{array}{lll}
		rx_i-\frac{\lambda^2(\gamma+1)}{2}, & \text{ if }  x_i>\frac{\lambda^2(\gamma+1)}{2r},\\
		\\
		0, & \text{ if } -\frac{\lambda(\gamma+1)}{2\gamma}<x_i\leq\frac{\lambda^2(\gamma+1)}{2r},\\
		\\
		-\lambda \gamma x_i-\frac{\lambda^2(\gamma+1)}{2}, & \text{ if } x_i\leq -\frac{\lambda(\gamma+1)}{2\gamma}.
	\end{array}
	\right.
	\]
\end{itemize}
Then, corresponding to the above three cases, the proximal mapping of $h^*(x)$ is obtained by
 \[[\prox_{\beta h^*}(y)]_i=\cH(y_i,r,\lambda,\beta,\gamma), \ i=1,\ldots,n.\]
 Here, $\cH(y_i,r,\lambda,\beta,\gamma)$ is computed as follows:
 \begin{itemize}
 \item[(i)] when $r<\lambda$,	
  \bee
 \cH(y_i,r,\lambda,\beta,\gamma)=\left\{
 \begin{array}{lll}
 	y_i-r\beta, & \text{ if }  y_i>\lambda+r\beta,\\
 	\\
 	\lambda, & \text{ if } \lambda<y_i\leq\lambda+r\beta,\\
 	\\
 	y_i, & \text{ if } -\lambda+\frac{r\beta}{2}<y_i\leq \lambda,\\
 	\\
 	y_i-r\beta, &  \text{ if } y_i\leq -\lambda+\frac{r\beta}{2};
 \end{array}
 \right.
 \eee
 \item[(ii)] when $\lambda\leq r<\gamma\lambda$,
  \bee
 \cH(y_i,r,\lambda,\beta,\gamma)=\left\{
 \begin{array}{lll}
 	y_i-\sign(y_i)r\beta, & \text{ if }  |y_i|>\lambda+r\beta-\frac{(r-\lambda)^2}{2r(\gamma-1)},\\
 	\\
 	\sign(y_i)\left(\lambda-\frac{(r-\lambda)^2}{2r(\gamma-1)}\right), & \text{ if } \lambda-\frac{(r-\lambda)^2}{2r(\gamma-1)}<|y_i|\leq\lambda+r\beta-\frac{(r-\lambda)^2}{2r(\gamma-1)},\\
 	\\
 	y_i, & \text{ if } |y_i|\leq \lambda-\frac{(r-\lambda)^2}{2r(\gamma-1)};
 \end{array}
 \right.
 \eee
 \item[(iii)] when $ r\geq\gamma\lambda$,
 \bee
 \cH(y_i,r,\lambda,\beta,\gamma)=\left\{
 \begin{array}{lll}
 		y_i-r\beta, & \text{ if }  y_i>r\beta+\frac{\lambda^2(\gamma+1)}{2r},\\
 	\\
 	\frac{\lambda^2(\gamma+1)}{2r}, & \text{ if } \frac{\lambda^2(\gamma+1)}{2r}<y_i\leq r\beta+\frac{\lambda^2(\gamma+1)}{2r},\\
 	\\
 	y_i, & \text{ if } -\frac{\lambda(\gamma+1)}{2\gamma}<y_i\leq\frac{\lambda^2(\gamma+1)}{2r},\\
 	\\
 	-\frac{\lambda(\gamma+1)}{2\gamma}, & \text{ if } -\frac{\lambda(\gamma+1)}{2\gamma}-\gamma\lambda\beta<y_i\leq-\frac{\lambda(\gamma+1)}{2\gamma},\\
 	\\
 	y_i+\gamma\lambda\beta, & \text{ if } y_i\leq -\frac{\lambda(\gamma+1)}{2\gamma}-\gamma\lambda\beta.
 \end{array}
 \right.
 \eee
  \end{itemize}
\end{example}
%-------------------------------------------------

\end{document}